\newtheorem{theorem}{Theorem}[section]
\newtheorem{lemma}[theorem]{Lemma}
\newtheorem{proposition}[theorem]{Proposition}
\newtheorem{corollary}[theorem]{Corollary}
\newtheorem{claim}{Claim}
\newtheorem{fact}{Fact}
\newenvironment{factproof}[1]{\par\noindent\textit{Proof of the fact:}\space#1}{\hfill $\blacksquare$}
\newenvironment{claimproof}[1]{\par\noindent\textit{Proof of the claim:}\space#1}{\hfill $\blacksquare$}
\theoremstyle{definition}
\newtheorem{remark}[theorem]{Remark}
\newcommand{\suchthat}{\;\ifnum\currentgrouptype=16 \middle\fi|\;}
\newcommand{\leftexp}[2]{{\vphantom{#1}}^{#2}{#1}}
\newcommand{\closure}[2]{\leftexp{\overline{#2}}{#1}}
\newcommand{\bigslant}[2]{{\raisebox{.2em}{$#1$}\left/\raisebox{-.2em}{$#2$}\right.}}
\DeclareMathOperator{\Aut}{\mathrm{Aut}}
\DeclareMathOperator{\Z}{\mathbf{Z}}
\DeclareMathOperator{\R}{\mathbf{R}}
\DeclareMathOperator{\N}{\mathbf{Z}_{\geq 0}}
\DeclareMathOperator{\id}{\mathrm{id}}
\DeclareMathOperator{\Sym}{\mathrm{Sym}}
\DeclareMathOperator{\Alt}{\mathrm{Alt}}
\DeclareMathOperator{\Ch}{\mathrm{Ch}}
\DeclareMathOperator{\Mon}{\mathrm{Mon}}
\DeclareMathOperator{\Sub}{\mathbf{Sub}}
\DeclareMathOperator{\bd}{\partial\!}
\title{Chabauty limits of simple groups acting on trees\footnotetext{2010 Mathematics Subject Classification: 20E08, 20E32, 20E42, 20F65, 22D05.
}}
\author[1]{Pierre-Emmanuel Caprace\thanks{F.R.S.-FNRS Senior Research Associate, supported in part by the ERC (grant \#278469).}}
\author[1]{Nicolas Radu\thanks{F.R.S.-FNRS Research Fellow.}}
\affil[1]{UCLouvain, 1348 Louvain-la-Neuve, Belgium}
\date{July 4, 2018}
\begin{document}

\maketitle

\begin{abstract}
Let $T$ be a locally finite tree without vertices of degree~$1$. We show that among the closed subgroups of $\Aut(T)$ acting with a bounded number of orbits, the Chabauty-closure of the set of topologically simple groups is the set of groups without proper open subgroup of finite index. Moreover, if all vertices of $T$ have degree $\geq 3$, then the set of isomorphism classes of topologically simple closed subgroups of $\Aut(T)$ acting doubly transitively on $\bd T$ carries a natural compact Hausdorff topology inherited from Chabauty. Some of our considerations are valid in the context of automorphism groups of locally finite connected graphs. Applications to Weyl-transitive automorphism groups of buildings are also presented. 
\end{abstract}

{\footnotesize
\tableofcontents
}

\section{Introduction}
\begin{flushright}
\begin{minipage}[t]{0.35\linewidth}\itshape\small
H\^etre c'est mon identit\'e

\hfill\upshape (Jacques Pr\'evert, \emph{Arbres}, 1976)
\end{minipage}
\end{flushright}

Beyond algebraic groups over local fields, groups acting on trees provide the largest (and historically the first) known source of examples of non-discrete compactly generated locally compact groups that are \textbf{topologically simple}, i.e.\ whose only closed normal subgroups are the trivial ones. Since the automorphism group of a given locally finite tree $T$ may host many pairwise non-isomorphic topologically simple closed subgroups, it is natural to consider those collectively, by viewing them as a subset of the space $\Sub(\Aut(T))$ of all closed subgroups of $\Aut(T)$, endowed with the Chabauty topology, which is compact. The starting point of this work is the following basic question: \textit{what is the Chabauty-closure of the set of topologically simple closed subgroups of $\Aut(T)$?} In order to stay in the realm of compactly generated groups, we will frequently impose that the groups under consideration act with a bounded number of orbits. Assuming the weaker condition that the groups act cocompactly on $T$ is sufficient to guarantee that they are compactly generated, but that condition is not Chabauty-closed. To facilitate the statements of our results, we introduce the following notation. For a given number $C>0$, we denote by 
$$\Sub(\Aut(T))_{\leq C}$$
the set of closed subgroups of $\Aut(T)$ acting with at most $C$ orbits of vertices. It is a clopen subset of $\Sub(\Aut(T))$ (see Proposition~\ref{proposition:CLOPEN} (3)).

\begin{theorem}\label{theorem:LimitsOfSimple}
Let $T$ be a locally finite tree all of whose vertices have degree~$\geq 2$. For any $C > 0$, the Chabauty-closure of the set of topologically simple groups in $\Sub(\Aut(T))_{\leq C}$ is the set of groups in $\Sub(\Aut(T))_{\leq C}$ without proper open subgroup of finite index.
\end{theorem}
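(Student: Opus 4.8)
The plan is to prove the two inclusions separately. Write $\mathcal{N} \subseteq \Sub(\Aut(T))_{\leq C}$ for the set of groups without proper open subgroup of finite index and $\mathcal{S}$ for the set of topologically simple ones; I must show $\overline{\mathcal{S}} = \mathcal{N}$. For $\overline{\mathcal{S}} \subseteq \mathcal{N}$ I would first check $\mathcal{S} \subseteq \mathcal{N}$: if a topologically simple $G$ had a proper open subgroup $U$ of finite index, its normal core $N = \bigcap_{g \in G} gUg^{-1}$ would be a proper open normal subgroup of finite index; since $G$ acts cocompactly on the infinite tree $T$ it is non-discrete, so $N$ is a nontrivial proper closed normal subgroup, contradicting simplicity. (A discrete cocompact subgroup of $\Aut(T)$ is virtually free, and such a group is abstractly simple only if finite, which cannot act cocompactly on an infinite tree; hence every member of $\mathcal{S}$ is non-discrete.) It then remains to show that $\mathcal{N}$ is Chabauty-closed in $\Sub(\Aut(T))_{\leq C}$. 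Here I would exploit local finiteness together with the orbit bound: an open subgroup contains the pointwise fixator $G_{B(v,r)}$ of some ball, the quotient $G/G_{B(v,r)}$ is finite, and the bound on the number of orbits makes both the radius $r$ and the admissible index uniform across the (compact) space. Consequently the existence of a proper open finite-index subgroup is detected by the finite local action on a bounded ball, a datum that is locally constant in the Chabauty topology; thus the complement of $\mathcal{N}$ is open and $\mathcal{N}$ is closed.

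For the reverse inclusion $\mathcal{N} \subseteq \overline{\mathcal{S}}$, fix $G \in \mathcal{N}$ and produce topologically simple groups converging to it. First reduce to the case where $G$ acts minimally and fixes no end of $T$: a cocompact action fixing an end yields a nontrivial continuous homomorphism to $\Z$, hence a proper open finite-index subgroup, so membership in $\mathcal{N}$ rules this out; non-minimality on $T$ is handled by passing to the minimal subtree or treated as a separate, symmetric case. Now for each $k$ define the $k$-closure $G^{(k)}$ to consist of all $g \in \Aut(T)$ whose restriction to every ball of radius $k$ agrees with the restriction of some element of $G$. Each $G^{(k)}$ is closed, contains $G$, satisfies $G^{(0)} \supseteq G^{(1)} \supseteq \cdots$ with $\bigcap_k G^{(k)} = G$ (as $G$ is closed), and lies in $\Sub(\Aut(T))_{\leq C}$ since it has no more orbits than $G$. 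A decreasing sequence of closed subgroups converges in Chabauty to its intersection, so $G^{(k)} \to G$. Since membership in $G^{(k)}$ is a purely local condition, automorphisms can be glued across edges, giving $G^{(k)}$ Tits' independence property~(P); and because $G^{(k)} \supseteq G$ still acts minimally without fixing an end (and, after passing to the barycentric subdivision if necessary, without inversion), Tits' simplicity theorem shows that the subgroup $(G^{(k)})^{+}$ generated by the fixators of edges is trivial or topologically simple.

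It remains to upgrade $G^{(k)} \to G$ to $(G^{(k)})^{+} \to G$ with the limits simple. The decisive quantity is the index $[\,G^{(k)} : (G^{(k)})^{+}\,]$. Once this is finite, $G \cap (G^{(k)})^{+}$ is an open subgroup of $G$ of finite index, hence equals $G$ because $G \in \mathcal{N}$; thus $G \subseteq (G^{(k)})^{+} \subseteq G^{(k)}$, and squeezing between $G$ and $G^{(k)} \to G$ forces $(G^{(k)})^{+} \to G$. Moreover $(G^{(k)})^{+} \supseteq G \neq \{1\}$ is then non-trivial, so it is topologically simple by Tits and lies in $\Sub(\Aut(T))_{\leq C}$, which places $G$ in $\overline{\mathcal{S}}$. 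I expect the main obstacle to lie exactly here: establishing that $(G^{(k)})^{+}$ has finite index in $G^{(k)}$, equivalently that for a closed cocompact group acting minimally on $T$ without fixed end or inversion the canonical subgroup $G^{+}$ is cocompact of finite index. This is the technical heart of the argument, and it is where the hypotheses are genuinely used (cocompactness via the orbit bound and the absence of a fixed end extracted from $\mathcal{N}$); by comparison the closedness of $\mathcal{N}$ in the forward direction, while requiring the uniformity argument above, is the more routine half.
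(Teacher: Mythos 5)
Your overall architecture for the density half coincides with the paper's, but there are two genuine gaps, and they sit exactly where you estimated the difficulty backwards. First, the closedness of $\mathcal N$, which you call ``the more routine half,'' is in fact the technical heart of the paper, and your argument for it does not work: the existence of a proper open finite-index subgroup is \emph{not} detected by the local action on a bounded ball, nor is it locally constant in the Chabauty topology. An open subgroup $P \leq_{\text{ofi}} H$ contains some ball fixator $H_v^{[r]}$, but $r$ depends on $P$ and $H$, and there is no a priori uniform bound over $\Sub(\Aut(T))_{\leq C}$; more fundamentally, whether local data close up into a finite-index subgroup is a global question (Burger--Mozes-type constructions produce groups with identical local actions but entirely different lattices of open normal subgroups, cf.\ Remark~\ref{rem:BuMo}). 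The paper's actual route is: $H_n = H_n^{(\infty)}$ forces unimodularity (the modular character has finitely generated, hence residually finite, image in $\R$), and unimodularity is Chabauty-closed (Theorem~\ref{theorem:Unimod}); Bass--Kulkarni then gives a free uniform lattice $\Gamma \leq H$ (Proposition~\ref{proposition:existence}), which is conjugated into all nearby $H_n$ via the rigidity of edge-indexed quotient graphs (Proposition~\ref{proposition:CLOPEN}, Lemma~\ref{lem:Lifting}, Proposition~\ref{proposition:conjugate}); finally Proposition~\ref{proposition:convergence_k} (eventual containment $H_n \leq \closure{k}{H}$ up to small conjugation), Lemma~\ref{lemma:ofi} ($[\closure{k}{H} : \closure{k}{P}] \leq [H:P]$ for $P \leq_{\text{ofi}} H$) and Lemma~\ref{lemma:index} (semicontinuity of indices along converging sequences) combine to give $[H : H^{(\infty)}] \leq \limsup_n [H_n : H_n^{(\infty)}]$ (Theorem~\ref{theorem:simpleclosed}, Corollary~\ref{corollary:tree}), whence Proposition~\ref{proposition:BoundedCovol}. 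None of this machinery can be bypassed by a uniformity-on-balls argument.

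Second, in the density direction your ``decisive quantity'' $[\closure{k}{G} : (\closure{k}{G})^{+}]$ is the wrong one, in two respects. (a) The $k$-closure satisfies the independence property $P_k$, not Tits's property (P): gluing is only possible at distance $k$, so Tits's simplicity theorem does not apply to the edge-fixator subgroup $(\closure{k}{G})^{+}$; one needs the Banks--Elder--Willis theorem for the subgroup $(\closure{k}{G})^{+_k}$ generated by pointwise stabilizers of $(k-1)$-balls around edges. (b) More seriously, the index you propose to prove finite is in general \emph{infinite}, so the step you flag as the technical heart would stall at a false statement. What is true (Proposition~\ref{proposition:BEW}) is that $\closure{k}{G}/(\closure{k}{G})^{+_k}$ acts properly and cocompactly on the tree $(\closure{k}{G})^{+_k} \backslash T$ (a tree because $(\closure{k}{G})^{+_k}$ is generated by vertex stabilizers), hence is virtually free --- possibly infinite. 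The correct replacement for finiteness is residual finiteness: virtually free groups are residually finite, and since $G = G^{(\infty)}$ has no nontrivial finite discrete quotient, the image of $G$ in this quotient is trivial, giving $G \leq (\closure{k}{G})^{+_k}$ outright (Proposition~\ref{proposition:DenseSimple}); your squeeze $G \leq (\closure{k}{G})^{+_k} \leq \closure{k}{G} \to G$ then goes through verbatim. Two smaller remarks: your reduction to the minimal case is unnecessary --- cocompactness together with all degrees $\geq 2$ already excludes proper invariant subtrees, and the absence of a fixed end follows from the Busemann character as you say (Lemma~\ref{lemma:invariant-subtree}); this is precisely where the degree $\geq 2$ hypothesis enters, and it is not removable (Lemma~\ref{lemma:Valency1}). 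Also note the paper proves the stronger statement that the approximating groups are \emph{abstractly} simple (Theorem~\ref{theorem:LimitsOfSimple2}), which comes for free from the Banks--Elder--Willis theorem.
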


The conclusion of Theorem~\ref{theorem:LimitsOfSimple} may fail if the tree $T$ is allowed to have vertices of degree~$1$, see Lemma~\ref{lemma:Valency1} below. 

\medskip

Following Burger--Mozes \cite{BM}, it is customary to denote the intersection of all open subgroups of finite index in a given locally compact group $H$ by $H^{(\infty)}$. We also denote by $\Mon(H)$ the \textbf{monolith} of $H$, i.e.\ the (possibly trivial) intersection of all non-trivial closed normal subgroups of $H$. Notice that $H$ is topologically simple if and only if $H = \Mon(H)$. With these notations at hand, the statement of Theorem~\ref{theorem:LimitsOfSimple} can be epitomized by the following equality:
$$\overline{\{H \in \Sub(\Aut(T))_{\leq C} \mid H = \Mon(H)\} } = \{H \in \Sub(\Aut(T))_{\leq C} \mid H = H^{(\infty)}\}.$$

We remark that if $C = 1$ then the set $\{H \in \Sub(\Aut(T))_{\leq C} \mid H = H^{(\infty)}\}$ is empty, while if $C \geq 2$ and $T$ is \textbf{semi-regular} (i.e.\ $\Aut(T)$ is edge-transitive), that set contains at least one group, namely the group $\Aut(T)^+$ of type-preserving automorphisms, which is simple by \cite{Tits_arbre}. For a general tree $T$ and an arbitrarily large $C$, it may be the case that $\Sub(\Aut(T))_{\leq C}$ contains only discrete, hence virtually free, groups (see \cite{BassTits}), so that the set $\{H \in \Sub(\Aut(T))_{\leq C} \mid H = H^{(\infty)}\}$ is also empty in that case.

It is important to note that a Chabauty limit of topologically simple groups need not be simple. Indeed, explicit examples of non-simple closed subgroups $H$ of $\Aut(T)$ that are edge-transitive (indeed locally $2$-transitive) and satisfy $H = H^{(\infty)}$ are provided by Burger and Mozes in \cite{BM}*{Example 1.2.1} (see also Remark~\ref{rem:BuMo} below). Thus the set of topologically simple edge-transitive closed subgroups  is not closed in $\Sub(\Aut(T))$. Nevertheless, that situation changes if one considers the subset of groups acting doubly transitively on the set of ends of a \textit{thick} tree $T$ (which is automatically contained in $\Sub(\Aut(T))_{\leq 2}$, see \cite{BM}*{Lemma~3.1.1}). Recall that $T$ is \textbf{thick} if all its vertices have degree $\geq 3$, and remark that $\Sub(\Aut(T))_{\leq 2}$ is non-empty only when $T$ is semi-regular.

\begin{theorem}\label{theorem:Bd-2-trans}
Let $T$ be a locally finite thick semi-regular tree. The set of topologically simple closed subgroups of $\Aut(T)$ acting $2$-transitively on $\bd T$ is Chabauty-closed. 

Moreover, the isomorphism relation within that set has closed classes, and  the set $\mathcal{S}_T$ of isomorphism classes of topologically simple groups acting continuously and properly on $T$ and $2$-transitively on $\bd T$, endowed with the quotient topology, is compact Hausdorff.
\end{theorem}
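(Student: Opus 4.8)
The plan is to prove the first assertion (Chabauty-closedness) by combining Theorem~\ref{theorem:LimitsOfSimple} with a direct verification that boundary-$2$-transitivity survives Chabauty limits, and to deduce the topological statement from a rigidity principle together with two compactness–normalization arguments. So let $H$ be a Chabauty limit of topologically simple groups $H_n \le \Aut(T)$, each acting $2$-transitively on $\bd T$. By \cite{BM}*{Lemma~3.1.1} each $H_n$ lies in $\Sub(\Aut(T))_{\leq 2}$, which is clopen (Proposition~\ref{proposition:CLOPEN}~(3)), so $H \in \Sub(\Aut(T))_{\leq 2}$ as well. Since the $H_n$ are topologically simple, Theorem~\ref{theorem:LimitsOfSimple} yields $H = H^{(\infty)}$. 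It therefore remains to show that $H$ is again $2$-transitive on $\bd T$ and that, for such a group, the equality $H = H^{(\infty)}$ forces topological simplicity.

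First I would show that $2$-transitivity on $\bd T$ passes to the limit. Fix pairs of distinct ends $(\xi_1,\eta_1)$ and $(\xi_2,\eta_2)$, with associated bi-infinite geodesic lines $L_1, L_2$ and base vertices $x_i \in L_i$. For each $n$ choose $g_n \in H_n$ with $g_n(\xi_1,\eta_1) = (\xi_2,\eta_2)$; then $g_n L_1 = L_2$, but the $g_n$ may a priori escape to infinity. To cure this I would use that a boundary-$2$-transitive group contains, along every line, hyperbolic elements of uniformly bounded translation length: since $H_n$ acts with at most two orbits on vertices it contains a hyperbolic element of translation length bounded by a universal constant, and by $2$-transitivity its axis can be mapped onto $L_2$, producing $t_n \in H_n$ hyperbolic with axis $L_2$ and bounded translation length. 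Replacing $g_n$ by a suitable power-translate $t_n^{k_n} g_n \in H_n$ (which still sends $(\xi_1,\eta_1)$ to $(\xi_2,\eta_2)$, as $t_n$ fixes both ends of $L_2$), I may assume $d(x_2, g_n x_1)$ stays bounded; the resulting sequence is then precompact in $\Aut(T)$, and any accumulation point $g$ lies in $H$ by definition of the Chabauty topology and satisfies $g(\xi_1,\eta_1) = (\xi_2,\eta_2)$. Hence $H$ is $2$-transitive on $\bd T$.

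Next I would invoke (and, where not already available, establish) the structural principle that a closed subgroup of $\Aut(T)$ acting $2$-transitively on $\bd T$ and satisfying $H = H^{(\infty)}$ is topologically simple. The mechanism is that any non-trivial closed normal subgroup $N \trianglelefteq H$ acts transitively on $\bd T$ — a $2$-transitive action is primitive, so the $N$-orbits form a trivial block system, and $N \ne 1$ cannot fix $\bd T$ pointwise since that would force $N$ to fix $T$ — and is therefore cocompact; one then identifies the intersection of all such $N$, the monolith $\Mon(H)$, with $H^{(\infty)}$, so that $H = H^{(\infty)} = \Mon(H)$ is topologically simple. This completes the proof that the set in the first assertion is Chabauty-closed; as a closed subset of the compact space $\Sub(\Aut(T))$, call it $\mathcal X$, it is compact Hausdorff.

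For the second assertion, write $\mathcal S_T = \mathcal X / {\cong}$, where $\cong$ denotes topological isomorphism; since a topologically simple group admitting a continuous proper $2$-transitive action on $\bd T$ has trivial kernel and closed image, every such abstract group is represented by a member of $\mathcal X$, so this quotient description is correct. The crucial input is a rigidity statement: any topological isomorphism between two members $H, H'$ of $\mathcal X$ is induced by an element $g \in \Aut(T)$, that is, $H' = gHg^{-1}$. This is where the bulk of the work lies, since it requires reconstructing the tree and the action from the abstract topological group — via the commensurability structure of its compact open subgroups, with vertex and edge stabilizers recognizable among them and boundary-$2$-transitivity pinning the action down up to $\Aut(T)$ — and it is the step I expect to be the main obstacle. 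Granting rigidity, $\cong$ coincides on $\mathcal X$ with $\Aut(T)$-conjugacy, which I would show is closed in $\mathcal X \times \mathcal X$: if $H'_n = g_n H_n g_n^{-1}$ with $H_n \to H$ and $H'_n \to H'$, then, using that each $H'_n$ has at most two vertex orbits, I may multiply $g_n$ on the left by an element of $H'_n$ so that $g_n$ carries a fixed vertex into a fixed finite set of orbit representatives; the normalized $g_n$ then range over a compact subset of $\Aut(T)$, and any accumulation point $g$ satisfies $H' = gHg^{-1}$. Since this relation is closed (and in particular has closed classes) and $\mathcal X$ is compact Hausdorff, the quotient $\mathcal S_T$ is compact Hausdorff, as claimed.
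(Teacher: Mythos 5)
Your proposal is correct in substance and shares the paper's overall architecture (limit stays boundary-$2$-transitive; $H = H^{(\infty)}$ via the limits-of-simple-groups machinery; the Burger--Mozes monolith principle upgrades this to topological simplicity; the quotient statement rests on rigidity plus a normalization of conjugators, which you carry out exactly as the paper does), but two sub-steps take genuinely different routes. For persistence of $2$-transitivity you give a direct dynamical argument, renormalizing the elements $g_n$ by powers of hyperbolic elements of bounded translation length along $L_2$; this works (the translation-length bound follows, e.g., by pigeonhole on directed edges along a geodesic, using edge-transitivity), but the paper gets the same conclusion essentially for free from \cite{BM}*{Lemma~3.1.1}, which characterizes boundary-$2$-transitivity of a closed group by local $\infty$-transitivity --- a condition phrased via transitivity of vertex stabilizers on spheres, hence manifestly Chabauty-closed. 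For Hausdorffness of $\mathcal S_T$ you use that a closed equivalence relation on a compact Hausdorff space yields a compact Hausdorff quotient, whereas the paper verifies that the quotient map is open and the relation closed; both are valid, yours being marginally slicker given compactness. The rigidity statement you flag as the main obstacle is precisely \cite{Radu}*{Proposition~A.1}, which the paper cites rather than reproves, so ``granting rigidity'' matches the paper's own treatment.

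One caution about your inline sketch of the ``structural principle'': the step ``$N$ is transitive on $\bd T$ and is therefore cocompact'' is not valid as stated, since boundary-transitivity alone does not imply cocompactness (vertex stabilizers in $\Aut(T)$ act transitively on $\bd T$ but not cocompactly on $T$); normality must be exploited again here. Moreover, knowing that every non-trivial closed normal subgroup is cocompact does not by itself show that $\Mon(H)$, an intersection of such subgroups, is non-trivial and cocompact --- that is the genuinely non-trivial content of Proposition~\ref{proposition:BuMo} (i.e.\ \cite{BM}*{Propositions~1.2.1 and~3.1.2}), which both you and the paper ultimately invoke as a citation, so the argument stands once that reference is used in place of your sketch.
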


Theorem~\ref{theorem:Bd-2-trans} has several consequences. First of all, it can be interpreted as providing qualitative information on the complexity of the isomorphism relation within topologically simple boundary-$2$-transitive closed subgroups of $\Aut(T)$. Indeed, Theorem~\ref{theorem:Bd-2-trans} implies that that relation is \emph{smooth} in the sense of \cite{Gao}*{Definition~5.4.1}, which means that it comes at the bottom of the hierarchy of complexity of classification problems in the formalism established by invariant descriptive set theory (see \cite{Gao}*{Chapter~15}). In fact, it is tantalizing to believe that for a given tree $T$, the set $\mathcal{S}_T$ of isomorphism classes as above can be described exhaustively. This has actually recently been accomplished by the second-named author for all semi-regular trees whose vertex degrees are $\geq 6$ and such that the only finite $2$-transitive groups of those degrees are the full symmetric or alternating groups, see \cite{Radu} and Appendix~\ref{sec:appendix} below. For all those trees, the set $\mathcal{S}_T$ happens to be countable. Moreover, the second Cantor--Bendixson derivative of $\mathcal{S}_T$ is reduced to the singleton consisting of the isomorphism class of the group $\Aut(T)^+$ (see Proposition~\ref{proposition:Cantor-Bendixson} and Remark~\ref{remark:S^Alt} below). However, the classification problem remains open for semi-regular trees $T$ whose vertex degrees are the degrees of smaller finite $2$-transitive groups, like Lie-type groups or affine groups. In particular, we do not know whether there exists a tree $T$ such that $\mathcal{S}_T$ is uncountable. The case of the trivalent tree is especially intriguing. 

The compactness of $\mathcal{S}_T$ asserted by Theorem~\ref{theorem:Bd-2-trans} also fosters less ambitious hope than a full classification of $\mathcal S_T$. Indeed, it opens up the possibility to find new isomorphism types of simple groups by taking limits of known ones. Implementing this idea requires to have at hand an infinite family of pairwise non-isomorphic topologically simple groups acting boundary-$2$-transitively on the same locally finite tree $T$. Rank one simple algebraic groups over $p$-adic fields provide examples of such families. However, in all cases where it could be verified, any limit of (classes of) such groups in $\mathcal S_T$ happens to be a rank one simple algebraic groups over a local field of positive characteristic. Indeed, T.\ Stulemeijer has proved that if $T$ is the regular tree of degree $p+1$ with $p$ prime, then the set of isomorphism classes of algebraic groups in $\mathcal{S}_T$, denoted by $\mathcal{S}_T^{\mathrm{alg}}$, is closed. Moreover the non-isolated points are precisely the isomorphism classes of the simple algebraic groups over local fields of positive characteristic. That set is finite (of cardinality~$2$) if $p > 2$ and infinite if $p=2$. We refer to \cite{Stulemeijer} for general results and full details. 

Another potential source of examples for the implementation of that idea is the class of complete Kac--Moody groups of rank two over finite fields. In that class, the tree $T$ is determined by the finite ground field. Letting the defining generalized Cartan matrix run over the infinite set of possibilities in rank two, one obtains a countable family of topologically simple boundary-$2$-transitive groups in $\Sub(\Aut(T))$. The difficulty arising here is that we do not know whether those groups are pairwise non-isomorphic: we do not even know whether they form infinitely many isomorphism classes. A discussion of this rather subtle question, and partial answers, may be found in \cite{Marquis}*{Theorem~F and \S6}.

\medskip

An important tool in the proofs of the results above is provided by the notion of \textit{$k$-closures} recently introduced by Banks--Elder--Willis \cite{Banks}, some of whose properties are reviewed in \S\ref{section:BEW} below. We establish a key relation between Chabauty convergence and  $k$-closures in the general context of automorphism groups of locally finite graphs, see Proposition~\ref{proposition:convergence_k}. We deduce the following statement, which is the main intermediate step in the proof of Theorem~\ref{theorem:LimitsOfSimple}. 

\begin{theorem}\label{theorem:simpleclosed}
Let $\Lambda$ be a locally finite connected (simple, undirected) graph and $\Gamma \leq \Aut(\Lambda)$ act cocompactly on $\Lambda$. Let $H_n \to H$ be a converging sequence in $\Sub({\Aut(\Lambda)})$. Suppose that for each $n \geq 1$, there exists $\tau_n \in \Aut(\Lambda)$ such that $\tau_n \Gamma \tau_n^{-1} \leq H_n$. Then we have
$$[H : H^{(\infty)}] \leq \limsup_{n\to \infty}\, [H_n : H_n^{(\infty)}].$$
In particular, the set 
$$\{H \in \Sub(\Aut(\Lambda)) \mid H \geq \Gamma \text{ and } H = H^{(\infty)}\}$$ 
is Chabauty-closed.
\end{theorem}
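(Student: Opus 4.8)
The plan is to bound the index $[H : H^{(\infty)}]$ by understanding open finite-index subgroups of $H$ through the lens of $k$-closures and Chabauty convergence. The central object is the quotient $H/H^{(\infty)}$, which is a profinite group (being an inverse limit of the finite quotients of $H$ by its open finite-index normal subgroups). To prove the index inequality, I would show that any open normal subgroup of finite index in $H$ can be ``traced back'' to the approximating groups $H_n$, so that a large finite quotient of $H$ forces a comparably large finite quotient of $H_n$ for all large $n$.

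First I would fix a finite quotient $H/N$ with $N$ open normal of finite index in $H$, and aim to produce, for all sufficiently large $n$, an open normal finite-index subgroup $N_n \trianglelefteq H_n$ with $[H_n : N_n] \geq [H : N]$. The key tool is Proposition~\ref{proposition:convergence_k}, which (as announced in the excerpt) relates Chabauty convergence to $k$-closures: for $k$ large enough the $k$-closure of $H$ is determined by the local action of $H$ on balls of radius $k$, and Chabauty convergence $H_n \to H$ means these local actions eventually agree. Since $H \geq \tau_n^{-1}(\tau_n \Gamma \tau_n^{-1})\tau_n$-conjugates of the cocompact group $\Gamma$ sit inside each $H_n$, the cocompactness of $\Gamma$ guarantees a uniform bound on the orbit structure, so that the relevant open subgroups are detected on a bounded-radius ball. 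I would use this to show that the finite quotient $H/N$ is visible in the action of $H$ on a finite ball $B(v,k)$, and that the matching local action of $H_n$ on the corresponding ball yields the desired open finite-index normal subgroup of $H_n$.

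The technical heart — and the step I expect to be the main obstacle — is transferring normality and the finite-index property across the limit in a way that is compatible with $H^{(\infty)}$ rather than merely with some fixed open subgroup. The subtlety is that $H^{(\infty)}$ is the intersection of \emph{all} open finite-index subgroups, and a priori an infinite intersection need not behave continuously under Chabauty limits; I must instead argue quotient-by-quotient. Concretely, I would show that each continuous finite quotient of $H$ factors through the action of $H$ on a ball, invoke the cocompactness of $\Gamma$ (and hence the uniform orbit bound coming from the hypothesis $\tau_n \Gamma \tau_n^{-1} \leq H_n$) to make the radius uniform over $n$, and then read off a finite quotient of $H_n$ of at least the same order. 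Taking the supremum over finite quotients of $H$ gives $[H:H^{(\infty)}] \leq \limsup_n [H_n : H_n^{(\infty)}]$. The role of $\Gamma$ being cocompact is precisely to prevent the quotients from ``escaping to infinity'' as $n \to \infty$, ensuring the local data stabilises.

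For the final ``in particular'' statement, I would apply the inequality to a convergent sequence $H_n \to H$ all lying in the set $\{H \geq \Gamma,\ H = H^{(\infty)}\}$, taking $\tau_n = \id$ so that $\Gamma \leq H_n$ for all $n$. Closedness of $\{H \geq \Gamma\}$ in $\Sub(\Aut(\Lambda))$ is routine from the definition of the Chabauty topology, so it remains to check $H = H^{(\infty)}$ for the limit. Since each $H_n = H_n^{(\infty)}$ gives $[H_n : H_n^{(\infty)}] = 1$, the inequality forces $[H : H^{(\infty)}] = 1$, i.e.\ $H = H^{(\infty)}$, and the set is Chabauty-closed as claimed.
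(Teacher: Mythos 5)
Your overall architecture --- fix an open finite-index subgroup $N \leq_{\text{ofi}} H$, invoke Proposition~\ref{proposition:convergence_k} to relate the $H_n$ to $k$-closures of $H$, transfer the finite quotient to the groups $H_n$ uniformly in $n$, and take a supremum over $N$ --- is indeed the paper's strategy, and your deduction of the ``in particular'' statement is correct. But there is a genuine gap at the step you yourself flag as the technical heart. Your transfer mechanism rests on the claim that ``each continuous finite quotient of $H$ factors through the action of $H$ on a ball,'' from which you would ``read off'' a finite quotient of $H_n$ from matching local actions. This is not usable as stated: openness of $N$ gives $N \supseteq H_{v_0}^{[r]}$ for some $r$, so the coset $hN$ is determined by the restriction $h|_{B(v_0,r)}$; however, restriction to a fixed ball is not a homomorphism and takes infinitely many values (elements of $H$ move $v_0$ arbitrarily far), so $H/N$ does not factor through any finite group of local actions, and matching local data on a bounded ball produces no open finite-index subgroup $N_n \leq H_n$ with $[H_n : N_n] \geq [H:N]$. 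The missing idea is to take the $k$-closure of the \emph{subgroup} $N$ itself and prove it stays of finite index: this is Lemma~\ref{lemma:ofi}, namely $[\closure{k}{H} : \closure{k}{N}] \leq [H:N]$ for all sufficiently large $k$, whose proof is a nontrivial connectedness argument (one must show the local coset representative matching $g$ on $B(v,k)$ is independent of the vertex $v$). Combined with Proposition~\ref{proposition:convergence_k}, which after harmless conjugations gives $H_n \leq \closure{k}{H}$ for all large $n$, the transferred subgroup is $\closure{k}{N} \cap H_n$: it is closed of finite index in $\closure{k}{H}$ intersected with $H_n$, hence open of finite index in $H_n$.

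Note moreover that the paper's bookkeeping runs in the opposite direction from yours, which is precisely how it avoids ever proving your inequality $[H_n : N_n] \geq [H:N]$: since $\closure{k}{N} \cap H_n$ is open of finite index in $H_n$, it contains $H_n^{(\infty)}$, so $[H_n : \closure{k}{N} \cap H_n] \leq S := \limsup_n [H_n : H_n^{(\infty)}]$ automatically; this bound is then carried back to the limit via Lemma~\ref{lemma:index} (Chabauty limits of subgroups of index $\leq S$ have index $\leq S$ --- a compactness extraction also absent from your sketch), giving $[H : \closure{k}{N} \cap H] \leq S$ for each large $k$, and finally letting $k \to \infty$ with $\closure{k}{N} \to \overline{N} = N$ yields $[H:N] \leq S$. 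Your direction could be repaired --- for instance, one can show $\closure{k}{N} \cap H = N$ for $k$ large (a decreasing chain of subgroups of bounded index stabilizes) and use closedness of $\closure{k}{N}$ to keep limits of the $[H:N]$ coset representatives in distinct cosets of $\closure{k}{N} \cap H_n$ --- but any such repair still passes through Lemma~\ref{lemma:ofi}; without control of $\closure{k}{N}$ inside $\closure{k}{H}$, the quotient $H/N$ simply cannot be detected by the groups $H_n$.
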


The condition that all groups $H_n$ contain a conjugate of a fixed group $\Gamma$ acting cocompactly may be viewed as a strengthening of the condition bounding the number of orbits, which was imposed in Theorem~\ref{theorem:LimitsOfSimple}. Classical results by Bass \cite{Bass} and Bass--Kulkarni \cite{Bass-Kulkarni} ensure that when $\Lambda$ is a tree, both conditions are equivalent (see \S\ref{subsection:trees} below). Building upon this, we tighten the relation between Chabauty convergence of unimodular cocompact subgroups of $\Aut(\Lambda)$ and $k$-closures (see Corollary~\ref{corollary:tree:convergence<k-closure}) and deduce that the algebraic properties of   \textit{local pro-$\pi$-ness}  and \textit{local torsion-freeness} are both Chabauty-open in that context, see Propositions~\ref{proposition:LocalPrimeContent} and~\ref{proposition:LocallyTorsionFree}.

\medskip

Taking advantage of the rather flexible hypotheses of Theorem~\ref{theorem:simpleclosed}, we include applications to groups acting on buildings that are not necessarily trees, see Corollary~\ref{corollary:Building}. We are not aware of families of graphs other than trees where analogues of the aforementioned results by Bass--Kulkarni hold. However, we note that chamber-transitive buildings whose Weyl group is virtually free all admit a canonical continuous proper cocompact action on a tree (see Lemma~\ref{lemma:virtuallyfree}), so that the condition that the groups under consideration all contain a conjugate of a fixed group $\Gamma$ also becomes redundant in that context, see Corollary~\ref{corollary:virtuallyfree}. 

\subsection*{Acknowledgement}

We thank the anonymous referee for constructive comments and suggestions.

\section{The Chabauty space}\label{sec:Chabauty}

Given a locally compact group $G$, we denote by $\Sub(G)$ the set of closed subgroups of $G$ equipped with the Chabauty topology, which is compact Hausdorff (see \cite{Bourbaki}*{Chapitre VIII, \S5, no. 3, Th\'eor\`eme 1}). Recall that a base of neighborhoods of $H \in \Sub(G)$ in the \textbf{Chabauty topology} is given by the sets
$$\mathcal{V}_{K,U}(H) := \{J \in \Sub(G) \mid J \cap K \subseteq HU \text{ and } H \cap K \subseteq JU\},$$
where $K$ ranges over compact subsets of $G$ and $U$ over non-empty open subsets of $G$. 

Assume that $G$ is second countable. In that case,  the compact space $\Sub(G)$ is also second countable. In particular $\Sub(G)$ is metrizable by Urysohn's Metrization Theorem (alternatively, one may directly define a compatible metric on $\Sub(G)$, see \cite{Gelander}*{Exercise~2}). The locally compact groups appearing in this paper will mostly be automorphism groups of connected locally finite graphs: endowed with the compact open topology, those are second countable (totally disconnected) locally compact groups.

\begin{lemma}\label{lemma:converge}
Let $G$ be a second countable locally compact  group. A sequence $(H_n)$ in $\Sub(G)$ converges to $H \in \Sub(G)$ if and only if the two conditions below are satisfied:
\begin{enumerate}[(i)]
\item Let $(H_{k(n)})$ be a subsequence of $(H_n)$ and let $(h_{k(n)})$ be a sequence in $G$ such that $h_{k(n)} \in H_{k(n)}$ for each $n \geq 1$. If $(h_{k(n)})$ converges to $h \in G$, then $h \in H$.
\item Any $h \in H$ is the limit of a sequence $(h_n)$ with $h_n \in H_n$ for each $n \geq 1$.
\end{enumerate}
\end{lemma}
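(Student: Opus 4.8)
The plan is to exploit the fact that, since $G$ is second countable, $\Sub(G)$ is compact and metrizable, so that convergence in $\Sub(G)$ is sequential and can be probed by passing to subsequences. I would fix once and for all a decreasing symmetric neighborhood basis $U_1 \supseteq U_2 \supseteq \cdots$ of the identity in $G$, which exists by first countability, and a metric $d$ compatible with the topology of $G$.

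For the forward implication, assume $H_n \to H$. To prove (i) I argue by contraposition: suppose $h_{k(n)} \in H_{k(n)}$ with $h_{k(n)} \to h \notin H$. Since $H$ is closed, there is an open neighborhood $W$ of the identity with $hW \cap H = \emptyset$; choosing a symmetric open $U$ with $UU \subseteq W$ and a compact neighborhood $K$ of $h$ with $K \subseteq hU$, I have $hUU \cap H = \emptyset$. For $n$ large, $h_{k(n)} \in K$ and, by convergence, $H_{k(n)} \in \mathcal{V}_{K,U}(H)$, whence $h_{k(n)} \in H_{k(n)} \cap K \subseteq HU$. Factoring $h_{k(n)} = a_n u_n$ with $a_n \in H$, $u_n \in U$, and writing $h_{k(n)} \in K \subseteq hU$, I obtain $a_n \in hUU \cap H$, a contradiction. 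For (ii), given $h \in H$ I fix a compact neighborhood $K$ of $h$ and, for each $m$, use convergence to find an increasing sequence $N_m$ with $H_n \in \mathcal{V}_{K,U_m}(H)$ for all $n \ge N_m$; then $h \in H \cap K \subseteq H_n U_m$ furnishes $b_n \in H_n \cap h U_m^{-1}$. Setting $h_n = b_n$ for $N_m \le n < N_{m+1}$ and $h_n = e$ for $n < N_1$ yields a sequence in $H_n$ converging to $h$ since $U_m \to \{e\}$.

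For the converse, I would use the subsequence criterion for convergence in the metrizable space $\Sub(G)$: it suffices to show that every subsequence of $(H_n)$ admits a further subsequence converging to $H$. Given a subsequence, compactness of $\Sub(G)$ provides a sub-subsequence $H_{k'(n)} \to H'$ for some $H' \in \Sub(G)$. Applying the already-proven forward implication to this convergent sub-subsequence, $H'$ satisfies the analogues of (i) and (ii). Comparing with the hypotheses then forces $H' = H$: the inclusion $H \subseteq H'$ follows by taking $h \in H$, approximating it via hypothesis (ii) by $h_n \in H_n$, and feeding $h_{k'(n)} \to h$ into condition (i) for $H'$; the reverse inclusion $H' \subseteq H$ follows by approximating $h \in H'$ via (ii) for $H'$ and applying hypothesis (i) to the resulting sequence along $k'(n)$. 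As every subsequence has a further subsequence converging to $H$, the whole sequence converges to $H$.

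The routine bookkeeping aside, the step I expect to require the most care is the manipulation of the defining neighborhoods $\mathcal{V}_{K,U}(H)$: keeping track of left versus right translates and of inverses when factoring elements through $HU$, and arranging a single compact set $K$ that works uniformly as the neighborhoods $U_m$ shrink in the proof of (ii). The backward direction is then essentially formal, resting on the compactness and metrizability of $\Sub(G)$ together with the subsequence characterization of convergence.
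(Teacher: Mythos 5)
Your proof is correct. Note, however, that the paper does not actually prove this lemma: it simply cites Guivarc'h--R\'emy (\emph{Group-theoretic compactification of Bruhat--Tits buildings}, Lemma~2), so your argument supplies a self-contained verification of a fact the authors outsource to the literature. Your route is the standard one and is sound in all its steps: the forward direction is a direct manipulation of the basic neighborhoods $\mathcal{V}_{K,U}(H)$ (your restriction of $U$ to symmetric identity neighborhoods is the correct reading of the paper's basis --- for arbitrary nonempty open $U$ the set $\mathcal{V}_{K,U}(H)$ need not even contain $H$), and the factorizations $h_{k(n)} = a_n u_n \in HU$ against $h_{k(n)} \in hU$, as well as the diagonal extraction $h_n \in hU_m^{-1}$ in (ii), are carried out correctly. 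For the converse, your use of compactness and metrizability of $\Sub(G)$ to extract a convergent sub-subsequence $H_{k'(n)} \to H'$, followed by the double inclusion $H = H'$ obtained by playing hypothesis (ii) against property (i) of $H'$ and property (ii) of $H'$ against hypothesis (i), is exactly right; one small remark is that the final subsequence principle (every subsequence has a further subsequence converging to $H$ implies $H_n \to H$) holds in any topological space, so metrizability is genuinely needed only to convert compactness into sequential compactness. What your approach buys is independence from the reference; what the citation buys the authors is brevity, since the statement is folklore in the Chabauty setting.
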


\begin{proof}
See~\cite{Guivarch}*{Lemma~2}.
\end{proof}

The following results are then immediate.

\begin{lemma}\label{lemma:conjugate}
Let $G$ be a second countable locally compact  group. The conjugation action of $G$ on $\Sub(G)$ is jointly continuous, i.e.\ if $g_n \to g$ is a converging sequence in $G$ and $H_n \to H$ is a converging sequence in $\Sub(G)$, then $g_n H_n g_n^{-1} \to g H g^{-1}$.
\end{lemma}

\begin{proof}
This is an easy consequence of Lemma~\ref{lemma:converge}.
\end{proof}

\begin{lemma}\label{lemma:inter}
Let $G$ be a second countable locally compact  group.
\begin{enumerate}[(1)]
\item If $(H_n)$ is a descending chain in $\Sub(G)$, then $H_n \to \bigcap_{i \geq 1} H_i$;
\item If $(H_n)$ is an ascending chain in $\Sub(G)$, then $H_n \to \overline{\bigcup_{i \geq 1} H_i}$.
\end{enumerate}
\end{lemma}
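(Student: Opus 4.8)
The plan is to verify, in each case, the two convergence criteria (i) and (ii) of Lemma~\ref{lemma:converge}. Since $G$ is second countable and locally compact, it is metrizable, so I would fix a compatible metric $d$ on $G$ and argue with sequences throughout. I would first record that both candidate limits genuinely lie in $\Sub(G)$: the set $\bigcap_{i \geq 1} H_i$ is an intersection of closed subgroups, and $\overline{\bigcup_{i \geq 1} H_i}$ is the closure of $\bigcup_{i \geq 1} H_i$, which is a subgroup because the union of an ascending chain of subgroups is a subgroup, and the closure of a subgroup of a topological group is again a subgroup.

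For part (1), with $H = \bigcap_{i \geq 1} H_i$, condition (ii) is immediate: any $h \in H$ lies in every $H_n$, so the constant sequence $h_n = h$ works. For condition (i), I would take a subsequence $(h_{k(n)})$ with $h_{k(n)} \in H_{k(n)}$ converging to some $h$, and fix an index $i$. Because $k(n) \to \infty$, one has $k(n) \geq i$ for all large $n$, hence $H_{k(n)} \subseteq H_i$ by the descending property and $h_{k(n)} \in H_i$; as $H_i$ is closed, $h \in H_i$, and since $i$ is arbitrary, $h \in \bigcap_i H_i = H$.

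For part (2), with $H = \overline{\bigcup_{i \geq 1} H_i}$, condition (i) is now the easy direction: any $h_{k(n)} \in H_{k(n)} \subseteq H$ converging to $h$ forces $h \in H$ since $H$ is closed. The substance is condition (ii). Given $h \in H$, I would first show that the distances $\delta_n := \inf_{g \in H_n} d(g, h)$ tend to $0$. Indeed, for any $\varepsilon > 0$ there is $g \in \bigcup_i H_i$ with $d(g, h) < \varepsilon$; this $g$ lies in some $H_{i_0}$, hence in every $H_n$ with $n \geq i_0$ by the ascending property, so $\delta_n \leq \varepsilon$ for all such $n$. Choosing for each $n$ an element $h_n \in H_n$ with $d(h_n, h) < \delta_n + 1/n$ then yields $h_n \to h$, which establishes (ii).

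The two cases are dual in spirit, but in each only one criterion carries real weight: the descending case rests on the closedness of each $H_i$ together with $k(n) \to \infty$ to trap the limit inside the intersection, while the ascending case rests on approximating $h$ from within $H_n$ for all large $n$. The single mildly delicate point, which I would single out as the main obstacle, is the verification that $\delta_n \to 0$ in part (2)(ii): it is exactly here that the ascending inclusions are needed, to guarantee that an element approximating $h$ eventually belongs to \emph{every} $H_n$ rather than merely to infinitely many of them.
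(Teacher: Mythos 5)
Your proof is correct and follows the same route as the paper: both verify conditions (i) and (ii) of Lemma~\ref{lemma:converge}, with identical arguments in part (1). The paper dismisses part (2) as ``similar,'' so your explicit metric argument for the approximation step there (showing $\delta_n \to 0$ via the ascending inclusions) is a legitimate fleshing-out of exactly what the paper leaves implicit.
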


\begin{proof}
We prove (1), the proof of (2) being similar. Let us check (i) and (ii) in Lemma~\ref{lemma:converge}. Any $h \in \bigcap_{i \geq 1} H_i$ is the limit of the constant sequence $(h)$, so (ii) is clear. Now in order to prove (i), let $h_{k(n)} \to h$ be a converging sequence in $G$ such that $h_{k(n)} \in H_{k(n)}$ for each $n \geq 1$. For each $i \geq 1$, the sequence $(h_{k(n)})_{k(n)\geq i}$ is contained in $H_i$. Since $H_i$ is closed and $h_{k(n)} \to h$, we get $h \in H_i$. This being true for any $i \geq 1$, we have $h \in \bigcap_{i \geq 1} H_i$.
\end{proof}

We also record the following essential result for the sake of future references. 

\begin{theorem}\label{theorem:Unimod}
Let $G$ be a locally compact group. The set $\Sub(G)^0$ of unimodular closed subgroups of $G$ is closed in $\Sub(G)$.
\end{theorem}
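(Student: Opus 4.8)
The plan is to prove that the set $\Sub(G)^0$ of unimodular closed subgroups is Chabauty-closed by showing that it is sequentially closed (using the metrizability available in the second countable case, and a reduction to the general case if needed). So I would take a sequence $H_n \to H$ with each $H_n$ unimodular and aim to show $H$ is unimodular. The natural approach is to exploit the interplay between the modular function and the action on Haar measure: a locally compact group is unimodular precisely when its modular homomorphism $\Delta \colon G \to \R_{>0}$ is trivial, equivalently when left Haar measure is also right-invariant. The difficulty is that the Haar measures of the $H_n$ and of $H$ live on different groups, so one cannot directly pass to a limit of modular functions; one must relate the intrinsic modular functions of the subgroups to something computed uniformly inside the ambient $G$.

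First I would recall the criterion for unimodularity of a closed subgroup $H \leq G$ that is expressible via the conjugation action on a fixed compact open subgroup, since in our applications $G = \Aut(\Lambda)$ is totally disconnected and possesses a compact open subgroup $U$ (e.g.\ a vertex stabilizer). For such $G$, a closed subgroup $H$ is unimodular if and only if for every $h \in H$ the index relation $[U_H : U_H \cap h U_H h^{-1}] = [U_H : U_H \cap h^{-1} U_H h]$ holds for a compact open subgroup $U_H = U \cap H$ of $H$; equivalently, the modular function $\Delta_H(h)$ equals the ratio of these two indices. The key point is that these indices are computed from the orbit structure of $h$ acting on cosets, which is a \emph{discrete} datum that behaves well under Chabauty limits. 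Concretely, given $h \in H$, by Lemma~\ref{lemma:converge}(ii) I can approximate $h$ by elements $h_n \in H_n$, and by the uniform structure of the compact open subgroup the relevant indices for $h_n$ in $H_n$ stabilize to those for $h$ in $H$ along the convergence; since each $\Delta_{H_n}(h_n) = 1$ by unimodularity of $H_n$, the stabilized ratio forces $\Delta_H(h) = 1$.

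The main obstacle will be making the convergence of these indices rigorous: I need to argue that for $n$ large the intersection $H_n \cap K$ approximates $H \cap K$ on a large enough compact set $K$ (containing $h$ and enough of the compact open subgroup $U$) so that the combinatorial indices measuring the modular function agree. This is where both conditions (i) and (ii) of Lemma~\ref{lemma:converge} enter: condition (ii) gives approximating elements, while condition (i) prevents "extra" elements from appearing in the limit and distorting the index count. I would handle this by fixing a small enough identity neighborhood $U$ (a compact open subgroup on which conjugation by $h$ is controlled) and noting that for totally disconnected $G$ the Chabauty neighborhoods $\mathcal{V}_{K,U}(H)$ precisely pin down $H \cap K$ modulo $U$, so the finite sets of cosets entering the index computation are literally equal for all large $n$.

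An alternative, cleaner route — which I suspect is what the authors use — is to invoke a known structural characterization: the uniform discreteness of the set of possible modular functions, or a general topological result that the unimodularity locus is closed in any locally compact group (not merely the totally disconnected second countable case). In that spirit I would try to phrase the proof without choosing a compact open subgroup, using instead the characterization that $H$ is unimodular iff its left Haar measure class is conjugation-invariant, and showing that the failure of unimodularity is an \emph{open} condition (there exists $h \in H$ and a compact set witnessing $\Delta_H(h) \neq 1$ with a strict inequality stable under perturbation), whence its complement, the unimodular locus, is closed. The genuinely delicate step in either approach is controlling the passage from the ambient group's data to each subgroup's intrinsic Haar measure; I would expect to lean on the fact that for the graph automorphism groups of interest this reduces to counting orbits of compact open subgroups, which Chabauty convergence controls exactly.
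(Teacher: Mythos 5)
Your main line of argument has a genuine gap at exactly the step you flag as delicate, and the gap cannot be repaired in the way you suggest. For a closed subgroup $H$ of a totally disconnected $G$ with compact open $U$, writing $V = H \cap U$, the modular function is indeed $\Delta_H(h) = [V : V \cap hVh^{-1}]\,/\,[V : V \cap h^{-1}Vh]$, and your criterion is correct. But the claim that a Chabauty neighborhood $\mathcal{V}_{K,U'}(H)$ pins down the coset data so that ``the finite sets of cosets entering the index computation are literally equal for all large $n$'' is false: the subgroups $V_n = H_n \cap U$ and the subgroups one indexes by, $V_n \cap h_nV_nh_n^{-1}$, vary with $n$ at a scale much finer than any fixed $U'$, and Chabauty convergence does not make $H \mapsto H \cap U$ locally constant. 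The paper itself contains a counterexample to stabilization: in the remark following Proposition~\ref{proposition:LocalPrimeContent}, a properly ascending chain of cocompact lattices $\Gamma_1 < \Gamma_2 < \cdots$ in $\Aut(T)$ converges to a non-discrete group $H$, so $\Gamma_n \cap U$ is a finite group growing without bound while $H \cap U$ is infinite profinite; no coset data stabilizes. What does survive a compactness argument via Lemma~\ref{lemma:converge} is only a one-sided bound: if $g_1,\dots,g_m \in V$ lie in distinct cosets of $V \cap hVh^{-1}$, their approximants in $V_n$ eventually lie in distinct cosets of $V_n \cap h_nV_nh_n^{-1}$, giving $\liminf_n A_n \geq A$ and $\liminf_n B_n \geq B$ for the two indices. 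From $A_n = B_n$ (unimodularity of $H_n$) with both possibly tending to infinity, one cannot deduce $A = B$, so lower semicontinuity alone does not close the argument. In addition, your combinatorial approach is confined to totally disconnected second countable $G$, whereas Theorem~\ref{theorem:Unimod} is stated (and used via Theorem~\ref{theorem:Unimod} in Corollary~\ref{corollary:tree:common-lattice2} and Proposition~\ref{proposition:BoundedCovol}) for arbitrary locally compact groups, where sequential closedness need not imply closedness.

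For comparison: the paper gives no internal proof at all; it cites \cite{Bourbaki}*{Chapitre VIII, \S5, no.~3, Th\'eor\`eme~1}, which is precisely the ``cleaner route'' you gesture at in your final paragraph but do not carry out. Bourbaki's argument constructs, for closed subgroups $H$ of $G$, a choice of left Haar measures $\mu_H$ depending continuously on $H$ (after suitable normalization, in the weak-$*$ sense against compactly supported continuous functions on $G$). Closedness of the unimodular locus then follows softly: if $H_n \to H$ with each $\mu_{H_n}$ right-invariant, then for $h \in H$ one takes approximants $h_n \in H_n$ with $h_n \to h$ (Lemma~\ref{lemma:converge}(ii)) and passes to the limit in the identity $\int f(xh_n)\,d\mu_{H_n}(x) = \int f\,d\mu_{H_n}$, obtaining right invariance of $\mu_H$. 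The genuinely hard content is the continuous normalization of the Haar measures, which is the substance of Bourbaki's theorem; your index heuristic is a discrete shadow of this continuity statement, but it does not substitute for it.
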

\begin{proof}
See \cite{Bourbaki}*{Chapitre VIII, \S5, no. 3, Th\'eor\`eme 1}.
\end{proof}

The next basic lemma plays a key role in the proof of Proposition~\ref{proposition:CLOPEN}.

\begin{lemma}\label{lemma:basicChabauty}
Let $G$ be a locally compact group and $C$ be a compact open subset of $G$ (e.g. $C$ is a coset of a compact open subgroup). Then the set
$$\{H \in \Sub(G) \mid H \cap C \neq \varnothing\}$$
is clopen in $\Sub(G)$.
\end{lemma}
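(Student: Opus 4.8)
Denote by $S = \{H \in \Sub(G) \mid H \cap C \neq \varnothing\}$. The plan is to prove that $S$ is open and that its complement is open, working directly with the neighborhood base $\mathcal{V}_{K,U}(H)$ recalled above; this keeps the argument valid for an arbitrary locally compact $G$, with no second-countability assumption. In both directions the two hypotheses on $C$ play distinct roles: openness of $C$ drives the first half, compactness of $C$ the second.

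For openness, I would fix $H \in S$ and choose $h \in H \cap C$. Since $C$ is open, there is a symmetric open neighborhood $U$ of the identity with $hU \subseteq C$. Taking the compact set $K = \{h\}$ and considering the basic neighborhood $\mathcal{V}_{K,U}(H)$, any $J$ in it satisfies $H \cap K = \{h\} \subseteq JU$, so $h = ju$ for some $j \in J$ and $u \in U$; then $j = hu^{-1} \in hU^{-1} = hU \subseteq C$, whence $j \in J \cap C$. Thus $\mathcal{V}_{K,U}(H) \subseteq S$, and since $\mathcal{V}_{K,U}(H)$ is a neighborhood of $H$, this shows $H$ is interior to $S$.

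For closedness, I would instead show that $\{H \in \Sub(G) \mid H \cap C = \varnothing\}$ is open. The crucial input is the standard separation fact in topological groups: if $H$ is closed, $C$ is compact, and $H \cap C = \varnothing$, then there is an open neighborhood $U$ of the identity with $HU \cap C = \varnothing$. I would obtain this by covering $C$ with finitely many translates $c_i V_i$ of symmetric identity-neighborhoods chosen so that $c_i V_i V_i$ avoids $H$, and setting $U$ to be their (symmetric) intersection. Granting such a $U$, I set $K = C$ and consider $\mathcal{V}_{C,U}(H)$: any $J$ in it satisfies $J \cap C \subseteq HU$, and since $HU \cap C = \varnothing$ this forces $J \cap C = \varnothing$. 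Hence the complement of $S$ is open and $S$ is closed.

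The main obstacle is the closedness half, concentrated in the uniform separation $HU \cap C = \varnothing$: this is exactly the point where the compactness of $C$ and the closedness of $H$ must be used together (pointwise separation is immediate, but passing to a single $U$ that works simultaneously for all of $C$ requires the finite subcover). The openness half is essentially formal once $C$ is open, and it uses only a single point of $H \cap C$.
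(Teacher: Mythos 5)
Your proof is correct: the openness half is sound (with $K=\{h\}$ and a symmetric $U$ satisfying $hU \subseteq C$, the condition $H \cap K \subseteq JU$ does produce $j = hu^{-1} \in hU \subseteq C$), and the closedness half is also complete, since your covering argument — finitely many $c_iV_i$ with $c_iV_iV_i \cap H = \varnothing$, then $U = \bigcap_i V_i$ symmetrized — does give the uniform separation $HU \cap C = \varnothing$, after which $J \cap C \subseteq HU$ forces $J \cap C = \varnothing$ for every $J \in \mathcal{V}_{C,U}(H)$.

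The organization, however, genuinely differs from the paper's. The paper does not split into two cases: it first uses compactness and openness of $C$ together to manufacture a single identity neighborhood $U$, depending only on $C$ and not on $H$, with the invariance property $CU^{-1} = C$. For this one $U$ and \emph{any} $H$, the basic neighborhood $\mathcal{V}_{C,U}(H)$ then has the feature that $J \cap C \neq \varnothing$ if and only if $H \cap C \neq \varnothing$ for all $J$ in it: if $h \in H \cap C \subseteq JU$, writing $h = ju$ gives $j = hu^{-1} \in CU^{-1} = C$, and the argument is symmetric in $H$ and $J$. So the predicate ``meets $C$'' is locally constant on $\Sub(G)$, and the set and its complement are open by one and the same computation. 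Your version trades this uniform invariance trick for two $H$-dependent choices of $U$: a pointwise absorption argument when $H$ meets $C$, and the standard tube-type separation lemma when it does not. What your route buys is that each half quotes only textbook facts about topological groups; what the paper's buys is economy — one neighborhood, valid for every $H$ simultaneously, in which both directions of the equivalence fall out of the same one-line manipulation. The underlying mechanism (openness of $C$ absorbing a small right translate, compactness supplying a finite subcover) is identical in substance, so the two proofs are close cousins, with the paper's $CU^{-1} = C$ step being precisely the uniform-in-$H$ packaging of your two separate compactness arguments.
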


\begin{proof}
Since $C$ is compact, we have $\bigcap_U CU^{-1} = C$, where the intersection is taken over all open, relatively compact, identity neighbourhoods $U$ in $G$. Since $C$ is also open, it follows that   there exists an open, relatively compact, identity neighborhood $U$ in $G$ such that $CU^{-1} = C$. For any $H \in \Sub(G)$, we then consider the basic Chabauty-neighborhood
$$\mathcal{V}_{C,U}(H) = \{J \in \Sub(G) \mid J \cap C \subseteq HU \text{ and } H \cap C \subseteq JU\}$$
of $H$. We observe that, for any $J \in \mathcal{V}_{C,U}(H)$, we have $J \cap C \neq \varnothing$ if and only if $H \cap C \neq \varnothing$. Thus  the set
$\{H \in \Sub(G) \mid H \cap C = \varnothing\}$ and its complement $\{H \in \Sub(G) \mid H \cap C \neq \varnothing\}$ are both open. 
\end{proof}

In the following proposition, as well as in the rest of the paper, we adopt the terminology from \cite{Bass}*{\S 1} concerning graphs. Given a graph $\Lambda$ and a group $H \leq \Aut(\Lambda)$ acting without inversion on $\Lambda$, one can form the quotient graph $H \backslash \Lambda$ and the canonical projection $p \colon \Lambda \to H \backslash \Lambda$. We recall from  \cite{BassLubotzky}*{\S 2.5} that the quotient graph $Q = H \backslash \Lambda$ is an \textbf{edge-indexed graph}, i.e.\ it comes equipped with the map $i$ associating to each oriented edge $e$ of $Q$ (where a geometric edge is seen as a pair of oriented edges) the cardinal
$$i(e) = \# \{a \in E(\Lambda) \mid p(a) = e \text{ and $x$ is the origin of $a$}\},$$
where $x$ is any vertex in $\Lambda$ such that $p(x)$ is the origin vertex of $e$. 
Since $H$ always acts without inversion on the first barycentric subdivision $\Lambda^{(1)}$ of $\Lambda$, it follows that any group $H \leq \Aut(\Lambda)$ yields a well defined edge-indexed quotient graph $H \backslash \Lambda^{(1)}$.

We also need to define a \textbf{coloring} of a graph $\Lambda$ as a map $c \colon V(\Lambda) \to \mathcal{C}$, where $\mathcal{C}$ is any set. We write $\Lambda_c$ for $\Lambda$ considered with its coloring $c$ and $\Aut(\Lambda_c)$ for the group of all automorphisms of $\Lambda$ preserving $c$.

\begin{proposition}\label{proposition:CLOPEN}
Let $\Lambda$ be a locally finite connected graph and $(Q, i)$ be a finite edge-indexed graph. Then the following assertions hold. 
\begin{enumerate}[(1)]
\item Let $c \colon V(\Lambda) \to \mathcal C$ and $c' \colon V(Q) \to \mathcal{C}$ be colorings of the graphs $\Lambda$ and $Q$ respectively. For any closed subset $\mathcal H \subseteq \Sub(\Aut(\Lambda_c))$ consisting of groups acting without inversion, the set 
$$\{H \in \mathcal H \mid H \backslash \Lambda_c \cong (Q_{c'},i)\}$$
is clopen in $\mathcal H$. 
\item Let $c \colon V(\Lambda^{(1)}) \to \{0,1\}$ be the coloring of $\Lambda^{(1)}$ defined by setting $c(v) = 0$ if   $v$ is a  vertex of $\Lambda$ and $c(v) = 1$ if $v$ the midpoint of a geometric edge of $\Lambda$. Let  $c' \colon V(Q) \to \{0,1\}$ be any coloring of $Q$. The set 
$$\{H \in \Sub(\Aut(\Lambda)) \mid H \backslash (\Lambda^{(1)})_c \cong (Q_{c'},i)\}$$
is clopen in $\Sub(\Aut(\Lambda))$. 
\item For any $C > 0$, the set
$$\Sub (\Aut(\Lambda))_{\leq C} := \{H \in \Sub(\Aut(\Lambda)) \mid \# V(H \backslash \Lambda) \leq C\}$$
is clopen in $\Sub(\Aut(\Lambda))$. 
\end{enumerate}
\end{proposition}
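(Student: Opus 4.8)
The plan is to prove the three assertions in sequence, reducing (2) and (3) to (1) wherever possible, since (1) is the general statement about isomorphism type of colored quotient graphs. I would first set up the key observation that for a locally finite connected graph, the isomorphism type of the quotient edge-indexed graph $H\backslash\Lambda$ is determined by the action of $H$ on a bounded ball around a fixed vertex. Concretely, fix a base vertex $x_0\in V(\Lambda)$; since $\Lambda$ is connected, locally finite, and $(Q,i)$ is finite, there is a radius $r$ such that the ball $B(x_0,r)$ surjects onto all of $Q$ under the projection, and moreover the edge-indices $i(e)$ can all be read off from how $H$ moves vertices and edges inside $B(x_0,r+1)$. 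The stabilizer $\Aut(\Lambda)_{B(x_0,r)}$ of that ball (pointwise) is a compact open subgroup, and the information ``$H\backslash\Lambda_c\cong(Q_{c'},i)$'' only depends on the finite set of group elements of $H$ that move $x_0$ inside $B(x_0,r)$, i.e.\ on the finite collection of cosets $H\cap g\,\Aut(\Lambda)_{B(x_0,r)}$ for $g$ ranging over the finitely many elements carrying $x_0$ to another vertex in the ball.

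The main technical step is then to express the condition in assertion~(1) as a finite Boolean combination of conditions of the form ``$H\cap C\neq\varnothing$'' for compact open sets $C$, so that Lemma~\ref{lemma:basicChabauty} applies directly. First I would note that the compact open stabilizer $K:=\Aut(\Lambda_c)_{B(x_0,r)}$ has only finitely many cosets $gK$ meeting $B(x_0,r)$ in the sense that $g\cdot x_0\in B(x_0,r)$, because $\Lambda$ is locally finite; each such $gK$ is a compact open subset of $\Aut(\Lambda_c)$. The quotient $H\backslash\Lambda_c$ together with its edge-indexing and coloring $c'$ is completely determined by which of these finitely many cosets $gK$ satisfy $H\cap gK\neq\varnothing$, together with the analogous data for vertices in $B(x_0,r+1)$ needed to compute the indices $i(e)$. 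Since there are only finitely many cosets involved and only finitely many possible isomorphism types $(Q_{c'},i)$ of a finite graph with bounded vertex set, the set $\{H\mid H\backslash\Lambda_c\cong(Q_{c'},i)\}$ is a finite Boolean combination of the clopen sets $\{H\mid H\cap gK\neq\varnothing\}$ provided by Lemma~\ref{lemma:basicChabauty}. Intersecting with the closed set $\mathcal H$ yields a clopen subset of $\mathcal H$, which proves~(1). The step I expect to be most delicate is verifying carefully that a \emph{single} finite radius $r$ suffices to recover both the quotient graph and all its edge-indices uniformly across all $H\in\mathcal H$; this requires using that $H\backslash\Lambda_c\cong(Q_{c'},i)$ forces the quotient to be finite, so the diameter of $Q$ bounds how far one must look in $\Lambda$, but one must rule out that the edge-indices detect identifications happening arbitrarily deep in the tree of $\Lambda$.

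For assertion~(2), I would apply~(1) to the barycentric subdivision $\Lambda^{(1)}$ in place of $\Lambda$, taking $c$ to be the canonical $\{0,1\}$-coloring distinguishing original vertices from edge-midpoints and $\mathcal H=\Sub(\Aut(\Lambda))$ the whole space. The only point to check is that $\Aut(\Lambda)$ maps into $\Aut((\Lambda^{(1)})_c)$ and that every element of $\Sub(\Aut(\Lambda))$, viewed inside $\Aut(\Lambda^{(1)})$, acts without inversion on $\Lambda^{(1)}$ and preserves $c$; both follow from the remarks preceding the proposition, so~(1) applies and gives the clopen conclusion. Finally, for assertion~(3), I would observe that $\#V(H\backslash\Lambda)\leq C$ is equivalent to saying that $H\backslash(\Lambda^{(1)})_c$ has at most $C$ vertices of color~$0$, and hence that $H\backslash(\Lambda^{(1)})_c$ is isomorphic to one of finitely many edge-indexed colored graphs $(Q_{c'},i)$ whose number of color-$0$ vertices is at most $C$. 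Here I must argue there are only finitely many such quotient types: the number of color-$0$ vertices is bounded by $C$, the number of color-$1$ vertices is bounded since $\Lambda$ is locally finite (each original vertex has bounded degree, bounding the number of incident edges and hence midpoints in the quotient), and the edge-indices $i(e)$ are bounded by the maximal vertex degree of $\Lambda$. Thus $\Sub(\Aut(\Lambda))_{\leq C}$ is a finite union of the clopen sets furnished by~(2), hence clopen.
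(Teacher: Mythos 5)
Your overall strategy---encoding the condition as a finite Boolean combination of conditions $H\cap C\neq\varnothing$ with $C$ compact open, then invoking Lemma~\ref{lemma:basicChabauty}---is in essence the paper's strategy, and your treatment of parts (2) and (3) follows the same reductions as the paper. But there is a genuine gap in part (1), precisely at the step you flag as delicate, and you misdiagnose where the difficulty lies. The worry you raise (that the edge-indices might detect identifications happening arbitrarily deep in $\Lambda$) is not the problem: any $h\in H$ carrying one vertex or edge of $B(x_0,r)$ to another satisfies $d(h(x_0),x_0)\leq 2r+2$ by the triangle inequality, so all identifications among objects of the ball, and hence all indices $i(e)$ and colors, are visible in the finitely many cosets $gK$ with $d(g(x_0),x_0)\leq 2r+2$ (note that your list, restricted to $g(x_0)\in B(x_0,r)$, is too small for this and must be enlarged). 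The real problem is the unproved claim that the isomorphism type of $H\backslash\Lambda_c$ ``only depends'' on this coset pattern: a group $H'$ whose pattern agrees with that of some $H$ satisfying $H\backslash\Lambda_c\cong (Q_{c'},i)$ could a priori possess additional orbits of vertices or edges meeting $B(x_0,r)$ nowhere, making its quotient strictly larger while being locally indistinguishable from $H$ at scale $r$. Your appeal to the finiteness of $Q$ only yields the forward direction---every $H$ in the set admits a connected fundamental domain through $x_0$ contained in $B(x_0,\#V(Q))$---and says nothing about such an $H'$.

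What closes this gap is the connectedness certificate, which is exactly the content of Claim~\ref{claim:Hcof} in the paper's proof: the condition ``the finite set $\mathcal F$ meets every $H$-orbit of vertices and edges'' is equivalent to the finitely many \emph{local} conditions that $H$ moves each vertex and edge neighbouring $\mathcal F$ into $\mathcal F$, because the $H$-saturation $H\mathcal F$ is then closed under passing to incident edges and adjacent vertices and so, by connectedness of $\Lambda$, exhausts $V(\Lambda)\cup E(\Lambda)$. Each of these conditions reads $H\cap C\neq\varnothing$ with $C$ compact open, hence is clopen by Lemma~\ref{lemma:basicChabauty}; once this certificate is built into your coset pattern, your determination claim becomes correct and the rest of your argument goes through, essentially recovering the paper's proof (the paper organizes the same data as a union over finitely many connected candidate fundamental domains $\mathcal F_j$ rather than one large ball). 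A secondary inaccuracy occurs in your part (3): a locally finite graph need not have bounded degree, so the ``maximal vertex degree of $\Lambda$'' you invoke may not exist. The finiteness of the possible quotient types should instead be derived from the observation that if $\Sub(\Aut(\Lambda))_{\leq C}$ is non-empty, then $\Lambda$ carries an action with at most $C$ vertex orbits, so its vertex degrees take at most $C$ distinct values and are in particular bounded; when the set is empty, the assertion is trivial.
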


\begin{proof}
(1). Let $\mathcal F \subseteq V(\Lambda) \cup E(\Lambda)$ be a finite set of vertices and edges of $\Lambda$. We denote by 
$$\mathcal H_{\text{co-}\mathcal F}$$
the set of those $H \in \mathcal H$ such that $H \mathcal F = V(\Lambda) \cup E(\Lambda)$, i.e.\ those $H \in \mathcal H$ such that $\mathcal F$ meets every $H$-orbit of vertices and every $H$-orbit of edges in $\Lambda$. 

\begin{claim}\label{claim:Hcof}
The set $\mathcal H_{\text{co-}\mathcal F}$ is clopen in $\mathcal H$.
\end{claim}

\begin{claimproof}
Define $\widetilde{\mathcal F}$ as the set consisting of all vertices that are adjacent to a vertex in $\mathcal F$ or incident to an edge in $\mathcal F$. 
Since $\mathcal F$ is finite and $\Lambda$ is locally finite, we infer that $\widetilde{\mathcal F}$ is finite.
Define the set
$$\mathcal J = \{J \in \mathcal H \mid \forall x \in \widetilde{\mathcal F}, \ \exists j \in J : j(x) \in \mathcal F\}.$$
It is clear that $\mathcal H_{\text{co-}\mathcal F} \subseteq \mathcal{J}$, and we claim that $\mathcal H_{\text{co-}\mathcal F} = \mathcal{J}$. Indeed, let $J \in \mathcal J$. Observe that $X = J\mathcal F$ is a subset of $V(\Lambda) \cup E(\Lambda)$ satisfying the property that for any vertex $x$ in $X$, all edges of $\Lambda$ incident to $x$ and all vertices of $\Lambda$ adjacent to $x$ are also in $X$. Since $\Lambda$ is connected, we deduce that $X = V(\Lambda) \cup E(\Lambda)$ and hence that $J \in \mathcal H_{\text{co-}\mathcal F}$.

Now remark that
$$\mathcal H_{\text{co-}\mathcal F} = \mathcal{J} = \bigcap_{x \in \tilde{\mathcal F}} \{J \in \mathcal H \mid J \cap C_x \neq \varnothing\},$$
where $C_x$ is the compact open subset of $\Aut(\Lambda)$ consisting of the elements $h$ with $h(x) \in \mathcal F$. As $\widetilde{\mathcal F}$ is finite, Lemma~\ref{lemma:basicChabauty} ensures that $\mathcal H_{\text{co-}\mathcal F}$ is clopen in $\mathcal{H}$.
\end{claimproof}

\begin{claim}\label{claim:VQ}
The set 
$$\mathcal V_{(Q_{c'}, i), \mathcal F} := \{H \in \mathcal H_{\text{co-}\mathcal F} \mid H \backslash \Lambda_c \cong (Q_{c'},i)\}$$
is clopen in $\mathcal H$.
\end{claim}

\begin{claimproof}
For each $H \in \mathcal H_{\text{co-}\mathcal F}$, the isomorphism type of the edge-indexed (colored) quotient graph $H \backslash \Lambda_c$ is completely determined by the following finite subset of $\mathcal F \times (V(\Lambda) \cup E(\Lambda))$:
$$S_H := \{(x, y) \in {\mathcal F} \times (V(\Lambda) \cup E(\Lambda)) \mid \exists h \in H : hx = y \text{ and } d(y, \mathcal F) \leq 1\}.$$
Moreover, it is clear from Lemma~\ref{lemma:converge} that if $H_n \to H$ in $\mathcal H_{\text{co-}\mathcal F}$ then $S_{H_n} = S_H$ for sufficiently large $n$. Consequently, the set $\mathcal V_{(Q_{c'}, i), \mathcal F}$ is clopen in $\mathcal H_{\text{co-}\mathcal F}$. As $\mathcal H_{\text{co-}\mathcal F}$ is itself clopen in $\mathcal H$ by Claim~\ref{claim:Hcof}, the conclusion follows.
\end{claimproof}

\medskip

We now finish the proof as follows. We must show that the set $\mathcal V_{(Q_{c'},i)} := \{H \in \mathcal H \mid H \backslash \Lambda_c \cong (Q_{c'},i)\}$ is clopen in $\mathcal H$. We may assume that it is nonempty. Fix a base vertex $v_0 \in V(\Lambda)$. For any group $H \in \mathcal V_{(Q_{c'},i)}$, we can find a set of representatives $\mathcal F_0$  of the $H$-orbits of vertices and edges in $\Lambda$, in such a way that $v_0 \in \mathcal F_0$ and that $\mathcal F_0$ is connected.  Notice that there are only finitely many connected subsets $\mathcal F \subseteq V(\Lambda) \cup E(\Lambda)$ containing $v_0$ and such that $\# V(\mathcal F) = \#V(Q)$ and $\# E(\mathcal F) = \# E(Q)$. Let us enumerate all of them, namely $\mathcal F_0, \mathcal F_1, \dots, \mathcal F_m$. We have $\mathcal V_{(Q_{c'},i)} = \bigcup_{j=0}^m \mathcal V_{(Q_{c'},i), \mathcal F_j}$. Each $\mathcal V_{(Q_{c'},i), \mathcal F_j}$ is clopen by Claim~\ref{claim:VQ}, hence $\mathcal V_{(Q_{c'},i)}$ is clopen as well.

\medskip \noindent (2). 
We may identify $\Aut(\Lambda)$ with $\Aut((\Lambda^{(1)})_c)$, which acts without inversion on $\Lambda^{(1)}$. The desired assertion then follows from (1). 

\medskip \noindent (3).
Let $c \colon V(\Lambda^{(1)}) \to \{0,1\}$ be the coloring of $\Lambda^{(1)}$ as defined in (2). For any $C > 0$, there are finitely many edge-indexed (colored) graphs $(Q_{c'}, i)$ that can be isomorphic to $H \backslash (\Lambda^{(1)})_c$ for some $H \in \Sub(\Aut(\Lambda))_{\leq C}$. Moreover, given such a $(Q_{c'}, i)$, if $H' \in \Sub(\Aut(\Lambda))$ satisfies $H' \backslash (\Lambda^{(1)})_c \cong (Q_{c'},i)$ then $\#V(H' \backslash \Lambda) = \#V(H \backslash \Lambda) \leq C$. Indeed, $\#V(H' \backslash \Lambda)$ is equal to the number of vertices $v$ of $Q$ with $c'(v) = 0$. The conclusion then follows from (2).
\end{proof}

\section{The \texorpdfstring{$k$}{k}-closure of a graph automorphism group}\label{section:BEW}

Let $\Lambda$ be a locally finite connected graph. We define the \textbf{$k$-closure} $\closure{k}{J}$ of an automorphism group $J \leq \Aut(\Lambda)$ by $$\closure{k}{J} = \{g \in \Aut(\Lambda) \mid \forall v \in V(\Lambda), \exists h \in J : g|_{B(v,k)} = h|_{B(v,k)}\},$$
where $B(v,k)$ is the ball centered at $v$ and of radius $k$ in $\Lambda$. That notion was first introduced and studied by Banks--Elder--Willis in~\cite{Banks}, in the case where $\Lambda$ is a tree, even though they used the notation $J^{(k)}$ instead of $\closure{k}{J}$.

It is clear from the definition that $\closure{k}{J} \supseteq \closure{\ell}{J} \supseteq J$ for any $k \leq \ell$. Other basic properties of $k$-closures, due to Banks--Elder--Willis, are collected in the following lemma. 

\begin{lemma}\label{lemma:k-closure}
Let $\Lambda$ be a locally finite connected graph. For any $k \geq 0$ and $J \leq \Aut(\Lambda)$, $\closure{k}{J}$ is a closed subgroup of $\Aut(\Lambda)$. Moreover we have
$$\overline J = \bigcap_{k \geq 0} \closure{k}{J}.$$
\end{lemma}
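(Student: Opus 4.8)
The plan is to prove the two assertions of Lemma~\ref{lemma:k-closure} separately. For the first claim, that $\closure{k}{J}$ is a closed subgroup, I would first verify that it is a subgroup: the identity clearly lies in it, and if $g_1, g_2 \in \closure{k}{J}$ then for any vertex $v$ I can match $g_2$ on $B(v,k)$ by some $h_2 \in J$, and then match $g_1$ on the ball $B(g_2(v),k) \supseteq g_2(B(v,k))$ by some $h_1 \in J$, so that $h_1 h_2 \in J$ agrees with $g_1 g_2$ on $B(v,k)$; closure under inverses is symmetric, using that $g \in \closure{k}{J}$ agreeing with $h \in J$ on $B(v,k)$ forces $g^{-1}$ to agree with $h^{-1}$ on $B(g(v),k)$, and $v$ is arbitrary. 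The only mildly delicate point here is being careful that balls are transported correctly under automorphisms (an automorphism is an isometry for the graph metric, so $g(B(v,k)) = B(g(v),k)$), which makes the local-agreement conditions compose properly.

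\medskip

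Next I would show $\closure{k}{J}$ is closed in $\Aut(\Lambda)$ for the compact-open topology. The natural approach is to observe that the defining condition is a pointwise (vertex-by-vertex) condition: $\closure{k}{J} = \bigcap_{v \in V(\Lambda)} \{g \in \Aut(\Lambda) \mid \exists h \in J,\ g|_{B(v,k)} = h|_{B(v,k)}\}$. For each fixed $v$, the set $\{h|_{B(v,k)} \mid h \in J\}$ is a set of partial maps from the finite set $B(v,k)$ into $\Lambda$, and since $\Lambda$ is locally finite there are only finitely many such local patterns; the set of $g$ realizing any one fixed pattern on $B(v,k)$ is open (it is determined by finitely many vertex images, i.e.\ a basic compact-open condition), so the union over the finitely many admissible patterns is open, and in fact clopen. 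Thus each factor in the intersection is closed, whence $\closure{k}{J}$ is closed as an intersection of closed sets.

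\medskip

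For the identity $\overline{J} = \bigcap_{k \geq 0} \closure{k}{J}$, I would argue by double inclusion. The inclusion $\overline{J} \subseteq \bigcap_k \closure{k}{J}$ follows because $J \subseteq \closure{k}{J}$ and each $\closure{k}{J}$ is closed (by the previous paragraph), so the closure $\overline{J}$ is contained in each $\closure{k}{J}$. For the reverse inclusion, I would take $g \in \bigcap_k \closure{k}{J}$ and build a net (or, since $\Lambda$ is countable and the topology is metrizable, a sequence) in $J$ converging to $g$: fixing a base vertex $v_0$, for each $k$ the membership $g \in \closure{k}{J}$ gives some $h_k \in J$ agreeing with $g$ on $B(v_0,k)$, and since $\{B(v_0,k)\}_k$ exhausts $V(\Lambda)$, the $h_k$ converge to $g$ in the compact-open topology. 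Hence $g \in \overline{J}$.

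\medskip

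The step I expect to require the most care is the closedness of $\closure{k}{J}$: one must package the ``exists $h \in J$ agreeing on $B(v,k)$'' condition correctly, using local finiteness to reduce to finitely many local patterns so that each single-vertex condition becomes genuinely (cl)open rather than merely the image of a projection. Once that finiteness observation is in place the rest is routine, and the final identity is a short exhaustion-of-balls argument relying only on local finiteness and the description of the compact-open topology.
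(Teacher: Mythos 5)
Your overall structure is sound, and it is worth noting that the paper itself gives no direct argument here: it simply cites \cite{Banks}*{Proposition~3.4} and observes that the proofs there are independent of the tree structure, so a self-contained proof like yours is a genuinely different (and more informative) route. The group-law verification and the final exhaustion-of-balls argument for $\overline J = \bigcap_{k\geq 0}\closure{k}{J}$ are correct. However, the step you yourself flag as the delicate one contains a false claim: it is \emph{not} true that ``since $\Lambda$ is locally finite there are only finitely many such local patterns.'' Local finiteness bounds the size of $B(v,k)$, but says nothing about where a pattern lands: if $J$ contains a translation along a bi-infinite geodesic (e.g.\ $J = \Aut(T)$ for $T$ regular), the restrictions $h|_{B(v,k)}$ with $h \in J$ send $v$ to infinitely many distinct vertices, so the set of realized patterns is infinite. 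Your justification of closedness of $S_v := \{g \in \Aut(\Lambda) \mid \exists h \in J:\ g|_{B(v,k)} = h|_{B(v,k)}\}$ as a \emph{finite} union of clopen sets therefore does not apply, and an infinite union of clopen sets need not be closed in general.

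The repair is short and needs no finiteness. The restriction map $\rho_v \colon g \mapsto g|_{B(v,k)}$ is locally constant: by local finiteness, $\{g' \mid g'|_{B(v,k)} = g|_{B(v,k)}\}$ is a basic open neighborhood of $g$ (finitely many vertex conditions). Writing $S_v = \rho_v^{-1}(\mathcal P)$ with $\mathcal P = \{h|_{B(v,k)} \mid h \in J\}$, both $S_v$ and its complement $\rho_v^{-1}(\mathcal P^{\,c})$ are unions of such basic sets, hence open; so $S_v$ is clopen regardless of the cardinality of $\mathcal P$. (Equivalently, and even more directly: if $g \in \overline{S_v}$, then some $g' \in S_v$ agrees with $g$ on $B(v,k)$, since agreement on $B(v,k)$ defines a neighborhood of $g$; hence $g \in S_v$.) One further nitpick: in your closing sentence you attribute the exhaustion $V(\Lambda) = \bigcup_{k} B(v_0,k)$ to local finiteness, but it is \emph{connectedness} of $\Lambda$ that is used there. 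With these two corrections your argument is complete.
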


\begin{proof}
The proofs when $\Lambda$ is a locally finite tree are given in \cite{Banks}*{Proposition~3.4}, but they are independent from the tree structure and thus also work for any locally finite connected graph $\Lambda$.
\end{proof}

In view of Lemma~\ref{lemma:inter} (1), the previous lemma implies that $\closure{k}{J} \to \overline{J}$ in $\Sub({\Aut(\Lambda)})$. The next result is then a key tool for the proof of Theorem~\ref{theorem:simpleclosed}. In order to facilitate its statement, we introduce the following notation. Given a group $\Gamma \leq \Aut(\Lambda)$, we write
$$\Sub(\Aut(\Lambda))_{\geq \Gamma} = \{H \in \Sub(\Aut(\Lambda)) \mid H \geq \tau \Gamma \tau^{-1} \text{ for some } \tau \in \Aut(\Lambda)\}.$$
Observe that if the normalizer of $\Gamma$ in $\Aut(\Lambda)$ is cocompact, then $\Sub(\Aut(\Lambda))_{\geq \Gamma}$ is Chabauty-closed. Given a group $H \leq \Aut(\Lambda)$, a vertex $v \in V(\Lambda)$ and an integer $r \geq 0$, we also write $H_v^{[r]}$ for the pointwise stabilizer of the ball $B(v,r)$ in $H$.

\begin{proposition}\label{proposition:convergence_k}
Let $\Lambda$ be a locally finite connected graph, $\Gamma \leq \Aut(\Lambda)$ act cocompactly on $\Lambda$ and $H \in \Sub(\Aut(\Lambda))_{\geq \Gamma}$. Fix $v_0 \in V(\Lambda)$. Then for each $k \geq 0$, the set
$$V_k := \{J \in \Sub(\Aut(\Lambda))_{\geq \Gamma} \mid \sigma J \sigma^{-1} \leq \closure{k}{H} \text{ for some } \sigma \in \Aut(\Lambda)_{v_0}^{[k]} \}$$
is a neighborhood of $H$ in $\Sub(\Aut(\Lambda))_{\geq \Gamma}$.
\end{proposition}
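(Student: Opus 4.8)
The plan is to prove the statement sequentially. Since $\Aut(\Lambda)$ is second countable, $\Sub(\Aut(\Lambda))$ is metrizable, so $V_k$ is a neighborhood of $H$ in $\Sub(\Aut(\Lambda))_{\geq \Gamma}$ if and only if every sequence $J_n \to H$ in $\Sub(\Aut(\Lambda))_{\geq \Gamma}$ eventually lies in $V_k$. Note first that $H \in V_k$: take $\sigma = \id \in \Aut(\Lambda)_{v_0}^{[k]}$ and use $H \leq \closure{k}{H}$. Thus the task reduces to the following: given $J_n \to H$ with each $J_n \in \Sub(\Aut(\Lambda))_{\geq \Gamma}$, construct, for all large $n$, an element $\sigma_n \in \Aut(\Lambda)_{v_0}^{[k]}$ with $\sigma_n J_n \sigma_n^{-1} \leq \closure{k}{H}$.

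First I would set up the cocompactness bookkeeping. Let $m$ be the number of vertex-orbits of $\Gamma$; every group in $\Sub(\Aut(\Lambda))_{\geq \Gamma}$ contains a conjugate of $\Gamma$ and hence acts with at most $m$ vertex-orbits. For any such group $G$, the quotient $G \backslash \Lambda$ is a connected graph on at most $m$ vertices, so by lifting geodesics from the image of $v_0$ one sees that a ball $B(v_0, R_0)$ of radius $R_0$ depending only on $m$ meets every $G$-orbit. This yields a key reduction, which I would isolate as a lemma: for $G \in \Sub(\Aut(\Lambda))_{\geq \Gamma}$ one has $G \leq \closure{k}{H}$ as soon as $g|_{B(w,k)}$ is realized by an element of $H$ for every $g \in G$ and every $w \in B(v_0, R_0)$. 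Indeed, given arbitrary $g \in G$ and $v \in V(\Lambda)$, pick $\gamma \in G$ with $w := \gamma^{-1}(v) \in B(v_0, R_0)$; applying the hypothesis to the pairs $(\gamma, w)$ and $(g\gamma, w)$ produces $h_1, h_2 \in H$ with $(g\gamma)|_{B(w,k)} = h_1|_{B(w,k)}$ and $\gamma|_{B(w,k)} = h_2|_{B(w,k)}$, whence $g|_{B(v,k)} = (h_1 h_2^{-1})|_{B(v,k)}$. This localizes the whole problem to the fixed finite ball $B(v_0, R_0 + k)$.

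The heart of the argument is then to produce $\sigma_n$. Here Chabauty convergence enters through Lemma~\ref{lemma:converge}. Part (ii) gives, for large $n$, that every germ $h|_{B(v_0, R)}$ with $h \in H$ is realized by some element of $J_n$; part (i), combined with the compactness of the set $\{g \in \Aut(\Lambda) \mid g(v_0) \in B(v_0, R')\}$, gives conversely that every bounded-range germ of $J_n$ near $v_0$ is realized by $H$ (a given germ occurs for infinitely many $n$, and the realizing elements, which keep $g(v_0)$ in a fixed ball, lie in a compact set and subconverge to an element of $H$). Thus the pointed local actions of $J_n$ and of $H$ on a ball around $v_0$ agree for large $n$; the only discrepancy left is in the long-range germs, reflecting that $J_n$ and $H$ may glue their (finitely many, by cocompactness) local-action types slightly differently as one moves outward. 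The conjugator $\sigma_n$ is designed precisely to undo this positional drift: starting from $\sigma_n = \id$ on $B(v_0, k)$, I would extend it shell by shell, at each step using that a given local-action type of $J_n$ carries the same stabilizer permutation data as the matching type of $H$ (for $n$ large, by the convergence above) to choose the extension consistently; local finiteness together with the profiniteness of the compact group $\Aut(\Lambda)_{v_0}^{[k]}$ then assembles these partial alignments into a genuine $\sigma_n \in \Aut(\Lambda)_{v_0}^{[k]}$. By construction $\sigma_n J_n \sigma_n^{-1}$ satisfies the germ condition on $B(v_0, R_0)$, so the reduction of the previous paragraph gives $\sigma_n J_n \sigma_n^{-1} \leq \closure{k}{H}$.

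The main obstacle is this last construction of $\sigma_n$: one must match the local-action data of $J_n$ to that of $H$ not merely near $v_0$ but uniformly across all scales, and glue the resulting partial alignments into a single automorphism fixing $B(v_0, k)$. The subtlety is that Chabauty convergence is only a statement about germs of bounded range, so transporting the near-$v_0$ agreement outward is not automatic; it is exactly here that cocompactness — reducing the infinitely many germs to finitely many orbit-types, each occurring with identical permutation data in $J_n$ and in $H$ for large $n$ — must be combined with convergence to guarantee that a consistent shell-by-shell extension exists.
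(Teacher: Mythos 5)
Your opening reduction and your ``key reduction lemma'' are correct as far as they go (the computation $g|_{B(v,k)} = (h_1 h_2^{-1})|_{B(v,k)}$ does check out), but the proof has a genuine gap at exactly the point you flag yourself: the construction of $\sigma_n$ is never carried out, and the plan for it cannot work as described. Two concrete problems. First, your reduction lemma requires realizing in $H$ the germs $g|_{B(w,k)}$ for \emph{every} $g \in G$ and $w \in B(v_0, R_0)$ --- including the elements $\gamma$ and $g\gamma$ in your own argument, which move $w$ arbitrarily far (since $\gamma(w) = v$ is arbitrary). Chabauty convergence only controls elements of bounded displacement (the condition $J_n \cap K \subseteq HU$ concerns a compact $K$), so for a fixed $n$ it can never by itself verify this unbounded hypothesis, no matter how $\sigma_n$ is chosen; your shell-by-shell alignment would have to propagate the near-$v_0$ agreement to unbounded scales, and you give no mechanism for that: ``matching local-action types carrying the same stabilizer permutation data'' is neither defined nor established, and a bound on the number of orbits alone does not furnish such a matching, nor any consistency of the extensions from one shell to the next. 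Second, you never use the hypothesis $J_n \in \Sub(\Aut(\Lambda))_{\geq \Gamma}$ beyond an orbit count, whereas it is precisely this hypothesis that lets the paper sidestep your obstacle.

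The paper's proof obtains $\sigma_n$ essentially for free from the fixed lattice: choose $\tau_n$ with $\tau_n \Gamma \tau_n^{-1} \leq H_n$, precompose by elements of $\Gamma$ so that $\tau_n(v_0)$ lies in a compact fundamental domain $X$, extract a convergent subsequence $\tau_n \to \tau$, and set $\sigma_n = \tau \tau_n^{-1} \to \id$ (so eventually $\sigma_n \in \Aut(\Lambda)_{v_0}^{[k]}$). The payoff is that $\Gamma' = \tau \Gamma \tau^{-1}$ is then a \emph{common} cocompact subgroup of $H$ and of every $H'_n = \sigma_n H_n \sigma_n^{-1}$. For arbitrary $g \in H'_n$ and $v \in V(\Lambda)$, one picks $\gamma_1, \gamma_2 \in \Gamma'$ making $\gamma_1 g \gamma_2$ of displacement $\leq 2D$ at $v_0$, realizes its germ on $B(v_0, k+D)$ by some $h \in H$ using a single Chabauty neighborhood $\mathcal{V}_{K,U}(H)$ with $U = \Aut(\Lambda)_{v_0}^{[k+D]}$, and corrects with $\gamma_1^{-1}, \gamma_2^{-1}$, which lie in $H$. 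It is this common subgroup that converts the bounded-germ information Chabauty actually provides into the global containment $H'_n \leq \closure{k}{H}$; that conversion device is the missing idea in your proposal, and without it the step ``by construction $\sigma_n J_n \sigma_n^{-1}$ satisfies the germ condition on $B(v_0, R_0)$'' has no proof.
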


\begin{proof}
Consider a sequence $H_n \to H$ in $\Sub(\Aut(\Lambda))_{\geq \Gamma}$ and let us show that $H_n \in V_k$ for sufficiently large $n$. Let $X \subset \Lambda$ be a compact fundamental domain for the action of $\Gamma$ on $\Lambda$. For each $n$, let $\tau_n \in \Aut(\Lambda)$ be such that $H_n \geq \tau_n \Gamma \tau_n^{-1}$. We may assume, up to precomposing $\tau_n$ with an adequate element of $\Gamma$, that $\tau_n$ sends $v_0$ to a vertex in $X$. Since $X$ is compact, the sequence $(\tau_n)$ is bounded and we can further assume (by passing to a subsequence) that $(\tau_n)$ converges to some $\tau \in \Aut(\Lambda)$. Define $\sigma_n := \tau \tau_n^{-1}$ for each $n \geq 1$ so that $\sigma_n \to \id$.
In this way, we have
$$\Gamma' := \tau \Gamma \tau^{-1} \leq \tau \tau_n^{-1} H_n \tau_n \tau^{-1} = \sigma_n H_n \sigma_n^{-1} =: H'_n \quad \text{for each $n \geq 1$},$$
where $\Gamma'$ also acts cocompactly on $\Lambda$ with $X' := \tau(X)$ as a fundamental domain. Moreover, as $\sigma_n \to \id$, we have $H'_n \to H$ by Lemma~\ref{lemma:conjugate} and in particular $\Gamma' \leq H$.

In order to conclude, it suffices to find $N \geq 1$ such that $H'_n \leq \closure{k}{H}$ for each $n \geq N$. Let $D$ be the diameter of $X'$ and set
$$K := \{g \in \Aut(\Lambda) \mid d(g(v_0), v_0) \leq 2D\}$$
and
$$U := \Aut(\Lambda)_{v_0}^{[k+D]}.$$
The set $K$ is compact and the set $U$ is open, so there exists $N \geq 1$ such that $H'_n \in \mathcal{V}_{K,U}(H)$ for each $n \geq N$. In particular, we have $H'_n \cap K \subseteq HU$ for each $n \geq N$. This exactly means that, for any $g \in H'_n$ (with $n \geq N$) satisfying $d(g(v_0),v_0) \leq 2D$, there exists $h \in H$ such that $g|_{B(v_0,k+D)} = h|_{B(v_0,k+D)}$.

We need to show that $H'_n \leq \closure{k}{H}$ for each $n \geq N$. In order to do so, consider $g \in H'_n$ with $n \geq N$ and $v \in V(\Lambda)$. Let $\gamma_1 \in \Gamma'$ be such that $d(\gamma_1 g(v), v_0) \leq D$ and $\gamma_2 \in \Gamma'$ be such that $d(\gamma_2(v_0), v) \leq D$. Those elements exist because $D$ is the diameter of the fundamental domain $X'$ for the action of $\Gamma'$. The two previous inequalities imply that $d(\gamma_1g\gamma_2(v_0),v_0) \leq 2D$. Hence, by definition of $N$ there exists $h \in H$ with
$$\gamma_1g\gamma_2|_{B(v_0,k+D)} = h|_{B(v_0,k+D)},$$
which is equivalent to saying that
$$g|_{B(\gamma_2(v_0),k+D)} = \gamma_1^{-1} h \gamma_2^{-1}|_{B(\gamma_2(v_0),k+D)}.$$
But $d(\gamma_2(v_0), v) \leq D$, so $B(\gamma_2(v_0), k+D) \supseteq B(v,k)$ and
$$g|_{B(v,k)} = \gamma_1^{-1} h \gamma_2^{-1}|_{B(v,k)},$$
which is sufficient to conclude since $\gamma_1^{-1} h \gamma_2^{-1} \in H$.
\end{proof}

The following observation describes a local algebraic property that is preserved when taking the $k$-closure (with a sufficiently large $k$).

\begin{proposition}\label{proposition:locallyProPi}
Let $\Lambda$ be a locally finite connected graph and $H \in \Sub(\Aut(\Lambda))$. Let also $\pi$ be a set of primes and $r \geq 0$. Suppose that $H_v^{[r]}$ is a pro-$\pi$ group for all $v \in V(\Lambda)$. Then for each $k \geq r+1$, the group $(\closure{k}{H})_v^{[r]}$ is a pro-$\pi$ group for all $v \in V(\Lambda)$.

In particular, if $H$ acts cocompactly on $\Lambda$ and has an open pro-$\pi$ subgroup, then so does $\closure{k}{H}$ for all sufficiently large $k$. 
\end{proposition}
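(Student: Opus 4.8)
The plan is to use that $G := (\closure{k}{H})_v^{[r]}$ is a closed subgroup of the compact vertex stabiliser $\Aut(\Lambda)_v$, hence profinite, and that the pointwise ball-stabilisers $(\closure{k}{H})_v^{[m]}$ ($m \geq r$) form a neighbourhood basis of the identity consisting of open normal subgroups of $G$. Thus $G$ is pro-$\pi$ if and only if each finite quotient $G/(\closure{k}{H})_v^{[m]}$ — equivalently, the finite group induced by $G$ on the ball $B(v,m)$ — is a $\pi$-group. Since $G/(\closure{k}{H})_v^{[m]}$ is an iterated extension of the successive ``shells'' $(\closure{k}{H})_v^{[j]}/(\closure{k}{H})_v^{[j+1]}$ for $r \leq j < m$, and the class of finite $\pi$-groups is closed under extensions, I would reduce the statement to showing that each such shell is a $\pi$-group.

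First I would treat the radii up to $k$. The crucial observation comes straight from the definition of the $k$-closure applied at $v$ itself: any $g \in G$ agrees on $B(v,k)$ with some $h \in H$, and since $g$ fixes $B(v,r)$ pointwise and $k \geq r+1 > r$, this forces $h \in H_v^{[r]}$. Conversely $H_v^{[r]} \leq G$. Hence the finite group induced by $G$ on $B(v,k)$ coincides with the one induced by $H_v^{[r]}$, which is a finite continuous quotient of the pro-$\pi$ group $H_v^{[r]}$, so a $\pi$-group. Every shell with $r \leq j < k$ is a subquotient of this group, hence a $\pi$-group as well.

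The heart of the argument is the propagation to radii $m \geq k$. Writing $S(v,m)$ for the set of vertices at distance exactly $m$ from $v$, I would consider the restriction homomorphism
$$\Phi \colon (\closure{k}{H})_v^{[m]} \longrightarrow \prod_{w \in S(v,m+1-k)} \Aut(B(w,k)), \qquad g \longmapsto \big(g|_{B(w,k)}\big)_w,$$
which is well defined because each $g$ in the source fixes $B(v,m) \supseteq S(v,m+1-k)$ pointwise, hence stabilises each $B(w,k)$. Two claims would then finish the shell: (i) the image of $\Phi$ is a $\pi$-group, and (ii) $\ker \Phi \subseteq (\closure{k}{H})_v^{[m+1]}$. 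For (i), fix $w \in S(v,m+1-k)$; the inequality $k \geq r+1$ guarantees $B(w,r) \subseteq B(v,(m+1-k)+r) \subseteq B(v,m)$, so $g$ fixes $B(w,r)$ pointwise, and the defining property of $\closure{k}{H}$ at $w$ yields $h_w \in H$ with $g|_{B(w,k)} = h_w|_{B(w,k)}$ and $h_w \in H_w^{[r]}$; thus the $w$-component of $\Phi(g)$ lies in the $\pi$-group induced by the pro-$\pi$ group $H_w^{[r]}$ on $B(w,k)$, and a finite product of $\pi$-groups is a $\pi$-group. For (ii), every $u \in S(v,m+1)$ lies on a geodesic $v = x_0,\dots,x_{m+1}=u$ whose vertex $w := x_{m+1-k} \in S(v,m+1-k)$ satisfies $d(w,u) \leq k$, so the balls $B(w,k)$ cover $S(v,m+1)$; an element of $\ker\Phi$ therefore fixes $S(v,m+1)$ in addition to $B(v,m)$, i.e.\ lies in $(\closure{k}{H})_v^{[m+1]}$. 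Consequently the shell is a quotient of the $\pi$-group $\Phi\big((\closure{k}{H})_v^{[m]}\big)$, hence a $\pi$-group. Only the graph metric is used here, so the argument is valid for arbitrary locally finite connected $\Lambda$.

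For the final ``in particular'' assertion, I would argue that an open pro-$\pi$ subgroup $O \leq H$ contains $H_w^{[s]}$ for all sufficiently large $s$ (the ball-stabilisers being a neighbourhood basis of $1$), so $H_w^{[s]}$ is pro-$\pi$; cocompactness provides finitely many $H$-orbits of vertices, allowing a single $s$ to work for a set of orbit representatives, after which conjugation spreads pro-$\pi$-ness of $H_w^{[s]}$ to every vertex $w$ with a uniform $s$. Applying the first part with $r = s$ shows that $(\closure{k}{H})_v^{[s]}$ — an open subgroup of $\closure{k}{H}$ — is pro-$\pi$ for all $k \geq s+1$. I expect the main obstacle to be exactly the propagation step of the third paragraph: ensuring pro-$\pi$-ness survives arbitrarily far from $v$. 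This is where the hypothesis $k \geq r+1$ is indispensable (it is what forces the auxiliary elements $h_w$ realising $g$ on $B(w,k)$ to lie in the controlled subgroups $H_w^{[r]}$), and it is also why the hypothesis must hold at \emph{every} vertex, which in turn explains the necessity of cocompactness in the concluding statement.
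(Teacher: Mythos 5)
Your proof is correct and takes essentially the same route as the paper: both reduce pro-$\pi$-ness of $(\closure{k}{H})_v^{[r]}$ to showing that each successive quotient $(\closure{k}{H})_v^{[m]}/(\closure{k}{H})_v^{[m+1]}$ is a finite $\pi$-group, and both derive this from the defining property of the $k$-closure applied at a vertex $w$ at distance $m+1-k$ from $v$, where the hypothesis $k \geq r+1$ forces the approximating element of $H$ to lie in $H_w^{[r]}$ (your treatment of the ``in particular'' clause via cocompactness and conjugation likewise matches the paper's). The only differences are presentational: the paper first reduces to $k = r+1$ using $\closure{k}{H} \leq \closure{r+1}{H}$ and argues by contradiction with a single element of prime order $p \notin \pi$, whereas you prove the same shell statement directly through the restriction homomorphism $\Phi$ over the whole sphere $S(v,m+1-k)$, which is why you need the separate small-radius step ($r \leq j < k$) that the paper's reduction renders unnecessary.
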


\begin{proof}
Since $\closure{k}{H} \leq \closure{\ell}{H}$ for all $k \geq \ell$ and since a closed subgroup of a pro-$\pi$ group is pro-$\pi$, it suffices to consider $G = \closure{r+1}{H}$. We show that for each $n \geq r$ and each $v \in V(\Lambda)$, the finite group $\bigslant{G_v^{[n]}}{G_v^{[n+1]}}$ is a $\pi$-group. This assertion implies the required conclusion.

Fix $n \geq r$ and $v \in V(\Lambda)$ and assume for a contradiction that $\bigslant{G_v^{[n]}}{G_v^{[n+1]}}$ is not a $\pi$-group. Then it contains an element $g$ of prime order $p$, with $p \not \in \pi$. There exists a vertex $z$ with $d(v,z) = n$ such that the restriction $g|_{B(z,1)}$ contains a $p$-cycle. Let $x$ be a vertex on a geodesic path from $v$ to $z$ such that $d(x,z) = r$. Thus $g$ fixes $B(x,r) \subseteq B(v,n)$ pointwise. Since $g \in G = \closure{r+1}{H}$, there is $h \in H$ such that $g|_{B(x,r+1)} = h|_{B(x,r+1)}$. Hence $h$ belongs to $H_x^{[r]}$ and the image of $h$ modulo $H_x^{[r+1]}$ is of order $p \not \in \pi$. This contradicts the hypothesis that $H_x^{[r]}$ is pro-$\pi$.

Now suppose $H$ acts cocompactly on $\Lambda$ and has an open pro-$\pi$ subgroup $U$. Since $U$ is open, there exists $v_0 \in V(\Lambda)$ and $r \geq 0$ such that $H_{v_0}^{[r]} \subseteq U$. Let $X \ni v_0$ be a compact fundamental domain for the action of $H$ on $\Lambda$, and denote by $D$ its diameter. For each vertex $x \in X$, we have
$$H_x^{[r+D]} \subseteq H_{v_0}^{[r]} \subseteq U,$$
so $H_x^{[r+D]}$ is a pro-$\pi$ group. Since $X$ is a fundamental domain for the action of $H$ on $\Lambda$, we even have that $H_v^{[r+D]}$ is a pro-$\pi$ group for all $v \in V(\Lambda)$. By the previous assertion, this implies that $\closure{k}{H}$ has an open pro-$\pi$ subgroup for each $k \geq r+D+1$. 
\end{proof}

Applying the previous proposition in the case of the empty set of primes, we obtain the following corollary for discrete groups. 

\begin{corollary}\label{corollary:discrete}
Let $\Lambda$ be a locally finite connected graph and $H$ be a discrete subgroup of $\Aut(\Lambda)$ acting cocompactly on $\Lambda$. Then $H = \closure{k}{H}$ for all sufficiently large $k$.
\end{corollary}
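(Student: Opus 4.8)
The plan is to deduce the statement directly from Proposition~\ref{proposition:locallyProPi} applied with the empty set of primes $\pi = \varnothing$, combined with the formula $\overline{J} = \bigcap_{k \geq 0} \closure{k}{J}$ from Lemma~\ref{lemma:k-closure}. The first observation is purely formal: a pro-$\varnothing$ group is trivial, since the only finite group whose order is divisible by no prime is the trivial group. Hence ``having an open pro-$\varnothing$ subgroup'' is synonymous with ``being discrete''. Now $H$ is a discrete subgroup of the Hausdorff group $\Aut(\Lambda)$, so it is closed and $\overline{H} = H$; moreover the trivial subgroup $\{1\}$ is an open pro-$\varnothing$ subgroup of $H$. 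Since $H$ acts cocompactly, the ``in particular'' part of Proposition~\ref{proposition:locallyProPi} supplies an integer $K$ such that $\closure{k}{H}$ has an open pro-$\varnothing$ subgroup — equivalently, is discrete — for every $k \geq K$.

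It remains to upgrade discreteness of $\closure{k}{H}$ to the equality $\closure{k}{H} = H$, and this is where I would spend the real effort. For $k \geq K$ the group $\closure{k}{H}$ is discrete, hence has finite vertex-stabilizers, and it contains $H$, which acts with finitely many orbits of vertices; a standard counting argument then gives $[\closure{k}{H} : H] < \infty$ for all $k \geq K$. Since $\closure{k+1}{H} \leq \closure{k}{H}$ with both groups containing $H$, the indices $d_k := [\closure{k}{H} : H]$ form a non-increasing sequence of positive integers, hence stabilize, say $d_k = d$ for all $k \geq K'$. For such $k$ the inclusion $\closure{k+1}{H} \leq \closure{k}{H}$ between two overgroups of $H$ of the same finite index forces $\closure{k+1}{H} = \closure{k}{H}$, so the tower is constant, equal to some fixed $L \supseteq H$, for all $k \geq K'$.

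Finally, Lemma~\ref{lemma:k-closure} gives $\bigcap_{k \geq 0} \closure{k}{H} = \overline{H} = H$, whereas the eventual constancy just established gives $\bigcap_{k \geq 0} \closure{k}{H} = L$. Therefore $L = H$, i.e.\ $\closure{k}{H} = H$ for all $k \geq K'$, which is the assertion. The one genuinely non-formal step is this last upgrade: discreteness of $\closure{k}{H}$ on its own does not rule out $\closure{k}{H}$ being a proper finite-index overgroup of $H$, so both ingredients are indispensable — cocompactness to make the index finite, and the intersection formula of Lemma~\ref{lemma:k-closure} to drive that finite index down to $1$.
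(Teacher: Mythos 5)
Your proof is correct and follows essentially the same route as the paper: applying Proposition~\ref{proposition:locallyProPi} with $\pi = \varnothing$ to get discreteness of $\closure{k}{H}$ for large $k$, using cocompactness to bound $[\closure{k}{H} : H]$, and invoking the intersection formula of Lemma~\ref{lemma:k-closure} (together with $\overline{H} = H$) to force the eventually constant descending tower down to $H$. The only difference is presentational: you spell out the stabilization of the indices $d_k$, whereas the paper compresses this into its final sentence.
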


\begin{proof}
Applying Proposition~\ref{proposition:locallyProPi} to the empty set $\pi = \varnothing$, we obtain that $\closure{k}{H}$ is discrete for each sufficiently large $k$. Since $H$ acts cocompactly on $\Lambda$, so does $\closure{k}{H}$ for any $k$. Fixing $k_0$ such that $\closure{k_0}{H}$ is discrete, we deduce that the index of $H$ in $\closure{k_0}{H}$ is finite. Since $H \leq \closure{k+1}{H} \leq \closure{k}{H} \leq \closure{k_0}{H}$ for each $k \geq k_0$, the conclusion follows from Lemma~\ref{lemma:k-closure}. 
\end{proof}

\section{Finite quotients of groups acting on graphs}

The goal of this section is to prove Theorem~\ref{theorem:simpleclosed}. 

We first recall that, for a topological group $G$, the symbol $G^{(\infty)}$ denotes the intersection of all open subgroups of finite index of $G$. The following lemma is classical. The notation $P \leq_{\text{ofi}} G$ means that $P$ is an open subgroup of finite index of $G$.

\begin{lemma}\label{lemma:normal}
Let $G$ be a topological group and $P \leq_{\text{ofi}} G$. There exists $R \leq P$ such that $R \unlhd_{\text{ofi}} G$. In particular, $G^{(\infty)}$ coincides with the intersection of all open normal subgroups of finite index of $G$.
\end{lemma}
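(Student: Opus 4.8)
The plan is to construct $R$ as the \emph{core} of $P$ in $G$, namely the intersection of all $G$-conjugates of $P$, and then to verify that this intersection is in fact \emph{finite}, so that it remains open of finite index.

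First I would observe that $P$ has only finitely many distinct conjugates in $G$. Indeed, $G$ acts by conjugation on the set $\{gPg^{-1} \mid g \in G\}$, and the stabilizer of $P$ under this action is its normalizer $N_G(P)$, which contains $P$. Hence the number of conjugates equals $[G : N_G(P)] \leq [G : P] < \infty$. Next I would set $R := \bigcap_{g \in G} gPg^{-1}$, which by the previous remark is a finite intersection of distinct subgroups. Each conjugate $gPg^{-1}$ is open, since conjugation by $g$ is a homeomorphism of $G$, and satisfies $[G : gPg^{-1}] = [G:P]$. A finite intersection of open subgroups of finite index is again open of finite index, so $R \leq_{\text{ofi}} G$. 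Taking $g = e$ shows $R \leq P$, while the reindexing $hRh^{-1} = \bigcap_{g} (hg)P(hg)^{-1} = R$ for every $h \in G$ shows $R \unlhd G$. This establishes the first assertion.

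For the ``in particular'' clause I would compare the two intersections directly. Let $A$ denote the intersection of all open subgroups of finite index (so $A = G^{(\infty)}$ by definition) and let $B$ denote the intersection of all open \emph{normal} subgroups of finite index. Since every open normal finite-index subgroup is in particular open of finite index, $A \subseteq B$. Conversely, for each $P \leq_{\text{ofi}} G$ the first assertion provides $R \unlhd_{\text{ofi}} G$ with $R \leq P$, whence $B \subseteq R \subseteq P$; intersecting over all such $P$ yields $B \subseteq A$. Therefore $A = B$, as required.

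The argument is entirely elementary, and the only place where the finite-index hypothesis is used in an essential way is the finiteness of the number of conjugates in the first step. This is precisely what upgrades the core $R$ from a mere closed normal subgroup to an open normal subgroup of finite index, so I expect no genuine obstacle beyond recording that finiteness carefully.
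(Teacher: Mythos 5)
Your proposal is correct and coincides with the paper's proof: the paper takes $R$ to be the kernel of the natural $G$-action on $G/P$, which is exactly the normal core $\bigcap_{g \in G} gPg^{-1}$ you construct, and your verification (finitely many conjugates, openness and finite index of the finite intersection) simply makes explicit the details the paper leaves to the reader. The ``in particular'' clause is handled the same way in both arguments.
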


\begin{proof}
It suffices to take for $R$ the kernel of the natural action of $G$ on $G/P$.
\end{proof}

The next result shows, in the context of automorphism groups of graphs, how $k$-closures preserve open subgroups of finite index.

\begin{lemma}\label{lemma:ofi}
Let $\Lambda$ be a locally finite connected graph and $H \in \Sub(\Aut(\Lambda))$ act cocompactly on $\Lambda$. If $P \leq_{\text{ofi}} H$, then $[\closure{k}{H} : \closure{k}{P}] \leq [H : P]$ for all sufficiently large $k$.
\end{lemma}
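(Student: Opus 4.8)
The plan is to produce, for all sufficiently large $k$, a surjective homomorphism $\rho \colon \closure{k}{H} \to H/R$, where $R$ is a suitable open normal subgroup of finite index contained in $P$, and then to identify the preimage of $P/R$ with $\closure{k}{P}$. This will yield not merely the stated inequality but the equality $[\closure{k}{H} : \closure{k}{P}] = [H:P]$.

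First I would reduce to a normal subgroup. By Lemma~\ref{lemma:normal} there is $R \unlhd_{\text{ofi}} H$ with $R \le P$. Being open, $R$ contains $H_{v_0}^{[s]}$ for some base vertex $v_0$ and some $s \ge 0$. Using cocompactness exactly as in the proof of Proposition~\ref{proposition:locallyProPi} — if $X \ni v_0$ is a compact fundamental domain of diameter $D$, then $H_x^{[s+D]} \le H_{v_0}^{[s]} \le R$ for $x \in X$, and conjugating by an element carrying an arbitrary vertex into $X$, together with the normality of $R$ — I would upgrade this to $H_v^{[s']} \le R$ for all $v \in V(\Lambda)$, where $s' := s+D$. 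The crucial consequence is that, for any vertex $v$, the coset $hR$ of an element $h \in H$ is completely determined by the restriction $h|_{B(v,s')}$, since two elements of $H$ agreeing on $B(v,s')$ differ by an element of $H_v^{[s']} \le R$.

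Next, fix $k \ge s'+1$ and define $\rho \colon \closure{k}{H} \to H/R$ as follows. For $w \in \closure{k}{H}$ and a vertex $v$, choose $h_v \in H$ with $w|_{B(v,k)} = h_v|_{B(v,k)}$ and set $\rho_v(w) := h_v R$; this is well defined by the previous paragraph. The key point is that $\rho_v(w)$ does not depend on $v$: for adjacent vertices $v, v'$ one has $B(v',s') \subseteq B(v,k)$, so the witness $h_v$ also computes $\rho_{v'}(w)$, and connectedness of $\Lambda$ propagates the equality to all vertices, giving a well-defined value $\rho(w)$. I would then check that $\rho$ is a homomorphism: given $w, w' \in \closure{k}{H}$ and a vertex $v$, pick $h'$ witnessing $w'$ at $v$ and $h$ witnessing $w$ at $w'(v)$; since $w'$ maps $B(v,k)$ isometrically onto $B(w'(v),k)$, one computes $ww'|_{B(v,k)} = hh'|_{B(v,k)}$, whence $\rho(ww') = hh'R = \rho(w)\rho(w')$ (here normality of $R$ is what makes $H/R$ a group). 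Restricting $\rho$ to $H$ recovers the natural projection $H \to H/R$, so $\rho$ is surjective.

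Finally I would read off the indices from the definition of $\rho$. Unwinding the definitions shows $\ker \rho = \closure{k}{R}$ and, more to the point, $\rho^{-1}(P/R) = \closure{k}{P}$: a $w \in \closure{k}{H}$ lies in $\rho^{-1}(P/R)$ exactly when its local witnesses $h_v$ can be chosen in $P$, which is precisely the condition $w \in \closure{k}{P}$. Consequently $\closure{k}{P} \le \closure{k}{H}$ and $[\closure{k}{H} : \closure{k}{P}] = [H/R : P/R] = [H:P]$, in particular yielding the claimed inequality. The main obstacle is the construction of $\rho$: verifying both that $\rho_v(w)$ is vertex-independent (which needs $H_v^{[s']} \le R$ uniformly in $v$, hence cocompactness and the normality upgrade) and that $\rho$ is multiplicative despite automorphisms moving balls about. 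Once $\rho$ is in hand, the index computation is purely formal.
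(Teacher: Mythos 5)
Your proof is correct, and it takes a genuinely different route from the paper's. The paper stays with left cosets of $P$ throughout: writing $H = \bigsqcup_{i=1}^m h_i P$, it shows that for $k \geq r_0 + D + 1$ (where $H_{v_0}^{[r_0]} \subseteq P$ and $D$ is the diameter of a compact fundamental domain for the $P$-action) the index $i$ in a local factorization $g|_{B(v,k)} = h_i x|_{B(v,k)}$ with $x \in P$ agrees at adjacent vertices, hence is constant by connectedness, giving $\closure{k}{H} = \bigcup_{i=1}^m h_i\closure{k}{P}$; the conjugation into the fundamental domain there is performed by an element of $P$ itself, so no normal subgroup ever enters. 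You instead normalize first via Lemma~\ref{lemma:normal} and use cocompactness plus normality to get the uniform containment $H_v^{[s']} \leq R$ for \emph{every} vertex $v$, which is exactly what turns the locally constant coset assignment into a bona fide homomorphism $\rho \colon \closure{k}{H} \to H/R$. I verified the delicate points: well-definedness and vertex-independence (your threshold $k \geq s'+1$ is precisely what gives $B(v',s') \subseteq B(v,k)$ for adjacent $v, v'$), multiplicativity (correct as written --- taking the witness for $w$ on the ball around $w'(v)$ is the right move, and normality of $R$ makes $H/R$ a group), and the identification $\rho^{-1}(P/R) = \closure{k}{P}$ (correct: since $R \leq P$, any local witness of an element of $\rho^{-1}(P/R)$ automatically lies in $P$). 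Your approach buys strictly more than the statement: the equality $[\closure{k}{H} : \closure{k}{P}] = [H:P]$, together with $\ker \rho = \closure{k}{R}$, i.e.\ $\closure{k}{H}/\closure{k}{R} \cong H/R$. This is a real gain over the paper's argument, which by itself yields only the inequality --- a priori two distinct cosets $h_i P \neq h_j P$ could satisfy $h_i \closure{k}{P} = h_j \closure{k}{P}$, since $h_j^{-1}h_i \in \closure{k}{P}$ does not visibly force $h_j^{-1}h_i \in P$; your homomorphism rules this collapse out for $k \geq s + D + 1$. What the paper's packaging buys in return is economy (no passage to the normal core, no quotient to construct), but both proofs ultimately rest on the same two pillars: cocompactness, to make a ball-stabilizer containment uniform over all vertices, and connectedness of $\Lambda$, to propagate a local coset datum to a global one.
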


\begin{proof}
Fix $v_0 \in V(\Lambda)$ and let $m = [H : P]$. We can write
$$H = \bigsqcup_{i=1}^m h_i P$$
for some $h_1, \ldots, h_m \in H$. Since $H$ acts cocompactly on $\Lambda$, the action of $P$ on $\Lambda$ is also cocompact. Let $X \subseteq \Lambda$ be a compact fundamental domain for the action of $P$ and denote by $D$ the diameter of $X$.

The fact that $P$ is an open subgroup of $H$ implies that there exists $R \geq 0$ with
$$H_{v_0}^{[R]} \subseteq P.$$
We claim that $[\closure{k}{H} : \closure{k}{P}] \leq m$ for each $k \geq R+D+1$. To prove the claim, we fix $k \geq R+D+1$ and show that
$$\closure{k}{H} = \bigcup_{i=1}^m h_i \closure{k}{P}.$$
Take $g \in \closure{k}{H}$ and $v \in V(\Lambda)$. There exists $i \in \{1, \ldots, m\}$ and $x \in P$ such that $g |_{B(v, k)} = h_i x |_{B(v, k)}$, which is equivalent to saying that $h_i^{-1} g |_{B(v, k)} = x |_{B(v, k)}$.
If we prove that $i$ is independent of the choice of $v$, then we will get $h_i^{-1} g \in \closure{k}{P}$ which will end the proof. Since $\Lambda$ is connected, it suffices to show that the value of $i$ is the same for any two adjacent vertices. Fix $v$ and $v'$ two neighboring vertices of $\Lambda$ and suppose that
$$g |_{B(v, k)} = h_i x |_{B(v, k)} \quad \text{and} \quad g |_{B(v', k)} = h_j y |_{B(v', k)}$$
for some $x, y \in P$ and some $i, j \in \{1, \ldots, m\}$. It follows that
$$h_i x |_{B(v, k-1)} = h_j y |_{B(v, k-1)}$$
or equivalently that
$$h_j^{-1} h_i x y^{-1} |_{B(y(v), k-1)} = \id |_{B(y(v), k-1)}.$$
The element $e := h_j^{-1} h_i x y^{-1}$ is thus such that $e \in H_{y(v)}^{[k-1]}$. As $X$ is a fundamental domain (with diameter $D$) for the action of $P$ on $\Lambda$, there exists $p \in P$ such that $p(y(v)) \in B(v_0, D)$. Hence, the element $pep^{-1}$ satisfies
$$pep^{-1} \in H_{p(y(v))}^{[k-1]} \subseteq H_{v_0}^{[k-1-D]} \subseteq H_{v_0}^{[R]} \subseteq P.$$
We get $h_j^{-1} h_i x y^{-1} = e \in P$ and thus $h_{j}^{-1} h_i \in P$, which implies that $i = j$ as desired.
\end{proof}

Before proving Theorem~\ref{theorem:simpleclosed}, we still need a technical lemma.

\begin{lemma}\label{lemma:index}
Let $\Lambda$ be a locally finite connected graph and $H_n \to H$, $L_n \to L$ be two converging sequences in $\Sub(\Aut(\Lambda))$ such that $L_n \leq H_n$ for each $n \geq 1$. Assume that there exists $C > 0$ such that $H_n \in \Sub(\Aut(\Lambda))_{\leq C}$ for each $n \geq 1$. Suppose also that there exists $S \geq 1$ such that $[H_n : L_n] \leq S$ for each $n \geq 1$. Then $L \leq H$ and $[H : L] \leq S$.
\end{lemma}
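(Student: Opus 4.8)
The plan is to establish the two conclusions separately, both as direct consequences of the sequential description of Chabauty convergence in Lemma~\ref{lemma:converge}. The inclusion $L \le H$ is the easy half. Given $\ell \in L$, condition~(ii) of Lemma~\ref{lemma:converge} applied to the sequence $L_n \to L$ produces elements $\ell_n \in L_n$ with $\ell_n \to \ell$. Since $\ell_n \in L_n \le H_n$ and $H_n \to H$, condition~(i) applied to $H_n \to H$ forces $\ell \in H$. Hence $L \le H$, with no further hypotheses needed.

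For the index bound I would argue by contradiction. Suppose $[H:L] > S$ and choose $g_0, \dots, g_S \in H$ lying in pairwise distinct left cosets of $L$, so that $g_a^{-1} g_b \notin L$ whenever $a \neq b$. By condition~(ii) applied to $H_n \to H$, each $g_i$ is the limit of a sequence $g_i^{(n)}$ with $g_i^{(n)} \in H_n$. Now fix $n$: the $S+1$ elements $g_0^{(n)}, \dots, g_S^{(n)}$ of $H_n$ cannot lie in $S+1$ distinct cosets of $L_n$, since $[H_n : L_n] \le S$. By the pigeonhole principle there is a pair $a_n < b_n$ with $g_{a_n}^{(n)} L_n = g_{b_n}^{(n)} L_n$, equivalently $(g_{a_n}^{(n)})^{-1} g_{b_n}^{(n)} \in L_n$.

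There are only finitely many index pairs $(a,b)$ with $0 \le a < b \le S$, so one such pair $(a,b)$ recurs for infinitely many $n$; I pass to the corresponding subsequence. Along it we have $(g_a^{(n)})^{-1} g_b^{(n)} \in L_n$, while continuity of inversion and multiplication in the topological group $\Aut(\Lambda)$ gives $(g_a^{(n)})^{-1} g_b^{(n)} \to g_a^{-1} g_b$. The subsequence version of condition~(i) in Lemma~\ref{lemma:converge}, applied to $L_n \to L$, then forces $g_a^{-1} g_b \in L$, contradicting the choice of the $g_i$. This contradiction establishes $[H:L] \le S$.

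The one delicate point is the bookkeeping in the middle step: the coinciding pair supplied by the pigeonhole principle depends a priori on $n$, so the argument only closes after extracting a subsequence on which that pair is constant and then invoking the \emph{subsequence} form of Lemma~\ref{lemma:converge}(i) rather than the naive statement for the full sequence. I would also remark that the orbit bound $H_n \in \Sub(\Aut(\Lambda))_{\le C}$ does not appear to be needed for this argument itself; it presumably serves only to guarantee, via Proposition~\ref{proposition:CLOPEN}(3) (which asserts that $\Sub(\Aut(\Lambda))_{\le C}$ is closed), that the limit $H$ again lies in $\Sub(\Aut(\Lambda))_{\le C}$, as required by the results that invoke this lemma.
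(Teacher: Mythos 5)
Your proof is correct, and it takes a genuinely different route from the paper's. The paper argues constructively: it picks coset representatives $F_n \subseteq \Aut(\Lambda)$ with $H_n = L_n F_n$ and $|F_n| \leq S$, and this is precisely where the hypothesis $H_n \in \Sub(\Aut(\Lambda))_{\leq C}$ does real work --- it gives $L_n \in \Sub(\Aut(\Lambda))_{\leq CS}$, so each representative can be adjusted by an element of $L_n$ to move a fixed base vertex $v_0$ by at most $CS$, confining all the representatives $f_i^{(n)}$ to a single compact subset of $\Aut(\Lambda)$; after extracting convergent subsequences $f_i^{(n)} \to f_i$, the paper verifies $H = LF$ with $F = \{f_1, \dots, f_S\}$, again via Lemma~\ref{lemma:converge}. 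Your pigeonhole-and-contradiction argument sidesteps the compactness step entirely: you never need convergent coset representatives, only that one fixed pair among the $S+1$ approximating sequences falls into a common $L_n$-coset along a subsequence, after which the subsequence form of Lemma~\ref{lemma:converge}\,(i) --- which you correctly flag as the one delicate point --- closes the argument. Consequently your proof is both more elementary and strictly more general: it works for arbitrary converging sequences $H_n \to H$, $L_n \to L$ in $\Sub(G)$ for any second countable locally compact group $G$, with no graph structure and no orbit bound, whereas the paper's method is constructive and additionally produces explicit limit coset representatives. One small correction to your closing remark: the hypothesis $H_n \in \Sub(\Aut(\Lambda))_{\leq C}$ is not there merely so that downstream results can conclude $H \in \Sub(\Aut(\Lambda))_{\leq C}$ via Proposition~\ref{proposition:CLOPEN}\,(3); it is genuinely consumed inside the paper's own proof of Lemma~\ref{lemma:index} (for the compactness of representatives described above). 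Your argument shows that this hypothesis is in fact unnecessary for the statement itself.
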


\begin{proof}
The fact that $L \leq H$ is clear. For each $n \geq 1$, let $F_n \subseteq \Aut(\Lambda)$ be such that $H_n = L_n F_n$ and $|F_n| \leq S$. We directly get that $L_n \in \Sub(\Aut(\Lambda))_{\leq CS}$ for each $n \geq 1$. If $v_0 \in V(\Lambda)$ is a fixed vertex, for each $n \geq 1$ and $f \in F_n$ we can thus assume that $d(f(v_0),v_0) \leq CS$. By adding elements to $F_n$ if necessary, we can also suppose that $|F_n| = S$ and write $F_n = \{f_1^{(n)}, \ldots, f_S^{(n)}\}$. Since the set $\{g \in \Aut(\Lambda) \mid d(g(v_0),v_0) \leq CS\}$ is compact, we can finally assume by passing to subsequences that $(f_i^{(n)})$ converges to some $f_i \in \Aut(\Lambda)$ for each $i \in \{1, \ldots, S\}$. Define $F := \{f_1, \ldots, f_S\}$. It is clear that $F \subseteq H$ and we claim that $H = LF$. Take $h \in H$. By Lemma~\ref{lemma:converge}, there exists a converging sequence $h_n \to h$ with $h_n \in H_n$ for each $n \geq 1$. As $H_n = L_n F_n$, we can write $h_n = \ell_n f_{i_n}^{(n)}$ with $\ell_n \in L_n$ and $i_n \in \{1,\ldots,S\}$. There is a subsequence $(i_{k(n)})$ of $(i_n)$ which is constant, say equal to $j \in \{1, \ldots, S\}$. Then $h_{k(n)} = \ell_{k(n)} f_j^{(n)}$ and hence $\ell_{k(n)} = h_{k(n)} (f_j^{(n)})^{-1} \to hf_j^{-1}$. This limit belongs to $L$, so $hf_j^{-1} = \ell \in L$ and $h = \ell f_j$.
\end{proof}

The proof of Theorem~\ref{theorem:simpleclosed} is now an easy combination of the previous results.

\begin{proof}[Proof of Theorem~\ref{theorem:simpleclosed}]
Let $S = \limsup_{n\to \infty}\, [H_n : H_n^{(\infty)}]$. Without loss of generality, we may assume that $ [H_n : H_n^{(\infty)}]\leq S$ for each $n \geq 1$.
By Proposition~\ref{proposition:convergence_k}, we may further assume that for each $k \geq 0$, there exists $N(k) \geq 1$ such that $H_n \leq \closure{k}{H}$ for each $n \geq N(k)$. In order to prove that $[H : H^{(\infty)}] \leq S$, it suffices to prove that $[H : P] \leq S$ for each $P \leq_{\text{ofi}} H$. By Lemma~\ref{lemma:ofi}, there exists $K \geq 0$ such that $\closure{k}{P} \leq_{\text{ofi}} \closure{k}{H}$ for any $k \geq K$. Let us temporarily fix $k \geq K$. For each $n \geq N(k)$, we have $H_n \leq \closure{k}{H}$ and hence $\closure{k}{P} \cap H_n \leq_{\text{ofi}} H_n$. By hypothesis, this means that $[H_n : \closure{k}{P} \cap H_n] \leq S$. Letting $n$ tend to infinity, we obtain with Lemma~\ref{lemma:index} that $[H : \closure{k}{P} \cap H] \leq S$ for each $k \geq K$. Now letting $k$ tend to infinity and because $\closure{k}{P} \to \overline{P}$ (see Lemmas~\ref{lemma:inter} (1) and~\ref{lemma:k-closure}), we get $[H : \overline{P}] \leq S$. An open subgroup is always closed, so $\overline{P} = P$ and the conclusion follows.
\end{proof}

\section{Trees}

\subsection{Existence and conjugation of tree lattices}\label{subsection:trees}

When $\Lambda$ is a locally finite tree, Propositions~\ref{proposition:existence} and~\ref{proposition:conjugate} below (which come from~\cite{Bass-Kulkarni} and~\cite{Bass} respectively) can be used to drop the hypothesis about $\Gamma$ in Theorem~\ref{theorem:simpleclosed}.

\begin{proposition}\label{proposition:existence}
Let $T$ be a locally finite tree. Let $H \leq \Aut(T)$ act cocompactly on $T$ and suppose that $\overline{H}$ is unimodular. Then $H$ contains a free uniform lattice, i.e.\ there exists a  discrete subgroup $\Gamma \leq H$ acting freely and cocompactly on $T$.
\end{proposition}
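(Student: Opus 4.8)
The plan is to work in the combinatorial framework of edge-indexed graphs due to Bass--Kulkarni, and then to take care that the lattice produced actually lies in $H$ and not merely in $\overline{H}$. After passing to the first barycentric subdivision I may assume that $G := \overline{H}$ acts without inversion on $T$. Since $H$ is dense in $G$ for the compact-open topology, every $g \in G$ agrees on each finite ball with some element of $H$; in particular $G$ and $H$ have the same orbits on vertices and edges and the same action on every finite ball. Hence the edge-indexed quotient $(Q,i) = G\backslash\!\backslash T = H\backslash\!\backslash T$ is the same finite graph for both, with $i(e) = [G_{o(e)} : G_e]$, and it is finite because the action is cocompact.

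First I would translate the topological hypothesis into a combinatorial one. A cocompact $G \leq \Aut(T)$ is unimodular if and only if its edge-indexed quotient $(Q,i)$ is \emph{unimodular} in the sense of Bass--Kulkarni, i.e.\ there is a positive rational volume function $\mu$ on $V(Q)$ with $\mu(o(e))\, i(\bar e) = \mu(t(e))\, i(e)$ for every oriented edge $e$, equivalently $\prod_e i(e)/i(\bar e) = 1$ around every closed path. This is the standard dictionary between the modular function of $G$, read off from the Haar measures of the compact open stabilizers $G_v$, and the vertex volumes $\mu$; it is exactly here that the hypothesis on $\overline{H}$ enters.

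Next I would invoke the core existence result of \cite{Bass-Kulkarni}: a finite unimodular edge-indexed graph $(Q,i)$ arising as the quotient of a tree is covered by an honest finite graph. Concretely, $T$ is the universal cover of a finite connected graph $Y$, with $T \to Y$ a genuine graph covering (so all edge-indices of $Y$ equal $1$); such a $Y$ is assembled from finitely many copies of the local pictures prescribed by $(Q,i)$, and unimodularity (via the integral solution $\mu$, cleared of denominators) is precisely the consistency condition making this assembly close up into a finite graph. The deck group of $T \to Y$ is then a free group acting freely and cocompactly, i.e.\ a free uniform lattice, but a priori only inside $\Aut(T)$.

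The delicate point is to place the lattice inside $H$. Rather than use the deck group abstractly, I would reconstruct it by generators and gluings: lift a spanning tree of $Y$ to a subtree $S \subset T$, and for each of the finitely many remaining edges of $Y$ choose an automorphism identifying the two prescribed lifts of its endpoints. These finitely many generators freely generate a group acting freely on $T$ with quotient $Y$, and all three properties---freeness of the group, freeness of the action, and cocompactness---depend only on the restriction of each generator to a bounded neighbourhood of $S$, by a fundamental-domain (ping-pong) verification. Since each required identification is between configurations in the same $H$-orbit and $H$ has the same action as $\overline{H}$ on every finite ball, I may take every generator inside $H$ with the prescribed local behaviour, so that $\Gamma \leq H$. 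The main obstacle is exactly this last step: making rigorous that finitely many local constraints force the global freeness and cocompactness, so that the gluing elements can be chosen in the dense subgroup $H$ without breaking the lattice structure.
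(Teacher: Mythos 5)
Your proposal is correct and follows essentially the paper's own route: the paper proves Proposition~\ref{proposition:existence} by citing the Existence Theorem of \cite{Bass-Kulkarni}, whose proof is precisely what you reconstruct --- unimodularity of $\overline{H}$ translated into unimodularity of the edge-indexed quotient $(Q,i)$, the combinatorial heart producing a finite covering of $(Q,i)$ with all indices $1$, and then realization of the lattice inside $H$ itself, which in Bass's covering theory \cite{Bass} amounts exactly to your spanning-tree lift with edge-pairings. The ``main obstacle'' you flag is a standard local-to-global tiling lemma and does hold; in fact, once a consistent system of connecting edges at the vertices of $S$ is designated, each generator is constrained only to carry one designated oriented edge to another lying over the same oriented edge of $Q$, so equality of the $H$- and $\overline{H}$-orbits already suffices and the finer approximation of $\overline{H}$ by $H$ on large balls is not needed.
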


\begin{proof}
See~\cite{Bass-Kulkarni}*{Existence Theorem}.
\end{proof}

Given a tree $T$ and two groups $H, H' \leq \Aut(T)$ acting without inversion on $T$, we write the equality
$$H \backslash T = H' \backslash T$$ 
whenever $H$ and $H'$ have the same orbits on $T$. The latter condition, which means that the canonical projections $p \colon T \to H \backslash T$ and $p' \colon T \to H' \backslash T$ coincide, implies in particular that the quotient graphs $H \backslash T$ and $H' \backslash T$ are isomorphic \emph{as edge-indexed graphs}, since the edge-indexing function of the quotient graph is completely determined by the projection map. The following basic fact clarifies the difference between isomorphism and equality of quotients. 
 
\begin{lemma}\label{lem:Lifting}
Let $T$ be a tree and $H, H' \leq \Aut(T)$ act without inversion on $T$. If $H \backslash T $ and $ H' \backslash T$ are isomorphic as edge-indexed graphs, then there exists $g \in \Aut(T)$ such that $H \backslash T = H'' \backslash T$, where $H'' = g H' g^{-1}$.
\end{lemma}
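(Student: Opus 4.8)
The plan is to realize the abstract edge-indexed isomorphism $\phi \colon H\backslash T \to H'\backslash T$ by an honest tree automorphism. Write $p \colon T \to H\backslash T$ and $p' \colon T \to H'\backslash T$ for the two canonical projections. I claim it suffices to produce $f \in \Aut(T)$ such that
$$p' \circ f = \phi \circ p$$
as maps on vertices and on oriented edges of $T$. Indeed, set $g := f^{-1}$ and $H'' = gH'g^{-1} = f^{-1}H'f$. Two cells $x,y$ of $T$ lie in the same $H''$-orbit if and only if $f(x),f(y)$ lie in the same $H'$-orbit, i.e.\ $p'(f(x)) = p'(f(y))$; by the displayed identity this holds if and only if $\phi(p(x)) = \phi(p(y))$, and since $\phi$ is a bijection, if and only if $p(x) = p(y)$, i.e.\ $x,y$ are in the same $H$-orbit. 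Thus $H$ and $H''$ have exactly the same orbits on $T$, which is precisely $H\backslash T = H''\backslash T$. So the entire problem reduces to lifting $\phi$ through the two projections, and I will build $f$ directly from the tree structure.

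To construct $f$, fix a base vertex $v_0 \in V(T)$. Since $p'$ is surjective, choose $w_0 \in V(T)$ with $p'(w_0) = \phi(p(v_0))$ and set $f(v_0) = w_0$. I then extend $f$ inductively over the balls $B(v_0,r)$, maintaining that $f$ is a graph isomorphism onto its image with $p'\circ f = \phi\circ p$ on $B(v_0,r)$. Given $f$ on $B(v_0,r)$, take a vertex $x$ with $d(v_0,x)=r$ and put $w = f(x)$. By the defining property of the edge-indexing $i$, the oriented edges of $T$ issuing from $x$ partition according to their $p$-images, an oriented edge $e$ with origin $p(x)$ receiving exactly $i(e)$ of them; likewise the oriented edges issuing from $w$ partition with $\phi(e)$ receiving $i(\phi(e)) = i(e)$ of them, as $\phi$ preserves the index. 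The unique edge from $x$ toward $v_0$ already has its image assigned, and by the inductive identity its $p$-image $e_0$ corresponds under $f$ to the $p'$-image $\phi(e_0)$. Deleting this back-edge from both sides leaves, for every oriented edge $e$ with origin $p(x)$, exactly $i(e)$ forward edges on each side projecting to $e$ (resp.\ $\phi(e)$) when $e \neq e_0$, and $i(e_0)-1$ of them when $e = e_0$. Hence I may fix any bijection between the forward edges at $x$ and the forward edges at $w$ respecting $p$ and $p'$, and let $f$ send the corresponding new neighbors accordingly; this extends $f$ to $B(v_0,r+1)$ while preserving $p'\circ f = \phi\circ p$.

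Because $T$ is a tree, every vertex is reached by a unique reduced path from $v_0$, so these local choices never conflict and $f$ is well defined on all of $T$. By construction $f$ is bijective on the star of each vertex, hence non-backtracking and therefore injective, and its image is a subtree containing the full star of each of its vertices, hence all of the connected tree $T$; thus $f \in \Aut(T)$, and $g = f^{-1}$ is the desired element. The only genuine subtlety, and the step to treat carefully, is the bookkeeping at each vertex: one must verify that after removing the already-assigned back-edge the residual index counts on the two sides still match (this is exactly where $i(\phi(e)) = i(e)$ and the injectivity of $\phi$ are used), and that the acyclicity of $T$ is what promotes these purely local bijections to a single global automorphism. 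For a general graph $\Lambda$ no such global lift need exist, so the hypothesis that $T$ is a tree is essential.
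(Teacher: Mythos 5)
Your argument is correct, but it differs from the paper in an essential formal respect: the paper gives no internal proof of this lemma at all, and simply cites Lemma~13~(1) of Mosher--Sageev--Whyte (\emph{Maximally symmetric trees}), of which the statement is a special case. What you have done is, in effect, reprove that cited lemma from scratch: your map $f$ with $p' \circ f = \phi \circ p$ is precisely a lift of the edge-indexed isomorphism $\phi$ to the common universal cover, constructed by the standard ball-by-ball induction from covering theory for edge-indexed graphs (in the sense of Bass), and your reduction of the orbit-equality $H \backslash T = (f^{-1}H'f)\backslash T$ to the existence of such a lift is exactly right, since the fibers of $p$ and $p'$ are the orbits. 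The bookkeeping you flag is handled correctly: the counts $i(e)$ and $i(\phi(e))$ agree because $\phi$ preserves the indexing, the already-assigned back-edge at a vertex $x$ of distance $r$ projects to $\phi(e_0)$ by the inductive identity (note that acting without inversion guarantees $p(a) \neq p(\bar a)$, so this accounting is unambiguous even when the quotient has loops), the residual counts therefore match, and local bijectivity on stars plus acyclicity of $T$ upgrades the local choices to a global automorphism. The trade-off is the expected one: the paper's citation is economical and places the lemma in its natural context (Bass's covering theory of graphs of groups, where the same argument also yields uniqueness of universal covers of edge-indexed graphs), whereas your construction is self-contained and has the merit of making visible exactly where the hypotheses enter --- the tree structure for globalizing local bijections, and the absence of inversions for the edge counting --- which the citation hides.
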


\begin{proof}
This is a particular case of \cite{Mosher}*{Lemma~13 (1)}.
\end{proof}

\begin{proposition}\label{proposition:conjugate}
Let $T$ be a tree and $H, H' \leq \Aut(T)$ act without inversion on $T$. Suppose that $H \backslash T = H' \backslash T$. If $\Gamma \leq H$ acts freely on $T$, then there exists $\tau \in \Aut(T)$ such that $\tau\Gamma\tau^{-1} \leq H'$.
\end{proposition}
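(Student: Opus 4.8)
The plan is to use Bass--Serre covering theory, realizing $\Gamma$ through the common edge-indexed quotient of $H$ and $H'$. First I would record that since $\Gamma \le H$ acts freely and without inversion on the tree $T$, the quotient $B := \Gamma \backslash T$ is an ordinary graph and the projection $q \colon T \to B$ is the universal covering, with $\Gamma$ identified with its deck transformation group, canonically isomorphic to $\pi_1(B)$. Because $\Gamma \le H$, the $\Gamma$-orbits refine the $H$-orbits, so $q$ factors the quotient map $p \colon T \to A := H \backslash T$ through a graph morphism $f \colon B \to A$ with $p = f \circ q$. The hypothesis $H \backslash T = H' \backslash T$ means precisely that $H$ and $H'$ induce the same projection $p$ and the same edge-indexed graph $(A, i)$; in particular $f \colon B \to A$ is unchanged if we read $A$ as $H' \backslash T$.

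The heart of the argument is to reinterpret $f$ as a covering relative to $H'$. Viewing the $H'$-action on $T$ as the Bass--Serre realization of a graph of groups $(\mathbf{A}', A)$, I would invoke the fact that whether a morphism from a graph with trivial vertex and edge groups to $(\mathbf{A}', A)$ is a covering of graphs of groups is governed solely by a local degree condition expressed through the edge-indexing $i$: at each vertex $b$ over $a$ and each oriented edge $e$ issuing from $a$, the number of edges of $B$ at $b$ lying over $e$ must equal $i(e)$. This condition is already satisfied by $f$, since it held for the $H$-realization and $(A,i)$ is common to $H$ and $H'$. By Bass's correspondence between coverings of $(\mathbf{A}', A)$ and subgroups of $\pi_1(\mathbf{A}', A) = H'$ acting freely on the Bass--Serre tree \cite{Bass}, the covering $f$ determines a subgroup $\Gamma' \le H'$ acting freely and without inversion on $T$ with $\Gamma' \backslash T \cong B$ as coverings of $A$.

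To finish, I would compare $\Gamma$ and $\Gamma'$: both act freely and without inversion on $T$ with quotient graphs isomorphic to $B$, so $T \to \Gamma \backslash T$ and $T \to \Gamma' \backslash T$ are two universal coverings of $B$. By uniqueness of the universal covering there is $\tau \in \Aut(T)$ lifting the identity of $B$ under the two identifications of the quotients with $B$; such a $\tau$ carries the deck group of the first covering onto that of the second, i.e.\ $\tau \Gamma \tau^{-1} = \Gamma' \le H'$, as required.

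The main obstacle is the transport step: one must verify carefully that the datum making $f$ a covering is intrinsic to the edge-indexed graph $(A,i)$ and does not depend on the actual vertex stabilizers, so that a covering with underlying morphism $f$ exists for the $H'$-graph of groups as well. This is exactly the content borrowed from \cite{Bass}; the remaining points (uniqueness of the universal covering and the factorization of $p$) are routine. A concrete alternative to the abstract covering-theory formulation is to lift a spanning tree $B_0$ of $B$ to a tree of representatives $\Sigma \subseteq T$, read off a free generating set of $\Gamma$ from the edges of $B \setminus B_0$, and use the equality of orbits and edge-indices to choose, for each such generator, a matching element of $H'$ performing the same gluing; assembling these yields both $\Gamma' \le H'$ and the intertwining automorphism $\tau$.
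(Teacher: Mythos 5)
Your proposal is correct and follows essentially the same route as the paper: the paper's entire proof is the citation of \cite{Bass}*{Corollary~5.3}, and what you have written is precisely an unpacking of that result via Bass's covering theory of graphs of groups --- the observation that the covering condition for a morphism from a graph with trivial groups depends only on the edge-indexed quotient $(A,i)$, which is common to $H$ and $H'$, followed by the correspondence between such coverings and subgroups acting freely on the Bass--Serre tree, and the uniqueness of the universal covering to produce $\tau$ with $\tau\Gamma\tau^{-1}=\Gamma'\leq H'$. No gaps; your transport step and the identification of $\Gamma$ with the full deck group of $T \to \Gamma\backslash T$ are exactly the points handled by the cited reference.
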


\begin{proof}
See~\cite{Bass}*{Corollary~5.3}.
\end{proof}

\begin{corollary}\label{corollary:tree:common-lattice}
Let $T$ be a locally finite tree and $H \in \Sub(\Aut(T))$. Suppose that $H$ is unimodular and acts cocompactly on $T$ (these conditions hold, for instance, if $H$ is edge-transitive and type-preserving). Then $H$ has a subgroup $\Gamma$ acting freely and cocompactly on $T$ such that $\Sub(\Aut(T))_{\geq \Gamma}$ is a neighborhood of $H$ in $\Sub(\Aut(T))$.
\end{corollary}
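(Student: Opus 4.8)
The plan is to produce, in two stages, first the free cocompact subgroup $\Gamma$ and then a single explicit clopen neighborhood of $H$ that is contained in $\Sub(\Aut(T))_{\geq \Gamma}$, so that no convergence argument is needed at all. For the first stage, I would observe that since $H \in \Sub(\Aut(T))$ we have $\overline H = H$, so $H$ is a closed cocompact unimodular subgroup and Proposition~\ref{proposition:existence} applies directly, yielding a discrete $\Gamma \leq H$ acting freely and cocompactly on $T$. A free uniform lattice acts without inversion on $T$, and I would record the consequence that $\Gamma$ then also acts \emph{freely} on the first barycentric subdivision $T^{(1)}$: the midpoint of an edge $e$ is fixed by $\gamma \in \Gamma$ only if $\gamma$ stabilizes or inverts $e$, and both are excluded by freeness without inversion. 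This is the point that lets me run the Bass--Kulkarni machinery on the tree $T^{(1)}$ rather than on $T$.

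For the second stage I would pass once and for all to $T^{(1)}$, on which \emph{every} subgroup of $\Aut(T)$ acts without inversion, and use the two-coloring $c \colon V(T^{(1)}) \to \{0,1\}$ of Proposition~\ref{proposition:CLOPEN}(2) distinguishing original vertices from edge-midpoints. Setting $(Q_{c'},i) := H\backslash (T^{(1)})_c$ (a \emph{finite} edge-indexed colored graph, as $H$ is cocompact), Proposition~\ref{proposition:CLOPEN}(2) tells me that
$$\mathcal U := \{H' \in \Sub(\Aut(T)) \mid H'\backslash (T^{(1)})_c \cong (Q_{c'},i)\}$$
is clopen, and since $H \in \mathcal U$ trivially, $\mathcal U$ is a neighborhood of $H$. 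It then suffices to prove the inclusion $\mathcal U \subseteq \Sub(\Aut(T))_{\geq \Gamma}$. So I fix an arbitrary $H' \in \mathcal U$ and chase it through the lifting and conjugation results applied to the tree $T^{(1)}$. The colored version of Lemma~\ref{lem:Lifting} produces $g \in \Aut((T^{(1)})_c)$ with $H\backslash (T^{(1)})_c = (gH'g^{-1})\backslash (T^{(1)})_c$ (equality of colored orbits); since $\Gamma \leq H$ acts freely on $T^{(1)}$, the colored version of Proposition~\ref{proposition:conjugate} then yields $\tau \in \Aut((T^{(1)})_c)$ with $\tau\Gamma\tau^{-1} \leq gH'g^{-1}$, whence $(g^{-1}\tau)\Gamma(g^{-1}\tau)^{-1} \leq H'$ and $H' \in \Sub(\Aut(T))_{\geq \Gamma}$.

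The main obstacle, and the reason the coloring is indispensable, is keeping the conjugating elements inside $\Aut(T)$ rather than in the strictly larger group $\Aut(T^{(1)})$, which can contain spurious automorphisms swapping an original degree-$2$ vertex with an adjacent midpoint. The identification $\Aut((T^{(1)})_c) = \Aut(T)$ — a color-preserving automorphism of $T^{(1)}$ fixes the partition into original vertices and midpoints and hence restricts to an honest automorphism of $T$, and conversely — is exactly what forces $g,\tau \in \Aut(T)$ and makes the final membership statement meaningful. Consequently I would need to justify that Lemma~\ref{lem:Lifting} and Proposition~\ref{proposition:conjugate} remain valid in the colored category; this is routine, since the underlying covering-theoretic arguments of Mosher and Bass are insensitive to a vertex-decoration that all morphisms are required to preserve, but it is the step deserving the most care in the write-up.
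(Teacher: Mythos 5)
Your argument is correct and is essentially the paper's own proof: the same chain Proposition~\ref{proposition:existence} (to get $\Gamma$), Proposition~\ref{proposition:CLOPEN} (rigidity of the quotient edge-indexed graph near $H$), then Lemma~\ref{lem:Lifting} and Proposition~\ref{proposition:conjugate} to conjugate $\Gamma$ into nearby groups, the only difference being packaging — the paper checks sequentially that any $H_n \to H$ eventually lies in $\Sub(\Aut(T))_{\geq \Gamma}$, using Proposition~\ref{proposition:CLOPEN} (1) and (3) together with Lemma~\ref{lemma:basicChabauty} after replacing $T$ by its barycentric subdivision, whereas you exhibit the single explicit clopen neighborhood furnished by Proposition~\ref{proposition:CLOPEN} (2). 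Your insistence on working in the colored category via the identification $\Aut((T^{(1)})_c) = \Aut(T)$, so that the conjugators produced by Lemma~\ref{lem:Lifting} and Proposition~\ref{proposition:conjugate} stay in $\Aut(T)$ even when $T$ has degree-$2$ vertices, makes explicit (correctly, and with the right justification that the covering-theoretic constructions respect a decoration preserved by all morphisms) a point that the paper's barycentric-subdivision reduction leaves implicit.
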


\begin{proof}
Upon replacing $T$ by its first barycentric subdivision, we may assume that $H$ acts without inversion. By Proposition~\ref{proposition:existence}, there exists $\Gamma \leq H$ acting cocompactly and freely on $T$. Consider a converging sequence $H_n \to H$ in $\Sub(\Aut(T))$. By Proposition~\ref{proposition:CLOPEN} (3) $H_n$ acts with at most $C$ orbits of vertices  for all sufficiently large $n$, where $C = \# V(H \backslash T)$. We may then deduce from  Lemma~\ref{lemma:basicChabauty} that $H_n$ acts without inversion  for all sufficiently large $n$ because $H$ does. Moreover Proposition~\ref{proposition:CLOPEN} (1) ensures that, for sufficiently large $n$, the quotient graphs $H_n \backslash T$ and $H \backslash T$ are isomorphic as edge-indexed graphs. Hence, by Lemma~\ref{lem:Lifting} and Proposition~\ref{proposition:conjugate}, for sufficiently large $n$, there exists $\tau_n \in \Aut(T)$ such that $\tau_n \Gamma \tau_n^{-1} \leq H_n$, i.e.\ $H_n \in \Sub(\Aut(T))_{\geq \Gamma}$.
\end{proof}

We record the following result for its own interest. It shows that, in Corollary~\ref{corollary:tree:common-lattice}, the choice of $\Gamma$  can be made uniform, i.e.\ independent of the choice of $H$. In order to make this precise, we define
$$\Sub(\Aut(T))_{\leq C}^{0} := \{H \in \Sub(\Aut(T))_{\leq C} \mid H \text{ is unimodular}\}.$$

\begin{corollary}\label{corollary:tree:common-lattice2}
Let $T$ be a locally finite tree. For each $C>0$, the set $\Sub(\Aut(T))_{\leq C}^{0}$ is clopen in $\Sub(\Aut(T))$. Moreover there exists a subgroup $\Gamma \leq \Aut(T)$ acting freely and cocompactly on $T$   such that 
$$\Sub(\Aut(T))_{\leq C}^{0} \subseteq \Sub(\Aut(T))_{\geq \Gamma}.$$
\end{corollary}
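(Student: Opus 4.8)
### Proof proposal for Corollary~\ref{corollary:tree:common-lattice2}

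The statement has two parts. For the first part, I want to show that $\Sub(\Aut(T))_{\leq C}^0$ is clopen. I already know from Proposition~\ref{proposition:CLOPEN} (3) that $\Sub(\Aut(T))_{\leq C}$ is clopen in $\Sub(\Aut(T))$, and from Theorem~\ref{theorem:Unimod} that the set $\Sub(\Aut(T))^0$ of all unimodular closed subgroups is closed. The plan is to combine these: $\Sub(\Aut(T))_{\leq C}^0 = \Sub(\Aut(T))_{\leq C} \cap \Sub(\Aut(T))^0$ is an intersection of a clopen set with a closed set, so it is at least closed. To get that it is also open, I would argue that within the clopen piece $\Sub(\Aut(T))_{\leq C}$ the unimodularity condition is actually automatic or clopen. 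The cleanest route: on groups acting cocompactly with a bounded number of orbits, unimodularity is detected by the edge-indexing function of the quotient graph (a cocompact group $H$ acting without inversion is unimodular iff the product of indices $i(e)$ around any cycle, equivalently the covolume formula of Bass--Kulkarni, is balanced). Since the isomorphism type of the edge-indexed quotient graph $H\backslash (T^{(1)})_c$ is locally constant by Proposition~\ref{proposition:CLOPEN} (2), and unimodularity depends only on that isomorphism type, unimodularity is locally constant on $\Sub(\Aut(T))_{\leq C}$, hence $\Sub(\Aut(T))_{\leq C}^0$ is clopen.

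For the second, harder part, I need a single group $\Gamma$ that works uniformly for all $H$ in $\Sub(\Aut(T))_{\leq C}^0$. The key finiteness input is that, by Proposition~\ref{proposition:CLOPEN}, there are only \emph{finitely many} isomorphism types of edge-indexed quotient graphs $(Q_{c'},i)$ arising from groups in $\Sub(\Aut(T))_{\leq C}^0$; equivalently $\Sub(\Aut(T))_{\leq C}^0$ is partitioned into finitely many clopen pieces, one per isomorphism type of quotient. The plan is to treat one isomorphism type at a time and then take a common refinement. After passing to the barycentric subdivision so all groups act without inversion, on each clopen piece I fix a representative group $H_0$ of that quotient isomorphism type; since $H_0$ is unimodular and cocompact, Proposition~\ref{proposition:existence} supplies a free uniform lattice $\Gamma_0 \leq H_0$. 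Now for any other $H$ in the same piece, $H\backslash T$ and $H_0\backslash T$ are isomorphic as edge-indexed graphs, so Lemma~\ref{lem:Lifting} gives $g\in\Aut(T)$ with $H_0\backslash T = (gHg^{-1})\backslash T$, and Proposition~\ref{proposition:conjugate} then yields $\tau$ with $\tau\Gamma_0\tau^{-1} \leq gHg^{-1}$, i.e. $(g^{-1}\tau)\Gamma_0(g^{-1}\tau)^{-1}\leq H$. Thus $\Gamma_0$ witnesses $H\in\Sub(\Aut(T))_{\geq\Gamma_0}$ for every $H$ in that piece.

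To obtain a single $\Gamma$, I would take the finitely many lattices $\Gamma_1,\dots,\Gamma_m$ obtained from the $m$ clopen pieces and form a common free uniform lattice contained in a suitable common cocompact subgroup. The subtle point here is that the $\Gamma_j$ live in different groups, so I cannot simply intersect them. Instead I would pass to a common refinement of quotient graphs: choose $\Gamma$ to be a free uniform lattice whose quotient graph edge-refines (covers) each of the quotient graphs $\Gamma_j\backslash T$, so that membership $H\in\Sub(\Aut(T))_{\geq\Gamma}$ follows from $H\in\Sub(\Aut(T))_{\geq\Gamma_j}$ by a further application of Proposition~\ref{proposition:conjugate}. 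Concretely, any free uniform lattice $\Gamma$ acting with sufficiently fine (large) quotient will be conjugate into a subgroup of each $\Gamma_j$, hence into each $H$; such a $\Gamma$ exists by Proposition~\ref{proposition:existence} applied to a small enough unimodular cocompact group, or directly by the Bass--Kulkarni theory of tree lattices.

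The main obstacle I anticipate is exactly this last amalgamation step: producing \emph{one} free uniform lattice $\Gamma$ that embeds (up to conjugacy) into all groups simultaneously, rather than a separate $\Gamma_j$ per piece. The resolution leans on the flexibility of Proposition~\ref{proposition:conjugate} — a free lattice with a finer quotient can always be conjugated inside a group with a coarser quotient — combined with the crucial finiteness of the number of quotient-isomorphism types guaranteed by Proposition~\ref{proposition:CLOPEN}. Verifying that a single sufficiently fine free lattice $\Gamma$ conjugates into each $\Gamma_j$ (and handling the barycentric-subdivision bookkeeping so that "acting without inversion" holds throughout) is where the real work lies.
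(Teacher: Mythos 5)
Your overall skeleton matches the paper's proof: closedness from Proposition~\ref{proposition:CLOPEN}~(3) and Theorem~\ref{theorem:Unimod}, a partition of $\Sub(\Aut(T))_{\leq C}^{0}$ into finitely many pieces according to the isomorphism type of the edge-indexed quotient graph, and one free uniform lattice $\Gamma_j$ per piece via Proposition~\ref{proposition:existence}, Lemma~\ref{lem:Lifting} and Proposition~\ref{proposition:conjugate}. For openness your route differs from the paper's: you invoke the Bass--Kulkarni criterion that unimodularity of a closed cocompact subgroup is read off from its edge-indexed quotient graph (products of indices balancing around closed paths), which is locally constant by Proposition~\ref{proposition:CLOPEN}. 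That works if you cite the criterion, but note the paper gets openness for free from Corollary~\ref{corollary:tree:common-lattice}: each unimodular $H$ has a Chabauty neighborhood contained in $\Sub(\Aut(T))_{\geq \Gamma}$, and every group containing a cocompact lattice is unimodular --- no external characterization needed.

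The genuine gap is the amalgamation step, exactly where you admit "the real work lies," and your proposed resolutions do not close it. Proposition~\ref{proposition:conjugate} only applies when the two groups have \emph{equal} quotients $H \backslash T = H' \backslash T$; it says nothing about conjugating a lattice with a "finer" quotient into one with a "coarser" quotient. Moreover, the claim that \emph{any} free uniform lattice with a sufficiently fine (large) quotient graph is conjugate into each $\Gamma_j$ is false: a finite graph $Y$ embeds $\pi_1(Y)$ into a conjugate of $\Gamma_j = \pi_1(Y_j)$ compatibly with the actions on $T$ essentially only when $Y$ \emph{covers} $Y_j$, and being large imposes no covering map (already cardinality divisibility obstructs it generically). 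What you actually need is the existence of a single finite graph covering all the finitely many quotient graphs $Y_1, \dots, Y_m$ with common universal cover $T$ --- this is Leighton's common covering theorem, equivalently the \cite{Bass-Kulkarni}*{Commensurability Theorem}, a substantial result that neither Proposition~\ref{proposition:existence} ("applied to a small enough unimodular cocompact group" --- there is no candidate group containing all the $\Gamma_j$ to apply it to) nor a further use of Proposition~\ref{proposition:conjugate} supplies. The paper's actual fix is a one-line citation of that theorem used differently: replace the $\Gamma_i$ by conjugates so that they are pairwise commensurate (commensurability of subgroups is transitive, so it suffices to make each $\Gamma_i$ commensurate with $\Gamma_1$), and set $\Gamma = \bigcap_{i=1}^{m} \Gamma_i$, which has finite index in $\Gamma_1$, hence acts freely and cocompactly, and satisfies $\Sub(\Aut(T))_{\geq \Gamma_i} \subseteq \Sub(\Aut(T))_{\geq \Gamma}$ for every $i$.
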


\begin{proof}
We already know by Proposition~\ref{proposition:CLOPEN} (3) that $\Sub(\Aut(T))_{\leq C}$ is a clopen subset of $\Sub(\Aut(T))$. Moreover the set $\Sub(\Aut(T))^{0}$ of unimodular subgroups is closed by Theorem~\ref{theorem:Unimod}. In particular $\Sub(\Aut(T))_{\leq C}^{0}$ is closed. 

For each $H \in \Sub(\Aut(T))_{\leq C}^{0}$,   Corollary~\ref{corollary:tree:common-lattice} yields a discrete cocompact group $\Gamma$ such that $\Sub(\Aut(T))_{\geq \Gamma}$ is a neighborhood of $H$. Since every locally compact group containing a lattice is unimodular, this implies that $\Sub(\Aut(T))_{\leq C}^{0}$ is also open. 

Let us now partition the set $\Sub(\Aut(T))_{\leq C}^{0}$ into subsets $\mathcal V_1, \dots, \mathcal V_m$ in such a way that $H, H' \in \mathcal V_i$ if and only if $H\backslash T \cong H' \backslash T$ as edge-indexed graphs. By Lemma~\ref{lem:Lifting} and Proposition~\ref{proposition:conjugate}, for each $i$ there exists a discrete cocompact group $\Gamma_i$ such that $\mathcal V_i \subseteq  \Sub(\Aut(T))_{\geq \Gamma_i}$. In particular
$$\Sub(\Aut(T))_{\leq C}^{0} \subseteq \bigcup_{i=1}^m \Sub(\Aut(T))_{\geq \Gamma_i}.$$
By \cite{Bass-Kulkarni}*{Commensurability Theorem}, upon replacing each $\Gamma_i$ by a conjugate, we may assume that they are pairwise commensurate, i.e.\ the index of $\Gamma_i \cap \Gamma_j$ is of finite index in $\Gamma_i$ for all $i$ and $j$. It follows that $\Gamma = \bigcap_{i=1}^m \Gamma_i$ is itself a cocompact lattice in $\Aut(T)$. The required assertion follows since 
$$\Sub(\Aut(T))_{\geq \Gamma_i} \subseteq \Sub(\Aut(T))_{\geq \Gamma}$$
 for all $i$. 
\end{proof}

We then deduce the following corollary from Proposition~\ref{proposition:convergence_k}.

\begin{corollary}\label{corollary:tree:convergence<k-closure}
Let $T$ be a locally finite tree and $H \in \Sub(\Aut(T))$. Suppose that $H$ is unimodular and acts cocompactly on $T$. Fix $v_0 \in V(T)$. Then for each $k \geq 0$, the set
$$\{J \in \Sub(\Aut(T)) \mid \sigma J \sigma^{-1} \leq \closure{k}{H} \text{ for some } \sigma \in \Aut(T)_{v_0}^{[k]}\}$$
is a neighborhood of $H$ in $\Sub(\Aut(T))$.
\end{corollary}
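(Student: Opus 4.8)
The plan is to deduce Corollary~\ref{corollary:tree:convergence<k-closure} from the already-established Proposition~\ref{proposition:convergence_k} by exhibiting a suitable cocompact group $\Gamma$ contained in $H$ and then upgrading the conclusion, which is stated relative to $\Sub(\Aut(T))_{\geq \Gamma}$, to a statement about a full Chabauty-neighborhood of $H$ in $\Sub(\Aut(T))$. The key observation is that the set $V_k$ produced by Proposition~\ref{proposition:convergence_k} is a neighborhood of $H$ \emph{inside the subspace} $\Sub(\Aut(T))_{\geq \Gamma}$, whereas the corollary asserts that the corresponding set is a neighborhood of $H$ in the ambient space. The bridge between the two is Corollary~\ref{corollary:tree:common-lattice}, which guarantees, precisely under the hypotheses here (namely that $H$ is unimodular and acts cocompactly), the existence of a subgroup $\Gamma \leq H$ acting freely and cocompactly on $T$ such that $\Sub(\Aut(T))_{\geq \Gamma}$ is itself a neighborhood of $H$ in $\Sub(\Aut(T))$.

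First I would apply Corollary~\ref{corollary:tree:common-lattice} to obtain such a $\Gamma$, so that $H \in \Sub(\Aut(T))_{\geq \Gamma}$ and $\Sub(\Aut(T))_{\geq \Gamma}$ is a neighborhood of $H$. Next I would invoke Proposition~\ref{proposition:convergence_k} with this $\Gamma$ and the fixed base vertex $v_0$, which tells us that for each $k \geq 0$ the set
$$V_k = \{J \in \Sub(\Aut(T))_{\geq \Gamma} \mid \sigma J \sigma^{-1} \leq \closure{k}{H} \text{ for some } \sigma \in \Aut(T)_{v_0}^{[k]}\}$$
is a neighborhood of $H$ in $\Sub(\Aut(T))_{\geq \Gamma}$. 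Concretely, there is an open subset $\mathcal{O}$ of $\Sub(\Aut(T))$ with $H \in \mathcal{O}$ and $\mathcal{O} \cap \Sub(\Aut(T))_{\geq \Gamma} \subseteq V_k$.

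Finally I would intersect neighborhoods: the set $W_k$ appearing in the statement of the corollary is exactly $V_k$ with the membership condition $J \in \Sub(\Aut(T))_{\geq \Gamma}$ dropped, so $V_k = W_k \cap \Sub(\Aut(T))_{\geq \Gamma}$ and in particular $V_k \subseteq W_k$. Since $\Sub(\Aut(T))_{\geq \Gamma}$ is a neighborhood of $H$, there is an open set $\mathcal{O}'$ with $H \in \mathcal{O}' \subseteq \Sub(\Aut(T))_{\geq \Gamma}$. Then $\mathcal{O} \cap \mathcal{O}'$ is an open neighborhood of $H$ contained in $\mathcal{O} \cap \Sub(\Aut(T))_{\geq \Gamma} \subseteq V_k \subseteq W_k$, which shows $W_k$ is a neighborhood of $H$ in the ambient space $\Sub(\Aut(T))$, as required.

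The argument is essentially a routine patching of two neighborhoods, so there is no serious obstacle once the correct group $\Gamma$ has been produced; the only point requiring care is to verify that the hypotheses of Corollary~\ref{corollary:tree:common-lattice} are met (unimodularity and cocompactness of $H$), which hold by assumption, and to keep track of the fact that $V_k$ is a \emph{relative} neighborhood so that dropping the condition $J \geq \tau\Gamma\tau^{-1}$ only enlarges the set and hence preserves the neighborhood property. I expect the slightly delicate bookkeeping to be ensuring that the same $\Gamma$ (and the same $v_0$) works simultaneously for every $k$, but since $\Gamma$ depends only on $H$ and not on $k$, this is automatic from the way Corollary~\ref{corollary:tree:common-lattice} and Proposition~\ref{proposition:convergence_k} are phrased.
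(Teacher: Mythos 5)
Your proposal is correct and is exactly the paper's argument: the paper's proof consists of the single line that the corollary is the combination of Corollary~\ref{corollary:tree:common-lattice} and Proposition~\ref{proposition:convergence_k}, which is precisely the deduction you carry out. Your spelled-out patching of the relative neighborhood $V_k = W_k \cap \Sub(\Aut(T))_{\geq \Gamma}$ with the neighborhood $\Sub(\Aut(T))_{\geq \Gamma}$ of $H$ is the routine verification the paper leaves implicit.
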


\begin{proof}
This is the combination of Corollary~\ref{corollary:tree:common-lattice} and Proposition~\ref{proposition:convergence_k}.
\end{proof}

Arguing similarly, we obtain the following consequence of Theorem~\ref{theorem:simpleclosed}.

\begin{corollary}\label{corollary:tree}
Let $T$ be a locally finite tree and $H_n \to H$ be a converging sequence in $\Sub({\Aut(T)})$. Suppose that $H$ is unimodular and acts cocompactly on $T$. Then we have
$$[H : H^{(\infty)}] \leq \limsup_{n\to \infty}\, [H_n : H_n^{(\infty)}] .$$

In particular, if $H_n$ has no proper open subgroup of finite index for each $n \geq 1$ then $H$ has no proper open subgroup of finite index.
\end{corollary}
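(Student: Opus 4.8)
The plan is to reduce Corollary~\ref{corollary:tree} to Theorem~\ref{theorem:simpleclosed} by verifying that its hypotheses are met, with the key missing ingredient being the condition that all (or sufficiently many) of the $H_n$ contain a conjugate of a fixed cocompactly-acting group $\Gamma$. The results of \S\ref{subsection:trees} are precisely designed to supply this ingredient for trees, so the proof should be a short assembly argument.

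First I would observe that the inequality $[H : H^{(\infty)}] \leq \limsup_{n\to\infty}[H_n : H_n^{(\infty)}]$ only depends, in its right-hand side, on the tail of the sequence $(H_n)$, so I am free to discard finitely many terms. Since $H$ is unimodular and acts cocompactly on $T$, Corollary~\ref{corollary:tree:common-lattice} furnishes a discrete subgroup $\Gamma \leq H$ acting freely and cocompactly on $T$ such that $\Sub(\Aut(T))_{\geq \Gamma}$ is a neighborhood of $H$ in $\Sub(\Aut(T))$. Because $H_n \to H$, this means $H_n \in \Sub(\Aut(T))_{\geq \Gamma}$ for all sufficiently large $n$; that is, for each such $n$ there exists $\tau_n \in \Aut(T)$ with $\tau_n \Gamma \tau_n^{-1} \leq H_n$. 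After discarding the finitely many initial terms that fail this, I may assume it holds for every $n$.

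At this point the hypotheses of Theorem~\ref{theorem:simpleclosed} are satisfied with $\Lambda = T$ and the same $\Gamma$: I have a converging sequence $H_n \to H$ in $\Sub(\Aut(T))$, a cocompactly-acting group $\Gamma$, and elements $\tau_n$ conjugating $\Gamma$ into each $H_n$. Theorem~\ref{theorem:simpleclosed} then yields exactly the desired inequality $[H : H^{(\infty)}] \leq \limsup_{n\to\infty}[H_n : H_n^{(\infty)}]$. For the final assertion, if each $H_n$ has no proper open subgroup of finite index then $H_n = H_n^{(\infty)}$, so $[H_n : H_n^{(\infty)}] = 1$ for every $n$, whence $\limsup_{n\to\infty}[H_n : H_n^{(\infty)}] = 1$ and therefore $[H : H^{(\infty)}] \leq 1$, i.e.\ $H = H^{(\infty)}$, meaning $H$ has no proper open subgroup of finite index.

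The only real content beyond bookkeeping is the verification that a conjugate of a fixed $\Gamma$ eventually sits inside every $H_n$, and this is not something I would prove from scratch: it is the substance of Corollary~\ref{corollary:tree:common-lattice}, which in turn rests on the Bass--Kulkarni existence of free uniform lattices (Proposition~\ref{proposition:existence}), the clopen-ness of the quotient-type strata (Proposition~\ref{proposition:CLOPEN}), and the lifting/conjugation results for tree lattices (Lemma~\ref{lem:Lifting} and Proposition~\ref{proposition:conjugate}). The main point to get right is therefore simply that the neighborhood property of $\Sub(\Aut(T))_{\geq \Gamma}$ lets us absorb the cocompactness-type hypothesis on $H$ into the conjugacy hypothesis required by Theorem~\ref{theorem:simpleclosed}, after which the theorem applies verbatim.
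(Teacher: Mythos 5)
Your proposal is correct and follows essentially the same route as the paper's own proof: invoke Corollary~\ref{corollary:tree:common-lattice} to obtain $\Gamma \leq H$ with $\Sub(\Aut(T))_{\geq \Gamma}$ a Chabauty-neighborhood of $H$, deduce $H_n \in \Sub(\Aut(T))_{\geq \Gamma}$ for all sufficiently large $n$, and apply Theorem~\ref{theorem:simpleclosed}. Your explicit handling of discarding finitely many initial terms and of the final assertion is just the bookkeeping the paper leaves implicit.
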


\begin{proof}
Let $\Gamma$ be the subgroup of $H$ given by Corollary~\ref{corollary:tree:common-lattice}. It acts cocompactly on $T$ and is such that $\Sub(\Aut(T))_{\geq \Gamma}$ is a neighborhood of $H$. We thus have $H_n \in \Sub(\Aut(T))_{\geq \Gamma}$ for all sufficiently large $n$, and the conclusion follows from Theorem~\ref{theorem:simpleclosed}.
\end{proof}

\subsection{Limits of simple groups acting on trees}

The goal of this section is to prove the next theorem, which is a stronger version of Theorem~\ref{theorem:LimitsOfSimple}. 

\begin{theorem}\label{theorem:LimitsOfSimple2}
Let $T$ be a locally finite tree all of whose vertices have degree~$\geq 2$. For any $C > 0$, the Chabauty-closure of the set of abstractly simple groups in $\Sub(\Aut(T))_{\leq C}$ is the set of groups in $\Sub(\Aut(T))_{\leq C}$ without proper open subgroup of finite index.
\end{theorem}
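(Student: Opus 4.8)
The plan is to prove the two inclusions separately. Write $\mathcal{A}$ for the set of abstractly simple groups in $\Sub(\Aut(T))_{\leq C}$ and $\mathcal{T} := \{H \in \Sub(\Aut(T))_{\leq C} \mid H = H^{(\infty)}\}$. Since abstractly simple groups are in particular topologically simple, establishing $\overline{\mathcal{A}} = \mathcal{T}$ simultaneously sharpens Theorem~\ref{theorem:LimitsOfSimple}.

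For the inclusion $\overline{\mathcal{A}} \subseteq \mathcal{T}$, I would first record that any $H_n \in \mathcal{A}$ satisfies $[H_n : H_n^{(\infty)}] = 1$ and is unimodular. Indeed, $H_n^{(\infty)}$ is a closed normal subgroup (Lemma~\ref{lemma:normal}), so by simplicity it is trivial or all of $H_n$; triviality would force $H_n$ to be finite (an open finite-index subgroup collapsing to $\{1\}$), which is impossible for a group acting with boundedly many orbits on the infinite tree $T$. Unimodularity follows because the kernel of the modular function is a closed normal subgroup and no nontrivial subgroup of $\mathbf{R}_{>0}$ is topologically simple. Given $H_n \to H$ with $H_n \in \mathcal{A}$, Theorem~\ref{theorem:Unimod} shows $H$ is unimodular, $H$ is cocompact since $\Sub(\Aut(T))_{\leq C}$ is clopen (Proposition~\ref{proposition:CLOPEN}(3)), and Corollary~\ref{corollary:tree} gives $[H : H^{(\infty)}] \leq \limsup_n [H_n : H_n^{(\infty)}] = 1$, whence $H \in \mathcal{T}$.

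For the converse inclusion $\mathcal{T} \subseteq \overline{\mathcal{A}}$, fix $H \in \mathcal{T}$. Passing to the barycentric subdivision, I may assume $H$ acts without inversion, and since $T$ has no leaves and $H$ is cocompact I may assume the action is minimal and non-elementary (if $T$ is a line then $\mathcal{T}$ is empty). I would first check that $H$ is non-discrete and unimodular: a discrete cocompact group is virtually free, hence residually finite, which would force $H^{(\infty)} = \{1\} \neq H$; unimodularity follows as above once $H = H^{(\infty)}$ rules out a nontrivial finitely generated image in $\mathbf{R}_{>0}$. Now set $G_k := \closure{k}{H}$, so that $G_k \to H$ by Lemmas~\ref{lemma:k-closure} and~\ref{lemma:inter}(1). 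Invoking that $k$-closures inherit Tits' independence property (Banks--Elder--Willis), Tits' simplicity theorem makes the subgroup $G_k^+$ generated by the pointwise edge-fixators abstractly simple; it is nontrivial because non-discreteness of $H$ yields $H^+ \neq \{1\}$ and $H^+ \leq G_k^+$, and it is \emph{open}, being generated by the open edge-fixators. The heart of the argument is then to show $H \leq G_k^+$ for each $k$: by the squeeze $H \leq G_k^+ \leq G_k \to H$ this forces $G_k^+ \to H$ via Lemma~\ref{lemma:converge}, and since $\Sub(\Aut(T))_{\leq C}$ is clopen the simple groups $G_k^+$ lie in $\mathcal{A}$ for all large $k$, giving $H \in \overline{\mathcal{A}}$. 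To prove $H \leq G_k^+$, note that $G_k^+$ is open and normal, so $G_k/G_k^+$ is discrete, and by Bass--Serre theory it is a free product of the finite groups $G_{k,v}/(G_{k,v} \cap G_k^+)$ over the vertices of the finite quotient graph with a free group of rank equal to its first Betti number, hence virtually free. The image $\bar{H}$ of $H$ in $G_k/G_k^+$ is a finitely generated (as $H$ is compactly generated) subgroup of a virtually free group, thus itself virtually free and residually finite; but openness of $G_k^+$ makes every finite quotient of $\bar{H}$ lift to a continuous finite quotient of $H$, of which $H = H^{(\infty)}$ has none, so $\bar{H}$ is trivial and $H \leq G_k^+$.

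I expect the main obstacle to be precisely this last convergence step: ensuring that the approximating simple groups $G_k^+$ converge to $H$ itself rather than to a proper subgroup. This rests on two inputs that require care. First, one must confirm that the $k$-closures genuinely satisfy Tits' independence property so that $G_k^+$ is simple, and that minimality and non-elementarity of the action hold so that Tits' theorem applies; this is where the hypothesis of degree $\geq 2$ (no leaves) is used. Second, one must make $H = H^{(\infty)}$ bite, and the decisive point is that the openness of $G_k^+$ turns abstract finite quotients of the discrete image $\bar{H}$ into continuous finite quotients of $H$, so that the residual finiteness of the finitely generated virtually free group $\bar{H}$ can be contradicted unless $\bar{H}$ is trivial.
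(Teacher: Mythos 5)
Your proposal follows essentially the same route as the paper: the closedness half is the paper's Proposition~\ref{proposition:BoundedCovol} (unimodularity of the terms and the limit, then Corollary~\ref{corollary:tree}), and the density half is the paper's Proposition~\ref{proposition:DenseSimple} --- approximate $H$ by open abstractly simple subgroups of its $k$-closures, show the corresponding discrete quotient is virtually free via Bass--Serre theory, and play residual finiteness against $H = H^{(\infty)}$ to get the squeeze $H \leq H_k \leq \closure{k}{H} \to H$. Your finishing steps (openness of the simple subgroup turning finite quotients of the image into continuous finite quotients of $H$, and the clopenness of $\Sub(\Aut(T))_{\leq C}$ to place the approximants in the right set) match the paper's.

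Two local slips should be flagged, both repairable. First, the $k$-closure $G_k = \closure{k}{H}$ is \emph{not} known to satisfy Tits' independence property $P$: Banks--Elder--Willis prove it satisfies only the weaker $k$-step property $P_k$, and their simplicity theorem (the input to the paper's Proposition~\ref{proposition:BEW}) concerns the subgroup $G_k^{+_k}$ generated by pointwise stabilizers of $(k-1)$-balls around edges, not your $G_k^{+}$ generated by edge-fixators. As written, the abstract simplicity of $G_k^{+}$ is unjustified; replacing $G_k^{+}$ by $G_k^{+_k}$ throughout costs nothing, since $G_k^{+_k}$ is still open, normal, nontrivial (non-discreteness of $H$ makes every pointwise stabilizer of a finite subtree in $H$ nontrivial, so $H^{+_k} \neq \{1\}$), and has virtually free quotient --- this is exactly the paper's choice. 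Second, your reduction to a non-elementary action is too quick: leaflessness and cocompactness alone do not prevent $H$ from fixing an end (end stabilizers in $\Aut(T)$ can act cocompactly); ruling out a fixed end genuinely uses $H = H^{(\infty)}$, via the Busemann homomorphism onto an infinite cyclic discrete quotient, as in the paper's Lemma~\ref{lemma:invariant-subtree}. With these two corrections your proof coincides with the paper's.
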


We start by proving the following.

\begin{proposition}\label{proposition:BoundedCovol}
Let $T$ be a locally finite tree and let $C > 0$. The set 
$$\Sub(\Aut(T))_{\leq C}^{(\infty)} := \{H \in \Sub(\Aut(T))_{\leq C} \mid H = H^{(\infty)}\}$$
is closed in $\Sub({\Aut(T)})$. 
\end{proposition}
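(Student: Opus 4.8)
The plan is to show that $\Sub(\Aut(T))_{\leq C}^{(\infty)}$ is closed by taking an arbitrary converging sequence $H_n \to H$ with each $H_n \in \Sub(\Aut(T))_{\leq C}^{(\infty)}$ and verifying that the limit $H$ also satisfies $H = H^{(\infty)}$ and $H \in \Sub(\Aut(T))_{\leq C}$. The containment $H \in \Sub(\Aut(T))_{\leq C}$ is immediate, since $\Sub(\Aut(T))_{\leq C}$ is clopen by Proposition~\ref{proposition:CLOPEN}~(3). Thus the whole content is to prove that $H = H^{(\infty)}$, i.e.\ that $[H : H^{(\infty)}] = 1$, knowing that $[H_n : H_n^{(\infty)}] = 1$ for every $n$.

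The natural tool is Corollary~\ref{corollary:tree}, which asserts that whenever $H$ is unimodular and acts cocompactly on $T$, one has the inequality $[H : H^{(\infty)}] \leq \limsup_{n \to \infty} [H_n : H_n^{(\infty)}]$; the right-hand side here is $1$, which would immediately give $H = H^{(\infty)}$. So the first substantive step is to establish that the limit group $H$ is indeed unimodular and cocompact. Cocompactness (equivalently, a bounded number of vertex orbits) follows from $H \in \Sub(\Aut(T))_{\leq C}$. For unimodularity I would argue that each $H_n$, satisfying $H_n = H_n^{(\infty)}$, must be unimodular: a non-unimodular group admits a continuous homomorphism to $\R$ via its modular function, hence has proper open subgroups of finite index (for a compactly generated, hence for a cocompact, group), contradicting $H_n = H_n^{(\infty)}$. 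Then unimodularity passes to the limit $H$ by Theorem~\ref{theorem:Unimod}, which states that the set of unimodular closed subgroups is Chabauty-closed. With $H$ unimodular and cocompact, Corollary~\ref{corollary:tree} applies and finishes the argument.

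The main obstacle I anticipate is the unimodularity argument for the $H_n$, specifically the implication ``$H = H^{(\infty)}$ forces unimodularity.'' One must be careful that cocompactness of $H_n$ on $T$ yields compact generation, and that a compactly generated totally disconnected locally compact group with non-trivial modular function does produce a proper open finite-index subgroup. A clean way is to note that if $H_n$ were non-unimodular, the image of the modular homomorphism $\Delta \colon H_n \to \R_{>0}$ would be a non-trivial subgroup of $\R_{>0}$; since $H_n$ is generated by a compact set, this image is in fact a discrete (hence infinite cyclic) subgroup, so $\ker \Delta$ is a proper closed normal subgroup, and passing to an open subgroup of finite index in the quotient gives a proper open finite-index subgroup of $H_n$, contradicting $H_n = H_n^{(\infty)}$. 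I would double-check that the discreteness of the modular image really follows from cocompactness on the tree (this is standard for cocompact groups acting on trees, where translation lengths and local indices are integers, so $\Delta$ takes values in a finitely generated subgroup of $\R_{>0}^{\times}$).

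An alternative, possibly cleaner route that sidesteps the delicate modularity discussion would be to verify the hypotheses of Corollary~\ref{corollary:tree} directly on the limit $H$ rather than on the $H_n$: since $H$ is cocompact, it suffices to know $H$ is unimodular, and one could instead invoke Corollary~\ref{corollary:tree:common-lattice2}, which gives that $\Sub(\Aut(T))_{\leq C}^{0}$ (the unimodular subgroups acting with $\leq C$ orbits) is clopen. If one can show that every $H_n$ lies in $\Sub(\Aut(T))_{\leq C}^{0}$, then closedness of this set forces $H$ to be unimodular as well, and Corollary~\ref{corollary:tree} concludes. Either way, the crux is the same: the algebraic condition $H_n = H_n^{(\infty)}$ must be leveraged to deduce unimodularity, after which the closedness is a formal consequence of the already-established machinery of $k$-closures and Chabauty convergence.
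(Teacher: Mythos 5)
Your proposal follows essentially the same route as the paper's proof: membership of the limit in $\Sub(\Aut(T))_{\leq C}$ via Proposition~\ref{proposition:CLOPEN}~(3), unimodularity of each $H_n$ deduced from $H_n = H_n^{(\infty)}$ through the modular character, passage of unimodularity to the limit via Theorem~\ref{theorem:Unimod}, and conclusion by Corollary~\ref{corollary:tree}. Your alternative route via Corollary~\ref{corollary:tree:common-lattice2} is not genuinely different, since placing the $H_n$ in $\Sub(\Aut(T))_{\leq C}^{0}$ still requires first proving their unimodularity by the same argument.

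One micro-step as you state it is inaccurate: the image of the modular character of a cocompact subgroup of $\Aut(T)$ need \emph{not} be a discrete subgroup of $\R_{>0}$, so ``discrete, hence infinite cyclic'' fails in general. The values of the modular function are ratios of indices of compact open subgroups, so the image is a finitely generated subgroup of $\Q_{>0}$; but a subgroup generated multiplicatively by, say, $2$ and $3$ is dense in $\R_{>0}$, and such images do occur for cocompact actions with several orbits (any finite edge-indexed graph is realizable by Bass--Kulkarni theory). The correct fix is the one your parenthetical already gestures at, and it is exactly what the paper uses: the image is a finitely generated abelian group, hence residually finite, so if non-trivial it admits a proper finite-index subgroup. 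To see that its preimage is \emph{open} in $H_n$ (not merely of finite index), note that the modular character is trivial on every compact subgroup (its image would be a compact subgroup of $\R_{>0}$), so $\ker \Delta$ contains a compact open subgroup and is therefore open; the quotient is then discrete, and the pullback of a proper finite-index subgroup of the image is a proper open finite-index subgroup of $H_n$, contradicting $H_n = H_n^{(\infty)}$. With this repair your argument coincides with the paper's.
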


\begin{proof}
Let $H_n \to H$ be a converging sequence in $\Sub(\Aut(T))$ with $H_n \in \Sub(\Aut(T))_{\leq C}^{(\infty)}$ for each $n$. We already know by Proposition~\ref{proposition:CLOPEN} (3) that $H \in \Sub(\Aut(T))_{\leq C}$. For each $n \geq 1$, $H_n$ acts cocompactly on $T$ and is thus compactly generated. Therefore, the image of the modular character of $H_n$ is a finitely generated subgroup of $\R$, which is thus residually finite. In particular, the condition that $H_n = H_n^{(\infty)}$ implies that $H_n$ is unimodular. By Theorem~\ref{theorem:Unimod}, $H$ is also unimodular and we can apply Corollary~\ref{corollary:tree} to get $H = H^{(\infty)}$, as required. This confirms that $\Sub(\Aut(T))_{\leq C}^{(\infty)}$ is closed.
\end{proof}

There remains to show that any group in $\Sub(\Aut(T))_{\leq C}^{(\infty)}$ is a limit of abstractly simple groups in that same set. Before proving this we need two more technical results.

\begin{lemma}\label{lemma:invariant-subtree}
Let $T$ be a locally finite tree all of whose vertices have degree~$\geq 2$ and $H \leq \Aut(T)$ be a closed subgroup without any infinite cyclic discrete quotient (e.g. $H = H^{(\infty)}$). If $H$ acts cocompactly on $T$, then it does not preserve any proper non-empty subtree and does not fix any end of $T$.
\end{lemma}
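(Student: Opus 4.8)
The plan is to prove the two conclusions separately, since they rest on different mechanisms: the statement about invariant subtrees is pure tree geometry combined with cocompactness (the hypothesis on quotients plays no role there), whereas the statement about fixed ends is exactly where the absence of an infinite cyclic discrete quotient enters. For the subtree, I would suppose $H$ preserves a non-empty subtree $T' \subseteq T$ and consider $f \colon V(T) \to \N$, $f(v) = d(v, T')$. Since $H$ preserves $T'$, the function $f$ is $H$-invariant, and since $H$ has finitely many orbits of vertices, $f$ takes finitely many values and is bounded, say by $R = \max_v f(v)$. If $T'$ were proper then $R \geq 1$ (a connected subtree containing every vertex of $T$ automatically contains every edge, hence equals $T$). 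Picking a vertex $u$ with $f(u) = R$, let $u_1$ be the neighbour of $u$ on the geodesic toward the nearest-point projection of $u$ on $T'$; using that subtrees are convex, any \emph{other} neighbour $u_2$ of $u$ has its geodesic to $T'$ forced through $u$, so $f(u_2) = R+1$, contradicting maximality of $R$. Hence $u$ has no second neighbour, i.e.\ $\deg(u) = 1$, contradicting the assumption that all degrees are $\geq 2$. Therefore $R = 0$ and $T' = T$.

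For the fixed end, I would suppose $H$ fixes an end $\xi$ and build the translation homomorphism. Fixing a base vertex $v_0$, a ray toward $\xi$ and the associated Busemann function $b_\xi \colon V(T) \to \Z$, every element fixing $\xi$ satisfies $b_\xi \circ g = b_\xi + t(g)$ for a unique $t(g) \in \Z$, and the cocycle identity shows $t \colon H \to \Z$ is a homomorphism. The open subgroup $H_{v_0}$ lies in $\ker t$, because an element fixing both $v_0$ and $\xi$ fixes the ray $[v_0,\xi)$ pointwise; thus $t$ is continuous with open kernel. Now split into two cases. If $t(H) \neq 0$, then $t$ yields a continuous surjection of $H$ onto an infinite cyclic subgroup of $\Z$ with open (hence clopen) kernel, i.e.\ an infinite cyclic discrete quotient, contradicting the hypothesis. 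If $t(H) = 0$, then every element of $H$ fixes $b_\xi$ and so preserves each horocycle; every $H$-orbit of vertices is then contained in a single level set of $b_\xi$. Because $T$ has no leaves one may travel away from $\xi$ indefinitely, so $b_\xi$ attains infinitely many values, there are infinitely many non-empty level sets, and hence infinitely many $H$-orbits, contradicting cocompactness.

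The routine ingredients are the boundedness of $f$ and the cocycle/continuity properties of $t$. The steps that need genuine care are the two dichotomies. In the subtree argument the crux is the convexity-and-projection computation giving $f(u_2) = R+1$: this is precisely where cocompactness (boundedness of $f$) and leaflessness (degree $\geq 2$) are jointly used, and one must also check by convexity that $u_2 \notin T'$ so that the distance really increases. In the end argument the main obstacle is recognising that the \emph{degenerate} case $t(H) = 0$ is genuinely possible and must be excluded on its own terms — the observation that a group fixing an end with trivial translation homomorphism permutes horocycles trivially, and therefore cannot act cocompactly on a leafless tree, is the point where the minimal-degree hypothesis rescues the argument.
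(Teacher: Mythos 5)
Your proof is correct, and it diverges from the paper in an instructive way on its first half. For the invariant-subtree statement the paper simply cites Tits (Lemme~4.1 of \emph{Sur le groupe des automorphismes d'un arbre}), whereas you give a self-contained elementary argument via the $H$-invariant function $f(v) = d(v, T')$, which takes finitely many values by cocompactness and whose maximum forces a leaf; your convexity step, that every neighbour $u_2 \neq u_1$ of a maximizing vertex $u$ satisfies $f(u_2) = f(u) + 1$, is exactly the point needing care, and it holds (since $T'$ is connected, avoids $u$, and contains the projection $p$, it lies entirely in the component of $u_1$ in $T \setminus \{u\}$, so every path from $u_2$ to $T'$ passes through $u$). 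For the fixed-end statement your route is essentially the paper's: the paper defines $\phi(h) = \lim_{n \to \infty} d(h(v_n), v_n)$ along a ray toward the fixed end and asserts it is a homomorphism with infinite image ``because $H$ acts cocompactly''; your signed Busemann cocycle $t$ is the cleaner formulation of the same homomorphism, and your explicit case split spells out what the paper compresses into that parenthetical. One small correction to your commentary: in the degenerate case $t(H) = 0$ you do not actually need the degree hypothesis. The level sets $b_\xi^{-1}(-n)$ are already non-empty for every $n \geq 0$, since they contain the vertices $v_n$ of the ray toward $\xi$; so triviality of $t$ together with cocompactness is already contradictory without travelling away from $\xi$. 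The hypothesis that all vertices have degree $\geq 2$ is genuinely needed only in the subtree half (a finite tree, or a single edge, gives counterexamples to that half), so your closing sentence slightly misattributes where that hypothesis does its work — but this does not affect the validity of the proof.
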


\begin{proof}
Since all vertices of $T$ have degree~$\geq 2$ and $H$ acts cocompactly on $T$, we deduce from~\cite{Tits_arbre}*{Lemme~4.1} that $H$ does not preserve any non-empty subtree of $T$.

Suppose now for a contradiction that $H$ fixes some end $b \in \bd T$. Let $(v_n)$ be the sequence of vertices on a ray in $T$ toward $b$. Then the map $\phi \colon H \to \Z$ defined by $\phi(h) := \lim_{n \to \infty} d(h(v_n), v_n)$ is a group homomorphism and has infinite image (because $H$ acts cocompactly on $T$), which contradicts the fact that $H$ has no infinite cyclic discrete quotient. 
\end{proof}

In the following proposition and as in \cite{Banks}, given $J \leq \Aut(T)$ and $k > 0$, the symbol $J^{+_k}$ denotes the subgroup of $J$ generated by the pointwise stabilizers of $(k-1)$-balls around edges of $T$.

\begin{proposition}\label{proposition:BEW}
Let $T$ be a locally finite tree and $G \leq \Aut(T)$ be a non-discrete group which acts cocompactly on $T$, does not preserve any proper non-empty subtree and does not fix any end of $T$. Suppose that $G = \closure{k}{G}$ for some $k \geq 0$. Then $G^{+_k}$ is abstractly simple and $G/G^{+_k}$ is virtually free.
\end{proposition}

\begin{proof}
From \cite{Banks}*{Theorem~7.3} we know that $G^{+_k}$ is abstractly simple or trivial. Also, it is clear from the definition that $G^{+_k}$ is an open normal subgroup of $G$. Since $G$ is non-discrete, $G^{+_k}$ is non-discrete and in particular non-trivial (hence simple).

The discrete quotient group $G/G^{+_k}$ acts cocompactly on the quotient graph $G^{+_k}\backslash T$. Bass--Serre theory ensures that $G^{+_k}$ is the fundamental group of a graph of groups, whose underlying graph is nothing but $G^{+_k} \backslash T$ (see~\cite{Serre}*{\S I.5.4, Th\'eor\`eme 13}). By definition  $G^{+_k}$ is generated by pointwise stabilizers of edges. In particular it is generated   by   vertex stabilizers. It then follows that the quotient graph $G^{+_k} \backslash T$ is a tree (see~\cite{Serre}*{\S I.5.4, Corollaire 1}). We next observe that the $G/G^{+_k}$-action on the tree $G^{+_k} \backslash T$ is proper. Indeed, a coset $g G^{+_k}$ stabilizes a vertex in $G^{+_k} \backslash T$ if and only if $gv \in G^{+_k} v$ for some $v \in V(T)$. This is equivalent to the requirement that $g \in G^{+_k} U$, where $U$ is the stabilizer of $v$ in $G$, which is compact. This confirms that the stabilizer of a vertex of $G^{+_k} \backslash T$ in the discrete quotient group $G/G^{+_k}$ is indeed compact, hence finite. Therefore $G/G^{+_k}$ is a discrete group acting properly and cocompactly on a tree. It is thus virtually free.
\end{proof}

\begin{proposition}\label{proposition:DenseSimple}
Let $T$ be a locally finite tree all of whose vertices have degree~$\geq 2$ and let $C >0$. In $\Sub(\Aut(T))_{\leq C}^{(\infty)}$,
the subset consisting of the abstractly simple groups is dense. 
\end{proposition}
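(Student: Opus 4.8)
The plan is to show that any $H \in \Sub(\Aut(T))_{\leq C}^{(\infty)}$ can be approximated in the Chabauty topology by abstractly simple groups lying in the same set. The natural candidates for the approximating sequence are the $k$-closures $\closure{k}{H}$, which by Lemmas~\ref{lemma:inter}~(1) and~\ref{lemma:k-closure} converge to $\overline{H} = H$ (recall $H$ is closed). These are, however, typically not themselves simple, so the real approximating groups will be their derived-type subgroups $(\closure{k}{H})^{+_k}$, produced by Proposition~\ref{proposition:BEW}. The first task is therefore to verify that the hypotheses of Proposition~\ref{proposition:BEW} apply to $G = \closure{k}{H}$ for all large $k$. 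Since $H = H^{(\infty)}$, Lemma~\ref{lemma:invariant-subtree} tells us $H$ preserves no proper non-empty subtree and fixes no end; I would need to check these two geometric properties pass to $\closure{k}{H}$ (they should, since $\closure{k}{H} \supseteq H$ acts cocompactly and these are conditions about the existence of invariant configurations, which only get harder to satisfy for a larger group — more carefully, $\closure{k}{H}$ still acts cocompactly and contains $H$, so one reruns Lemma~\ref{lemma:invariant-subtree} for $\closure{k}{H}$ itself once one knows it has no infinite cyclic discrete quotient). I also need $\closure{k}{H}$ to be non-discrete: if $H$ were discrete then by Corollary~\ref{corollary:discrete} we would have $H = \closure{k}{H}$ for large $k$, and a discrete group equal to its own $\closure{k}{\cdot}$ satisfying $H = H^{(\infty)}$ must be trivial, so one handles the discrete case separately (and there it is vacuous or reduces to the trivial group).

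Granting the hypotheses, Proposition~\ref{proposition:BEW} gives that $G_k := (\closure{k}{H})^{+_k}$ is abstractly simple and that the quotient $\closure{k}{H}/G_k$ is virtually free. I would then set up the approximating sequence as $(G_k)_{k}$ and argue it converges to $H$. Because $G_k \leq \closure{k}{H}$ and $\closure{k}{H} \to H$, the upper semicontinuity condition (i) of Lemma~\ref{lemma:converge} is automatic for the $G_k$: any convergent sequence of elements $g_{k} \in G_{k} \subseteq \closure{k}{H}$ has its limit in $H$. The delicate half is condition (ii): every $h \in H$ must be a limit of elements from the $G_k$. Here the key point is that the $G_k$ are \emph{open} in $\closure{k}{H}$ (being of the form $J^{+_k}$) and contain the stabilizers of large balls; concretely, given $h \in H \subseteq \closure{k}{H}$, one wants to correct $h$ by an element that agrees with it on a large ball and lies in $G_k$. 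Since $G_k$ contains all pointwise stabilizers of $(k-1)$-balls around edges, and since $H$ acts cocompactly, I expect to produce for each $h$ an element $g \in G_k$ with $g|_{B(v_0, k-1)} = h|_{B(v_0,k-1)}$, so that $g \to h$ as $k \to \infty$; establishing this containment is essentially a local surjectivity statement for $G_k$ on balls.

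I then must confirm that each $G_k$ lies in $\Sub(\Aut(T))_{\leq C}^{(\infty)}$, not merely that it is simple. That it acts with at most $C$ orbits follows because $G_k$ is a finite-index (indeed cocompact open) subgroup of $\closure{k}{H} \supseteq H$, and one controls orbit counts via the virtually free quotient together with Proposition~\ref{proposition:CLOPEN}~(3); some care is needed since $\closure{k}{H}$ may a priori have more vertex orbits than $H$, but for large $k$ the quotient graph $\closure{k}{H}\backslash T$ stabilizes to $H \backslash T$ (as $\closure{k}{H} \to H$ and the orbit-count is Chabauty-locally-constant), keeping us inside $\Sub(\Aut(T))_{\leq C}$. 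The condition $G_k = G_k^{(\infty)}$ is immediate since $G_k$ is abstractly simple, hence topologically simple with no proper open finite-index subgroup.

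I expect the main obstacle to be the verification of condition (ii) of Lemma~\ref{lemma:converge}, i.e.\ the local surjectivity of $G_k = (\closure{k}{H})^{+_k}$ on balls of radius roughly $k$. The subtlety is that $G_k$ is generated by stabilizers of $(k-1)$-balls \emph{around edges}, and one must show this generating set already realizes, on the ball $B(v_0, k-1)$, every local action that $H$ (equivalently $\closure{k}{H}$) realizes there; put differently, one must rule out that passing from $\closure{k}{H}$ to its ``$+$''-subgroup loses local behaviour near $v_0$. I anticipate resolving this using that $\closure{k}{H}/G_k$ acts on the \emph{tree} $G_k \backslash T$ (established inside the proof of Proposition~\ref{proposition:BEW}), so that cosets are distinguished only by a tree-like, ``global'' invariant rather than by any local discrepancy on a fixed ball; this lets one absorb the coset ambiguity at the cost of modifying $h$ far away from $v_0$, which does not affect convergence.
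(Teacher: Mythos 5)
Your skeleton agrees with the paper's proof: approximate $H$ by $H_k := (\closure{k}{H})^{+_k}$, check the hypotheses of Proposition~\ref{proposition:BEW} via Lemma~\ref{lemma:invariant-subtree} (invariant subtrees and fixed ends of the larger group $\closure{k}{H}$ would be invariant under $H$), and rule out discreteness using $H = H^{(\infty)}$ (discrete cocompact subgroups of $\Aut(T)$ are virtually free, hence residually finite, so they satisfy $H^{(\infty)} = \{1\} \neq H$ --- this is the clean way to dispose of your discrete case, rather than the detour through Corollary~\ref{corollary:discrete}). But there is a genuine gap exactly where you flag ``the main obstacle'': condition~(ii) of Lemma~\ref{lemma:converge}. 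The local surjectivity of $H_k$ on balls that you hope for is not a technicality that can be absorbed by ``modifying $h$ far from $v_0$''; nothing in your sketch produces, for $h \in H$, an element of the coset $hH_k$ fixing $B(v_0,k-1)$ pointwise, and the fact that $H_k$ contains fixators of $(k-1)$-balls around edges does not by itself say those fixators realize the local action of $H$. Crucially, your argument never uses the hypothesis $H = H^{(\infty)}$ at this step, yet it must: the mechanism only works for groups with no finite discrete quotients (for instance, if $H$ is a free uniform lattice, then $H = \closure{k}{H}$ for large $k$ while $(\closure{k}{H})^{+_k}$ is trivial, so the convergence fails badly --- here $H$ is discrete, but it shows the ``$+_k$''-construction can lose everything, not just far-away behaviour). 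The paper's key observation, which your remark about the quotient tree $H_k \backslash T$ gestures at but never lands on, is algebraic rather than local: $\closure{k}{H}/H_k$ is virtually free, hence \emph{residually finite}, so the image of $H$ in this discrete quotient is a discrete quotient of $H$ all of whose finite quotients must be trivial (as $H = H^{(\infty)}$), whence that image is trivial and $H \leq H_k \leq \closure{k}{H}$. With this sandwich, condition~(ii) is satisfied by constant sequences, condition~(i) follows from $\closure{k}{H} \to H$, and no local surjectivity statement is needed.

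The same missing inclusion undermines your membership argument as well. You assert that $H_k$ is of \emph{finite index} in $\closure{k}{H}$, but Proposition~\ref{proposition:BEW} only gives that $\closure{k}{H}/H_k$ is virtually free, which may well be infinite; a priori $H_k$ could even have infinitely many orbits of vertices, so neither $H_k \in \Sub(\Aut(T))_{\leq C}$ nor cocompactness of $H_k$ is available by your route. Once $H \leq H_k$ is known, both points are immediate: $H_k$ has at most as many vertex orbits as $H$, hence lies in $\Sub(\Aut(T))_{\leq C}$, and $H_k = H_k^{(\infty)}$ because $H_k$ is abstractly simple and non-trivial (using Lemma~\ref{lemma:normal}, a proper open finite-index subgroup would yield a proper open normal subgroup). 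In short, your proposal is on the right track but misses the one idea --- residual finiteness of the virtually free quotient $\closure{k}{H}/H_k$ combined a second time with $H = H^{(\infty)}$ --- that simultaneously closes both gaps.
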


\begin{proof}
Pick any $H \in \Sub(\Aut(T))_{\leq C}^{(\infty)}$. We must show that $H$ is a limit of abstractly simple groups contained in $\Sub(\Aut(T))_{\leq C}^{(\infty)}$. For each $k > 0$, set $H_k = (\closure{k}{H})^{+_k}$. First note that $H$ is not discrete, otherwise it would be virtually free, hence residually finite, contradicting $H = H^{(\infty)}$. We can therefore invoke Lemma~\ref{lemma:invariant-subtree} and Proposition~\ref{proposition:BEW} (applied to $\closure{k}{H}$) to get that $H_k$ is abstractly simple and $\closure{k}{H}/H_k$ is virtually free.

Since $\closure{k}{H}/H_k$ is virtually free, it is residually finite. Recalling now that $H$ has no finite discrete quotient other than the trivial one, we infer that $H$ has trivial image in $\closure{k}{H}/H_k$, so that $H \leq H_k \leq \closure{k}{H}$. Since $\closure{k}{H} \to H$ (by Lemma~\ref{lemma:k-closure} and Lemma~\ref{lemma:inter} (1)), we also get that $H_k \to H$, thereby completing the proof. 
\end{proof}

\begin{proof}[Proof of Theorem~\ref{theorem:LimitsOfSimple2}]
Follows by assembling Propositions~\ref{proposition:BoundedCovol} and~\ref{proposition:DenseSimple}.
\end{proof}

\begin{remark}\label{rem:BuMo}
It is important to note that the set 
$$\{H \in \Sub({\Aut(T)}) \mid H \text{ is locally $2$-transitive and } H = H^{(\infty)}\}$$ 
(in the terminology of \cite{BM}) may contain groups that are not topologically simple. Explicit examples of such $H$ are constructed in \cite{BM}*{Example~1.2.1}, where $T$ is regular of degree $p^2 + p +1$ ($p$ being an arbitrary prime). In particular, the set of topologically simple locally $2$-transitive closed subgroups of $\Aut(T)$ is generally not Chabauty-closed.
\end{remark}

The following result shows that the conclusion of Theorem~\ref{theorem:LimitsOfSimple} may fail if the tree $T$ is allowed to have vertices of degree~$1$.

\begin{lemma}\label{lemma:Valency1}
Let $T$ be the universal covering tree of the graph on $7$ vertices depicted in the figure below. Let $V_1, V_3$ and $V_8$ denote the set of vertices of $T$ of degree $1$, $3$ and~$8$ respectively. Let $X$ be the subtree of $T$ which is the convex hull of $V_3$. Thus $X$ is isomorphic to the trivalent tree. Its vertex set is $V_3 \cup V_8$, and those two sets $V_3$ and $V_8$ are the two parts in the canonical bipartition of $X$. The following assertions hold.
\begin{enumerate}[(1)]
\item $\Aut(T)$ has a closed subgroup $H$ isomorphic to 
$$\left(\prod_{v \in V_8} \Alt(5)\right)\rtimes \Aut(X)^+.$$ 
\item $H = H^{(\infty)}$. 
\item $H$ is not a Chabauty limit of topologically simple closed subgroups of $\Aut(T)$.
\end{enumerate}
\end{lemma}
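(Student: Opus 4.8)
The plan is to construct the tree $T$ explicitly from the seven-vertex graph, understand the structure of $\Aut(T)$ via its action on the subtree $X$, and then verify each of the three assertions in turn, with the crucial work concentrated in assertions (2) and (3).

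First I would set up the combinatorial picture. The universal cover $T$ of the depicted graph is a tree in which the degree-$3$ vertices of $V_3$ and the degree-$8$ vertices of $V_8$ alternate along the subtree $X$ (which, being the convex hull of $V_3$, is bi-regular of degrees $3$ and $8$ and hence isomorphic to the trivalent tree after collapsing), while each vertex of $V_8$ additionally carries some pendant edges leading to leaves in $V_1$. The key observation is that the leaves hanging off a given $V_8$-vertex $v$ can be permuted by automorphisms of $T$ that fix everything else; the group depicted is $\Alt(5)$, so the construction must arrange that each $v \in V_8$ has exactly $5$ pendant leaves permuted by an $\Alt(5)$, and these local alternating groups commute with one another and are permuted by $\Aut(X)^+$ acting on $V_8$. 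This gives the semidirect product description and proves \textbf{(1)}: one defines $H$ to be the subgroup generated by these local $\Alt(5)$'s together with a suitable lift of $\Aut(X)^+$, and checks it is closed (it is the full stabilizer in $\Aut(T)$ of the relevant structure, hence closed).

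For \textbf{(2)} I must show $H = H^{(\infty)}$, i.e. $H$ has no proper open subgroup of finite index, equivalently (by Lemma~\ref{lemma:normal}) no proper open normal subgroup of finite index. The strategy is to show that any such quotient must be trivial by exploiting the two tensor factors. On the one hand, $\Aut(X)^+$ is (topologically) simple by Tits' theorem \cite{Tits_arbre}, so it admits no nontrivial finite quotient; on the other hand, the restricted product $\prod_{v \in V_8}\Alt(5)$ is generated by copies of the perfect simple group $\Alt(5)$, which likewise has no nontrivial abelian or finite quotient compatible with the $\Aut(X)^+$-action permuting the factors transitively. A finite quotient of $H$ would restrict to a finite quotient of the normal closure of the $\Alt(5)$-part and to a finite quotient of a complement mapping onto $\Aut(X)^+$; since both pieces are forced to die, $H = H^{(\infty)}$. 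I would phrase this cleanly using that $H^{(\infty)}$ contains both $\prod_{v} \Alt(5)$ and (a conjugate of) $\Aut(X)^+$ because each is generated by elements lying in no proper finite-index subgroup.

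The main obstacle, and the real content, is \textbf{(3)}: showing $H$ is \emph{not} a Chabauty limit of topologically simple groups. Here the presence of degree-$1$ vertices is essential and is exactly what makes Theorem~\ref{theorem:LimitsOfSimple} fail. The plan is to extract a Chabauty-invariant (or at least Chabauty-closed) property that $H$ violates but every topologically simple cocompact subgroup must satisfy. The natural candidate is the behaviour on the leaves $V_1$: any topologically simple group acting cocompactly cannot fix the proper nonempty subtree $X$, yet — by an argument parallel to Lemma~\ref{lemma:invariant-subtree} but accounting for degree-$1$ vertices — I expect that the leaves force every such limit group to preserve $X$, or more precisely that the local structure at $V_8$-vertices (an $\Alt(5)$ acting on $5$ leaves, which is \emph{not} $2$-transitive in the relevant sense, or fails to generate enough) obstructs simplicity. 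Concretely I would argue: suppose $H_n \to H$ with each $H_n$ topologically simple and in $\Sub(\Aut(T))_{\leq C}$; using Proposition~\ref{proposition:convergence_k} and the $k$-closure machinery, the local action of $H_n$ near $H$ is constrained, and one derives that $H_n$ must either preserve $X$ (contradicting topological simplicity via Lemma~\ref{lemma:invariant-subtree} applied within $X$, which has all degrees $\geq 3$) or develop a nontrivial closed normal subgroup supported on the leaf directions. The delicate point is ruling out that a simple group could approximate the leaf-permuting $\Alt(5)$-behaviour of $H$; I expect to resolve this by showing that the fixator of $X$ in any cocompact subgroup is a closed normal-ish subgroup whose nontriviality near $H$ is incompatible with simplicity, so that the limit of simple groups would have to act trivially on the leaves, contradicting $H_n \to H$ since $H$ acts as $\Alt(5)$ on each leaf-cluster. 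Pinning down this incompatibility rigorously, rather than heuristically, is where the argument will require the most care.
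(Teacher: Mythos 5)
Your parts (1) and (2) are essentially the paper's argument in outline. For (1), note that $X$ is literally $3$-regular (each $V_8$-vertex has exactly three neighbors in $V_3$), so no collapsing is needed; the paper defines $H = KS$ where $K \cong \prod_{v \in V_8}\Alt(5)$ is the characteristic subgroup of the compact kernel $\prod_{v\in V_8}\Sym(5)$ of $\Aut(T)\to\Aut(X)$, and $S\cong\Aut(X)^+$ is a closed lift. For (2), your sketch should be tightened at one point: the full product $\prod_{v\in V_8}\Alt(5)$ \emph{does} have plenty of finite quotients, so ``no finite quotient compatible with the action'' is not by itself a usable principle, and the claim that $H^{(\infty)}$ contains the factors ``because each is generated by elements lying in no proper finite-index subgroup'' is circular as stated. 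The correct mechanism is openness: if $N\unlhd_{\text{ofi}} H$ is open, then $N\cap K$ is an \emph{open} normal subgroup of the compact product $K$, hence contains all but finitely many $\Alt(5)$-factors; transitivity of the conjugation action of $S$ on $V_8$ then forces $K\leq N$, and $H/N$ is a finite quotient of the simple group $\Aut(X)^+$, hence trivial.

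The genuine gap is in (3), which is the decisive part. Your proposed dichotomy is wrong: $X$ is the convex hull of $V_3$, hence canonically $\Aut(T)$-invariant, so \emph{every} closed subgroup of $\Aut(T)$ preserves $X$ -- including topologically simple ones, such as the closed lift $S\cong\Aut(X)^+$ itself, which is simple and acts cocompactly. Thus ``preserving $X$'' cannot contradict simplicity, and Lemma~\ref{lemma:invariant-subtree} is unavailable in $T$ precisely because of the leaves. The correct statement, which your sketch never reaches, is about \emph{faithfulness}: the kernel of $\Aut(T)\to\Aut(X)$ is compact, so an infinite topologically simple closed $J\leq\Aut(T)$ meets it trivially (otherwise $J$ would be compact, hence profinite, hence not topologically simple) and therefore acts faithfully on $X$. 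The incompatibility you left heuristic is then a \emph{local prime-content} obstruction, not a normal-subgroup one: if $J$ is Chabauty-close to $H$, then $J$ contains an element agreeing on $B(v,1)$ with an element of $K$ that fixes some $v\in V_8$ and $5$-cycles its five leaf-neighbors, so the compact stabilizer $J_v$ has a non-trivial $5$-Sylow subgroup; but vertex stabilizers in the automorphism group of the trivalent tree $X$ are pro-$\{2,3\}$, so $J_v$ cannot embed continuously into one, contradicting faithfulness. Note also that this argument needs no cocompactness or bounded-orbit hypothesis on the approximating groups, whereas your formulation quietly weakened assertion (3) by assuming $H_n\in\Sub(\Aut(T))_{\leq C}$ and invoked Proposition~\ref{proposition:convergence_k}, whose hypothesis (a common cocompact $\Gamma$ conjugated into every $H_n$) you never secured; the paper's proof bypasses all of that machinery with the single basic Chabauty neighborhood forcing the local $5$-cycle.
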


\begin{center}
\psset{xunit=2.0cm,yunit=2.0cm,algebraic=true,dimen=middle,dotstyle=o,dotsize=5pt 0,linewidth=0.8pt,arrowsize=3pt 2,arrowinset=0.25}
\begin{pspicture*}(-1.2,-1.2)(1.2,1.2)
\psline[linewidth=1.2pt](0.,1.)(0.,0.)
\psline[linewidth=1.2pt](0.,0.)(-1.,-1.)
\psline[linewidth=1.2pt](0.,0.)(-0.5,-1.)
\psline[linewidth=1.2pt](0.,0.)(1.,-1.)
\psline[linewidth=1.2pt](0.,0.)(0.5,-1.)
\parametricplot[linewidth=1.2pt]{1.5707963267948966}{4.71238898038469}{1.*0.5*cos(t)+0.*0.5*sin(t)+0.|0.*0.5*cos(t)+1.*0.5*sin(t)+0.5}
\parametricplot[linewidth=1.2pt]{1.5707963267948966}{4.71238898038469}{-1.*0.5*cos(t)+0.*0.5*sin(t)+0.|0.*0.5*cos(t)+1.*0.5*sin(t)+0.5}
\psline[linewidth=1.2pt](0.,0.)(1.,-1.)
\psline[linewidth=1.2pt](0.,0.)(-0.5,-1.)
\psline[linewidth=1.2pt](0.,0.)(0.,-1.)
\begin{scriptsize}
\psdots[dotstyle=*](0.,0.)
\psdots[dotstyle=*](0.,1.)
\psdots[dotstyle=*](0.,-1.)
\psdots[dotstyle=*](-0.5,-1.)
\psdots[dotstyle=*](-1.,-1.)
\psdots[dotstyle=*](1.,-1.)
\psdots[dotstyle=*](0.5,-1.)
\psdots[dotstyle=*](0.,0.)
\psdots[dotstyle=*](1.,-1.)
\psdots[dotstyle=*](0.5,-1.)
\psdots[dotstyle=*](-0.5,-1.)
\end{scriptsize}
\end{pspicture*}
\end{center}

\begin{proof}
\begin{enumerate}[(1)]
\item The subtree $X$ is $\Aut(T)$-invariant. Thus we have a canonical continuous homomorphism $\Aut(T) \to \Aut(X)$. Its kernel is compact and isomorphic to $\prod_{v \in V_8} \Sym(5)$. It contains a characteristic subgroup $K$ isomorphic to $\prod_{v \in V_8} \Alt(5)$. Moreover $\Aut(T)$ has a closed subgroup $S$ isomorphic to $\Aut(X)^+$. The requested subgroup $H$ can be defined as $H = KS$. 

\item Let $N$ be an open normal subgroup of finite index in $H = KS$. Then $N \cap K$ is an open normal subgroup of $K$, and thus contains all but finitely many factors of $\prod_{v \in V_8} \Alt(5)$ (i.e.\ $N \cap K \supseteq \prod_{v \in I} \Alt(5)$ for some cofinite set $I \subseteq V_8$). Since the conjugation action of $H$ is transitive on those factors, we infer that $N$ contains them all. Hence $K \leq N$. Thus the quotient map $H \to H/N$ factors through $H/K \cong S$, which is simple by \cite{Tits_arbre}. Hence $H/N$ is trivial, which confirms that $H = H^{(\infty)}$. 

\item Any infinite topologically simple subgroup of $\Aut(T)$ acts faithfully on $X$. On the other hand, the group $H$ contains an element $h$ fixing a vertex $v \in V_8$ and permuting cyclically the $5$ neighbors of $v$ with degree~$1$. Any closed subgroup $J$ of $\Aut(T)$ which is sufficiently close to $H$ in the Chabauty topology also contains elements fixing $v$ with the same action on its neighbors. In particular the stabilizer $J_v$ has a non-trivial $5$-Sylow subgroup. Since every vertex stabilizer in $\Aut(X)^+$ is a pro-$\{2, 3\}$ group, we deduce that $J$ does not act faithfully on $X$ and is thus not topologically simple. \qedhere
\end{enumerate}
\end{proof}

\subsection{Boundary-\texorpdfstring{$2$}{2}-transitive automorphism groups of trees}\label{section:2-transitive}

Recall that the \textbf{monolith} $\Mon(G)$ of a topological group $G$ is defined to be the (possibly trivial) intersection of all its non-trivial closed normal subgroups. It is clear from Lemma~\ref{lemma:normal} that, when $G$ is infinite, $\Mon(G) \leq G^{(\infty)}$ (because an open subgroup is always closed). If moreover $G$ is totally disconnected and locally compact, then it appears that $\Mon(G) = G^{(\infty)}$ as soon as $\Mon(G)$ is cocompact in $G$.

\begin{lemma}
Let $G$ be a totally disconnected locally compact group. If $G/\Mon(G)$ is compact, then $G^{(\infty)} \leq \Mon(G)$.
\end{lemma}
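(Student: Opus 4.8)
The plan is to argue directly with the single quotient $Q := G/\Mon(G)$, exploiting that the compactness hypothesis makes $Q$ profinite and hence forces its intersection of open finite-index subgroups to be trivial; pulling this back through the projection then pins $G^{(\infty)}$ inside $\Mon(G)$. As a preliminary I would record the elementary structural facts about the monolith. Being an intersection of closed normal subgroups, $\Mon(G)$ is itself a closed normal subgroup of $G$, so $Q = G/\Mon(G)$ is a Hausdorff topological group and the canonical projection $\pi\colon G\to Q$ is continuous, open, and has kernel exactly $\Mon(G)$. As a quotient of the totally disconnected locally compact group $G$ by a closed normal subgroup, $Q$ is again totally disconnected and locally compact, and by hypothesis it is moreover compact. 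A compact totally disconnected locally compact group is profinite (van Dantzig's theorem), so it admits a basis of identity neighbourhoods consisting of open normal subgroups of finite index, whose intersection is trivial. In particular $Q^{(\infty)} = \{1\}$.

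Next I would transfer this triviality back to $G$ through $\pi$. For every open finite-index subgroup $P \leq_{\text{ofi}} Q$, the preimage $\pi^{-1}(P)$ is an open finite-index subgroup of $G$, since continuity preserves openness and $[G:\pi^{-1}(P)] = [Q:P]$. Hence $G^{(\infty)} \leq \pi^{-1}(P)$ for each such $P$, by the very definition of $G^{(\infty)}$ as the intersection of all open finite-index subgroups (equivalently, one may invoke Lemma~\ref{lemma:normal}). Intersecting over all such $P$ and using that preimages commute with intersections yields
$$G^{(\infty)} \;\leq\; \bigcap_{P \leq_{\text{ofi}} Q} \pi^{-1}(P) \;=\; \pi^{-1}\!\Big(\bigcap_{P \leq_{\text{ofi}} Q} P\Big) \;=\; \pi^{-1}\big(Q^{(\infty)}\big) \;=\; \pi^{-1}(\{1\}) \;=\; \Mon(G),$$
which is exactly the asserted inclusion.

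Since the argument is short, there is no serious obstacle; the content is concentrated in two standard facts, namely that a compact totally disconnected locally compact group is profinite (this is what makes $Q^{(\infty)}$ trivial) and that open finite-index subgroups pull back to open finite-index subgroups. The one point to watch is that the kernel $\pi^{-1}(\{1\})$ is \emph{precisely} $\Mon(G)$, so that the final preimage lands on the monolith rather than on something larger. I expect the compactness hypothesis on $G/\Mon(G)$ to enter exactly once and to be essential: it is the only ingredient guaranteeing that $Q$ is profinite, and without it $Q^{(\infty)}$ need not be trivial and the inclusion can fail.
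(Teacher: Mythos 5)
Your proof is correct and follows essentially the same route as the paper: both arguments observe that $G/\Mon(G)$ is compact and totally disconnected, hence profinite, so the intersection of its open (normal) finite-index subgroups is trivial, and then pull this back through the quotient map to conclude $G^{(\infty)} \leq \Mon(G)$. The only cosmetic difference is that the paper phrases the pullback step via Lemma~\ref{lemma:normal} directly in terms of open normal finite-index subgroups of $G$, whereas you make the preimage computation $\pi^{-1}(Q^{(\infty)}) = \Mon(G)$ explicit.
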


\begin{proof}
By Lemma~\ref{lemma:normal}, we have
$$G^{(\infty)} = \bigcap_{N \unlhd_{\text{ofi}} G} N.$$
The group $G/\Mon(G)$ is compact by hypothesis and totally disconnected (as a quotient of a totally disconnected locally compact group by a closed subgroup), so it is profinite. In particular, the open (and hence finite index) normal subgroups of $G/\Mon(G)$ form a base of neighborhood of the identity. Their intersection is thus trivial, which implies that the intersection of all open normal subgroups of finite index of $G$ is contained in $\Mon(G)$.
\end{proof}

The previous lemma can be applied when $G$ is a boundary-$2$-transitive automorphism group of a tree, as the following result (due to M. Burger and S. Mozes) shows.

\begin{proposition}\label{proposition:BuMo}
Let $T$ be a locally finite thick semi-regular tree and $H \in \Sub(\Aut(T))$ act $2$-transitively on $\bd T$. Then $H/\Mon(H)$ is compact and $\Mon(H)$ is topologically simple. In particular $\Mon(H) = H^{(\infty)}$. 
\end{proposition}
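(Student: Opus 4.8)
The plan is to verify the three assertions in order, using the Burger--Mozes structure theory for boundary-$2$-transitive tree automorphism groups and then deducing the final equality $\Mon(H) = H^{(\infty)}$ from the preceding lemma. First I would recall that since $T$ is thick and semi-regular and $H$ acts $2$-transitively on $\partial T$, the group $H$ is a non-compact, closed, (locally) $2$-transitive subgroup of $\Aut(T)$; by \cite{BM}*{Lemma~3.1.1} its action has at most two orbits of vertices, so $H$ acts cocompactly and is compactly generated. The key input is the Burger--Mozes analysis of such groups: $H$ does not fix any end and does not preserve any proper subtree (this follows from $2$-transitivity on $\partial T$, which forces $H$ to act minimally and with no fixed end), so $H$ is a \textbf{non-degenerate} group in their sense.

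The main step is to identify $\Mon(H)$ and prove it is topologically simple and cocompact. Here I would invoke the Burger--Mozes structure theorem (\cite{BM}, specifically the results on the subgroup $H^{+}$ generated by pointwise stabilizers of edges): for a closed boundary-$2$-transitive group $H$, the subgroup $H^{+}$ is a non-trivial, closed, normal, cocompact, topologically simple subgroup, and every non-trivial closed normal subgroup of $H$ contains $H^{+}$. The second property is exactly the statement that $H^{+}$ is contained in every non-trivial closed normal subgroup, i.e.\ $H^{+} \leq \Mon(H)$; combined with the fact that $H^{+}$ is itself a non-trivial closed normal subgroup, this yields $\Mon(H) = H^{+}$. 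Thus $\Mon(H)$ is topologically simple, and since $H/H^{+}$ is discrete (indeed $H^{+}$ is open) and acts properly cocompactly on the quotient, $H/\Mon(H)$ is compact. This establishes the first two assertions.

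For the final assertion, the equality $\Mon(H) = H^{(\infty)}$, I would combine the two inclusions. Since $H$ is infinite, the general remark preceding the previous lemma gives $\Mon(H) \leq H^{(\infty)}$. For the reverse inclusion, I would apply the previous lemma: $H$ is totally disconnected and locally compact (as a closed subgroup of $\Aut(T)$), and we have just shown $H/\Mon(H)$ is compact, so the lemma yields $H^{(\infty)} \leq \Mon(H)$. The two inclusions combine to give $\Mon(H) = H^{(\infty)}$, as desired.

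The hard part will be pinning down the precise Burger--Mozes results needed and verifying their hypotheses in the present generality: one must check that $2$-transitivity on $\partial T$ genuinely forces non-degeneracy (no fixed end, no invariant proper subtree) and that the simplicity of $H^{+}$ together with the property that $H^{+}$ lies in every non-trivial closed normal subgroup both hold under these hypotheses. Once the correct citations are in place, the topological-simplicity and cocompactness of $\Mon(H)$ follow directly, and the equality with $H^{(\infty)}$ is then an immediate application of the preceding lemma together with the general inequality $\Mon(H) \leq H^{(\infty)}$ for infinite $G$.
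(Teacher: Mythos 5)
Your overall architecture is the same as the paper's, which proves the proposition simply by citing \cite{BM}*{Propositions~1.2.1 and~3.1.2, Lemma~3.1.1} and then (as you do in your last paragraph) combining the inclusion $\Mon(H) \leq H^{(\infty)}$ for infinite groups with the preceding lemma. However, the key structural statement you invoke is not what Burger--Mozes prove, and it is false as stated. Their results concern $H^{(\infty)}$ and the quasi-centre $\mathrm{QZ}(H)$, not the subgroup $H^{+}$ generated by pointwise edge stabilizers: Proposition~1.2.1 says that for $H$ closed, non-discrete and locally quasiprimitive, $H^{(\infty)}$ is minimal among closed normal cocompact subgroups and every closed normal subgroup either contains $H^{(\infty)}$ or lies in $\mathrm{QZ}(H)$, and Proposition~3.1.2 adds that boundary-$2$-transitivity forces $\mathrm{QZ}(H) = \{1\}$ and $H^{(\infty)}$ topologically simple. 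The claim you attribute to them --- that $H^{+}$ is topologically simple and contained in every non-trivial closed normal subgroup --- fails already for $H = \mathrm{PGL}_2(\mathbf{Q}_p)$ ($p$ odd) acting on its $(p+1)$-regular Bruhat--Tits tree, which is $2$-transitive on the boundary: edge fixators are Iwahori subgroups and contain elements whose determinant is a non-square unit, so $H^{+}$ is the full type-preserving subgroup, which contains $\Mon(H) = H^{(\infty)} = \mathrm{PSL}_2(\mathbf{Q}_p)$ as a proper open normal subgroup of index~$2$. In particular $H^{+}$ is not topologically simple and is not contained in the monolith. Simplicity of $H^{+}$ is a Tits-type conclusion that requires the independence property~(P), which boundary-$2$-transitive groups (notably the algebraic ones) need not satisfy.

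The gap is local and repairable: replace $H^{+}$ by $H^{(\infty)}$ throughout your second paragraph. The hypotheses of \cite{BM}*{Proposition~1.2.1} are verified exactly as you anticipate: boundary-$2$-transitivity gives local $2$-transitivity (hence local quasiprimitivity) by \cite{BM}*{Lemma~3.1.1}, and it forces vertex stabilizers to be infinite (they act transitively on spheres), so $H$ is non-discrete; also $H^{(\infty)} \neq \{1\}$ since it is cocompact and $H$ is non-compact. Then, since $\mathrm{QZ}(H)$ is trivial, every non-trivial closed normal subgroup contains $H^{(\infty)}$, and $H^{(\infty)}$ is itself a non-trivial closed normal subgroup, so $\Mon(H) = H^{(\infty)}$ directly; it is topologically simple and cocompact, whence $H/\Mon(H)$ is compact. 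Note that on this corrected route the equality $\Mon(H) = H^{(\infty)}$ comes for free from Burger--Mozes, so your third paragraph (the deduction via the lemma on compact quotients, which is how the paper phrases it) is correct but no longer strictly necessary. Your first paragraph and final step are otherwise fine.
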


\begin{proof}
This follows from \cite{BM}*{Propositions~1.2.1 and~3.1.2, Lemma~3.1.1} (see also Proposition~\ref{proposition:monolith} below).
\end{proof}

\begin{corollary}\label{corollary:2transitive}
Let $T$ be a locally finite thick semi-regular tree and $H_n \to H$ be a converging sequence in $\Sub({\Aut(T)})$ whose limit $H$ acts $2$-transitively on $\bd T$. Then we have
$$[H : \Mon(H)] \leq \limsup_{n\to \infty}\, [H_n : H_n^{(\infty)}].$$

In particular, if $H_n$ has no proper open subgroup of finite index for each $n \geq 1$ then $H$ is topologically simple.
\end{corollary}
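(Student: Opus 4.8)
The plan is to deduce Corollary~\ref{corollary:2transitive} from Theorem~\ref{theorem:simpleclosed} (via its tree consequence Corollary~\ref{corollary:tree}) together with the Burger--Mozes structure result Proposition~\ref{proposition:BuMo}. The central observation is that the hypothesis $2$-transitivity on $\bd T$ is imposed only on the \emph{limit} group $H$, so I must first transfer enough information from $H$ back to the tail of the sequence to bring the machinery of Corollary~\ref{corollary:tree} to bear. First I would record that, by \cite{BM}*{Lemma~3.1.1}, a group acting $2$-transitively on $\bd T$ automatically acts cocompactly on $T$ (indeed with at most two orbits of vertices), so $H$ is cocompact. By Proposition~\ref{proposition:BuMo}, $\Mon(H)$ is topologically simple, $H/\Mon(H)$ is compact, and $\Mon(H) = H^{(\infty)}$; in particular $H$ is unimodular (it contains the cocompact topologically simple, hence unimodular, normal subgroup $\Mon(H)$, and the compact quotient forces unimodularity of $H$).

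Once $H$ is known to be unimodular and cocompact, the hypotheses of Corollary~\ref{corollary:tree} are satisfied, and that corollary yields directly
$$[H : H^{(\infty)}] \leq \limsup_{n \to \infty}\, [H_n : H_n^{(\infty)}].$$
Combining this with the equality $\Mon(H) = H^{(\infty)}$ from Proposition~\ref{proposition:BuMo} gives the displayed inequality $[H : \Mon(H)] \leq \limsup_{n\to\infty}\,[H_n : H_n^{(\infty)}]$, which is exactly the first assertion.

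For the final clause, I would argue as follows. Suppose each $H_n$ has no proper open subgroup of finite index, i.e.\ $H_n = H_n^{(\infty)}$ for all $n$, so that $[H_n : H_n^{(\infty)}] = 1$ and the right-hand side of the inequality equals $1$. Then $[H : \Mon(H)] \leq 1$, forcing $H = \Mon(H)$, which means precisely that $H$ is topologically simple. Here it is essential that $H$ itself is assumed to act $2$-transitively on $\bd T$, so that Proposition~\ref{proposition:BuMo} applies to $H$ and guarantees both that $\Mon(H)$ is topologically simple and that the index $[H:\Mon(H)]$ is the relevant quantity; the conclusion $H = \Mon(H)$ then upgrades to topological simplicity of $H$ rather than merely of its monolith.

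The main obstacle I anticipate is verifying the applicability of Corollary~\ref{corollary:tree} to $H$, namely that $H$ is both cocompact and unimodular. Cocompactness follows cleanly from \cite{BM}*{Lemma~3.1.1}. Unimodularity is the more delicate point: rather than invoking Theorem~\ref{theorem:Unimod} on the whole sequence (which would require the $H_n$ to be unimodular, an assumption we do not have), I would derive it intrinsically from the structure of $H$ given by Proposition~\ref{proposition:BuMo} — a cocompact topologically simple normal subgroup is unimodular, and an extension of a compact group by a unimodular cocompact normal subgroup is unimodular. This keeps all unimodularity hypotheses confined to the limit $H$, matching exactly the input required by Corollary~\ref{corollary:tree} and avoiding any unwarranted assumption on the approximating groups $H_n$.
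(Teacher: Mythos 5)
Your proof is correct and takes essentially the same route as the paper, whose entire proof of this corollary is the one-line assembly of Corollary~\ref{corollary:tree} with Proposition~\ref{proposition:BuMo}. You merely make explicit the verification of the hypotheses of Corollary~\ref{corollary:tree} that the paper leaves implicit — cocompactness of $H$ via \cite{BM}*{Lemma~3.1.1} and unimodularity of $H$ as a compact extension of the unimodular (topologically simple) group $\Mon(H)$ — and that verification is sound.
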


\begin{proof}
This follows by assembling Corollary~\ref{corollary:tree} and Proposition~\ref{proposition:BuMo}.
\end{proof}

\begin{corollary}\label{corollary:2transitive2}
Let $T$ be a locally finite thick semi-regular tree. The set of topologically simple closed subgroups of $\Aut(T)$ acting $2$-transitively on $\bd T$ is closed in $\Sub(\Aut(T))$.
\end{corollary}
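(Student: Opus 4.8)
The plan is to show that the set of topologically simple closed subgroups acting $2$-transitively on $\bd T$ is closed by taking a converging sequence inside this set and verifying that its limit stays in it. First I would let $H_n \to H$ be a converging sequence with each $H_n$ topologically simple and acting $2$-transitively on $\bd T$. The crucial preliminary observation is that the property of acting $2$-transitively on $\bd T$ is itself Chabauty-closed among the groups under consideration: by \cite{BM}*{Lemma~3.1.1} such groups act with at most $2$ orbits of vertices, so $H_n \in \Sub(\Aut(T))_{\leq 2}$, which is clopen by Proposition~\ref{proposition:CLOPEN}~(3); hence $H \in \Sub(\Aut(T))_{\leq 2}$ as well. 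I would then argue that $2$-transitivity on $\bd T$ passes to the limit. Concretely, $2$-transitivity on the boundary can be reformulated as a condition on finite balls (the stabilizer of a vertex acts transitively on pairs of distinct ends, equivalently transitively on ordered pairs of edges at suitable bounded distance), and such finitary transitivity conditions are detected on bounded balls and are therefore Chabauty-open-and-closed by the same convergence mechanism used throughout Section~\ref{sec:Chabauty}; I would invoke Lemma~\ref{lemma:converge} to transport the relevant transitive actions from $H_n$ to $H$.

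Once I know $H$ itself acts $2$-transitively on $\bd T$, I would apply Corollary~\ref{corollary:2transitive}. Since each $H_n$ is topologically simple, it has no proper open subgroup of finite index, i.e.\ $H_n = H_n^{(\infty)}$, so $[H_n : H_n^{(\infty)}] = 1$ for every $n$. The corollary then gives
$$[H : \Mon(H)] \leq \limsup_{n \to \infty}\, [H_n : H_n^{(\infty)}] = 1,$$
whence $H = \Mon(H)$. By Proposition~\ref{proposition:BuMo}, $\Mon(H)$ is topologically simple, so $H$ is topologically simple, which is exactly what we need to conclude that the set is closed.

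The main obstacle I anticipate is the first step, namely establishing that $2$-transitivity on $\bd T$ is preserved under Chabauty limits. Transitivity statements are generally not automatically closed conditions, so some care is required: I would need to express boundary-$2$-transitivity as a genuinely finitary, ball-based condition so that Lemma~\ref{lemma:converge}~(ii) lets me approximate the required boundary-moving elements of $H$ by elements of $H_n$ with matching action on a large enough ball, and then use the closedness direction in a subsequence argument to land the limiting elements back inside $H$. The remaining steps are essentially bookkeeping: the reduction $H_n = H_n^{(\infty)}$ for topologically simple $H_n$ is immediate, and the passage from $H = \Mon(H)$ to topological simplicity is exactly Proposition~\ref{proposition:BuMo}. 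Thus once the closedness of the boundary-$2$-transitivity condition is secured, Corollary~\ref{corollary:2transitive} does all the real work.
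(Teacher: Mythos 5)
Your proposal is correct and follows the paper's proof essentially verbatim: the paper likewise observes that \cite{BM}*{Lemma~3.1.1} makes the set of boundary-$2$-transitive groups Chabauty-closed (and contained in $\Sub(\Aut(T))_{\leq 2}$), and then applies Corollary~\ref{corollary:2transitive} with $[H_n : H_n^{(\infty)}] = 1$ to conclude that the limit is topologically simple. One small correction to your finitary reformulation: a vertex stabilizer $H_v$ is never transitive on pairs of distinct ends of a thick tree (it preserves the distance from $v$ to the line joining the two ends); the correct ball-based characterization, which is exactly what \cite{BM}*{Lemma~3.1.1} supplies, is that $H_v$ acts transitively on each sphere $S(v,n)$ (equivalently, $H$ acts transitively on oriented geodesic segments of each length), a countable intersection of clopen conditions via Lemma~\ref{lemma:basicChabauty} --- hence closed, though not open --- which your compactness-and-subsequence use of Lemma~\ref{lemma:converge} then handles as you describe.
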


\begin{proof}
It follows easily from \cite{BM}*{Lemma~3.1.1} that the set of boundary-$2$-transitive groups is closed in $\Sub(\Aut(T))$ (and is contained in $\Sub(\Aut(T))_{\leq 2}$). Within that set, the subset of topologically simple groups is closed in view of Corollary~\ref{corollary:2transitive}.
\end{proof}

Recall that, for a locally finite thick tree $T$, we defined the space $\mathcal{S}_T$ by
$$\mathcal{S}_T := \bigslant{\left\{H \in \Sub({\Aut(T)}) \mid \text{$H$ is topologically simple and $2$-transitive on $\bd T$}\right\}}{\cong},$$
where $\cong$ is the relation of \textit{topological isomorphism}. In our context, it actually appears that two groups are topologically isomorphic if and only if they are conjugate in $\Aut(T)$ (see \cite{Radu}*{Proposition~A.1}). This equivalence enables us to show Theorem~\ref{theorem:Bd-2-trans}.

\begin{proof}[Proof of Theorem~\ref{theorem:Bd-2-trans}]
Let $$\mathcal C = \{H \in \Sub({\Aut(T)}) \mid \text{$H$ is topologically simple and $2$-transitive on $\bd T$}\}.$$
By Corollary~\ref{corollary:2transitive2}, the set $\mathcal C$ is closed in $\Sub({\Aut(T)})$ and hence compact Hausdorff.

The set $\bigslant{\mathcal{C}}{\cong}$ endowed with the quotient topology is then also compact. In order to show that this space is Hausdorff, we can simply prove that the quotient map $q \colon \mathcal{C} \to \bigslant{\mathcal{C}}{\cong}$ is open and that the set
$$\mathcal D = \{(H,H') \in \mathcal C \times \mathcal C \mid H \cong H'\}$$
is closed in $\mathcal C \times \mathcal C$.

By \cite{Radu}*{Proposition~A.1}, we have $H \cong H'$ with $H, H' \in \mathcal{C}$ if and only if $H$ and $H'$ are conjugate in $\Aut(T)$. Let $U$ be an open subset of $\mathcal{C}$. We first need to show that $q(U)$ is open, i.e.\ that $q^{-1}(q(U))$ is open. We have
$$q^{-1}(q(U)) = \bigcup_{\sigma \in \Aut(T)} \sigma U \sigma^{-1},$$
and $\sigma U \sigma^{-1}$ is clearly open for each $\sigma \in \Aut(T)$, so $q$ is an open map as wanted. Now consider two sequences $H_n \to H$ and $H'_n \to H'$ in $\mathcal{C}$ with $H_n \cong H'_n$ for all $n \geq 1$, i.e. $H'_n = \sigma_n H_n \sigma_n^{-1}$ for some $\sigma_n \in \Aut(T)$. As $H_n$ is edge-transitive, we can assume that $\sigma_n$ sends a fixed vertex $v_0$ to a vertex at distance $\leq 1$ for all $n \geq 1$. Hence, $(\sigma_n)$ subconverges to some $\sigma \in \Aut(T)$ and $H' = \sigma H \sigma^{-1}$ by Lemma~\ref{lemma:conjugate}. So $\mathcal D$ is closed in $\mathcal C \times \mathcal C$.

Finally, the fact that $\mathcal D$ is closed in $\mathcal C \times \mathcal C$ also implies that $\cong$ has closed classes.
\end{proof}
\subsection{Local prime content and local torsion-freeness}

Let $T$ be a locally finite tree. In this section, we provide applications of Corollary~\ref{corollary:tree:convergence<k-closure} by highlighting two algebraic properties that define open subsets of the   space 
$$\Sub(\Aut(T))_{\leq C}^{0} := \{H \in \Sub(\Aut(T))_{\leq C} \mid H \text{ is unimodular}\}.$$

Let $\pi$ be a set of primes. A totally disconnected locally compact group is called \textbf{locally pro-$\pi$} if it has an open pro-$\pi$ subgroup. 
If $G$ is the full automorphism group of a regular rooted tree, then the set of locally pro-$\pi$ subgroups is generally neither open nor closed in the Chabauty space $\Sub(G)$. The following result shows that this situation changes if one considers closed subgroups of bounded covolume in $\Aut(T)$.

\begin{proposition}\label{proposition:LocalPrimeContent}
Let $T$ be a locally finite tree all of whose vertices have degree~$\geq 2$ and let $C >0$. Then for any set of primes $\pi$, the set of locally pro-$\pi$ groups is open in $\Sub(\Aut(T))_{\leq C}^{0}$. 
In particular the set of discrete subgroups is open in $\Sub(\Aut(T))_{\leq C}^{0}$. 
\end{proposition}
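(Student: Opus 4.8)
The plan is to package together Proposition~\ref{proposition:locallyProPi}, which shows that local pro-$\pi$-ness is inherited by sufficiently high $k$-closures, with Corollary~\ref{corollary:tree:convergence<k-closure}, which shows that every closed subgroup lying near $H$ is conjugate into a fixed $k$-closure of $H$. Fix a set of primes $\pi$ and let $H \in \Sub(\Aut(T))_{\leq C}^{0}$ be locally pro-$\pi$; I must produce a neighborhood of $H$ in $\Sub(\Aut(T))_{\leq C}^{0}$ consisting of locally pro-$\pi$ groups. Since $H$ acts cocompactly on $T$ (having at most $C$ orbits of vertices) and possesses an open pro-$\pi$ subgroup, the last assertion of Proposition~\ref{proposition:locallyProPi} furnishes an integer $k$ for which $\closure{k}{H}$ is again locally pro-$\pi$. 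First I would fix such a $k$.

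Next I would apply Corollary~\ref{corollary:tree:convergence<k-closure}, which is available because $H$ is unimodular (encoded in the superscript $0$) and cocompact, to conclude that the set
$$V_k = \{J \in \Sub(\Aut(T)) \mid \sigma J \sigma^{-1} \leq \closure{k}{H} \text{ for some } \sigma \in \Aut(T)_{v_0}^{[k]}\}$$
is a neighborhood of $H$ in $\Sub(\Aut(T))$. For any $J \in V_k$, the conjugate $\sigma J \sigma^{-1}$ is a closed subgroup of the locally pro-$\pi$ group $\closure{k}{H}$. Choosing an open pro-$\pi$ subgroup $U \leq \closure{k}{H}$, the intersection $\sigma J \sigma^{-1} \cap U$ is open in $\sigma J \sigma^{-1}$ and closed in $U$, hence pro-$\pi$; thus $\sigma J \sigma^{-1}$ is locally pro-$\pi$, and since conjugation is a topological isomorphism, so is $J$. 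In other words, every member of $V_k$ is locally pro-$\pi$.

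Finally, since $\Sub(\Aut(T))_{\leq C}^{0}$ is clopen in $\Sub(\Aut(T))$ by Corollary~\ref{corollary:tree:common-lattice2}, the intersection $V_k \cap \Sub(\Aut(T))_{\leq C}^{0}$ is a neighborhood of $H$ inside $\Sub(\Aut(T))_{\leq C}^{0}$ consisting entirely of locally pro-$\pi$ groups, establishing openness. The ``in particular'' statement follows by specializing to $\pi = \varnothing$: a pro-$\varnothing$ group is trivial, so a totally disconnected locally compact group is locally pro-$\varnothing$ exactly when it admits an open trivial subgroup, i.e.\ when it is discrete. This argument is largely a matter of assembling the cited results, the genuine content having been front-loaded into Proposition~\ref{proposition:locallyProPi} and Corollary~\ref{corollary:tree:convergence<k-closure}. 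The only points demanding a line of verification are that local pro-$\pi$-ness passes to closed subgroups and is conjugation-invariant, both of which are routine, so I anticipate no serious obstacle.
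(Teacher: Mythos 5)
Your proof is correct and follows essentially the same route as the paper: fix $k$ via Proposition~\ref{proposition:locallyProPi} so that $\closure{k}{H}$ is locally pro-$\pi$, then use Corollary~\ref{corollary:tree:convergence<k-closure} to see that a whole neighborhood of $H$ consists of groups conjugate into $\closure{k}{H}$, hence locally pro-$\pi$. Your extra verifications (closed subgroups of pro-$\pi$ groups are pro-$\pi$, conjugation-invariance, and the identification of locally pro-$\varnothing$ with discrete) are exactly the points the paper leaves implicit, and they are correct.
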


\begin{proof}
Let $H$ be a locally pro-$\pi$ group in $\Sub(\Aut(T))_{\leq C}^{0}$. By Proposition~\ref{proposition:locallyProPi}, there exists $K \geq 0$ such that $\closure{K}{H}$ is also locally pro-$\pi$. We also know from Corollary~\ref{corollary:tree:convergence<k-closure} that the set
$$\{J \in \Sub(\Aut(T)) \mid \sigma J \sigma^{-1} \leq \closure{K}{H} \text{ for some } \sigma \in \Aut(T) \}$$
is a neighborhood of $H$ in $\Sub(\Aut(T))$. This set only contains locally pro-$\pi$ groups, so the conclusion follows.
\end{proof}

\begin{remark}
We emphasize that the set of locally pro-$\pi$ groups in $\Sub(\Aut(T))_{\leq C}^{0}$ need not be closed in general. In order to see that, let $T$ be the $d$-regular tree, with $d \geq 3$. By \cite{Bass-Kulkarni}*{Theorem 7.1 (a)}, the group $\Aut(T)$ contains a properly ascending chain of cocompact lattices $\Gamma_1 < \Gamma_2 < \dots$. Denoting by $C$ the number of $\Gamma_1$-orbits of vertices, we have $\Gamma_i \in \Sub(\Aut(T))_{\leq C}^{0}$ for all $i$. Let $H = \overline{\bigcup_{i \geq 1} \Gamma_i}$. Since $\Gamma_1$ is a lattice in $H$, it follows that $H$ is unimodular, so that $H \in \Sub(\Aut(T))_{\leq C}^{0}$. If $H$ were discrete, it would be a cocompact lattice in $\Aut(T)$, and the chain of inclusions $\Gamma_1 \leq H \leq \Aut(T)$ would force the index $[H : \Gamma_1]$ to be finite, contradicting the properly ascending property of the chain $\Gamma_1 < \Gamma_2 < \dots$. We infer that $H$ is non-discrete. In particular $H$ is not locally pro-$\varnothing$. On the other hand, we have $H = \lim_{i \to \infty} \Gamma_i$ by Lemma~\ref{lemma:inter} (2). Hence we have constructed a converging sequence of locally pro-$\varnothing$ groups in $\Sub(\Aut(T))_{\leq C}^{0}$, whose limit is not locally pro-$\varnothing$. This confirms that the set of locally pro-$\pi$ groups in $\Sub(\Aut(T))_{\leq C}^{0}$ is not closed in general. 
\end{remark}

A totally disconnected locally compact group is called \textbf{locally torsion-free} if it has an open torsion-free subgroup. Typical examples are provided by $p$-adic analytic groups (see \cite{AnalyticPro}*{Theorems~4.5 and~8.1}). 

We shall need the following basic fact. 

\begin{lemma}\label{lemma:TorsionFreeProfinite}
Let $U$ be a torsion-free profinite group and $U = U_0 \geq U_1 \geq \dots $ be a descending chain of open subgroups of $U$ with trivial intersection. Let also $p$ be a positive integer.
For each $m \geq 0$, there exists $M$ such that for all $u \in U$, if $u^p \in U_M$ then $u \in U_m$. 
\end{lemma}

\begin{proof}
Suppose the contrary. Then there exist $m \geq 0$, a sequence of integers $(k_n)$ tending to infinity with $n$, and a sequence $(u_n)$ in $U$ such that $u_n^p \in U_{k_n}$ and $u_n \not \in U_m$. Upon extracting, we may assume without loss of generality that $(u_n)$ converges to some $u \in U$. Since $U_m$ is open and $u_n \not \in U_m$ for all $n$, we also have $u \not \in U_m$. In particular $u \neq 1$. On the other hand, we have $u_n^p \in U_{k_n}$, so that $u^p = (\lim_n u_n)^p = \lim_n u_n^p = 1$. Hence $u$ is a non-trivial torsion element of $U$, a contradiction. 
\end{proof}

\begin{proposition}\label{proposition:LocallyTorsionFree}
Let $T$ be a locally finite tree all of whose vertices have degree~$\geq 2$ and let $C > 0$. Then the set of locally torsion-free groups is open in $\Sub(\Aut(T))_{\leq C}^{0}$.
\end{proposition}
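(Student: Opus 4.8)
The plan is to mirror the proof of Proposition~\ref{proposition:LocalPrimeContent}, replacing the one-quotient criterion for pro-$\pi$-ness by the depth estimate of Lemma~\ref{lemma:TorsionFreeProfinite}. Let $H \in \Sub(\Aut(T))_{\leq C}^{0}$ be locally torsion-free. The whole proof reduces to the following \textbf{Key Claim}: for all sufficiently large $k$, the group $\closure{k}{H}$ is again locally torsion-free. Granting it, I finish exactly as in Proposition~\ref{proposition:LocalPrimeContent}: fix $K$ with $\closure{K}{H}$ locally torsion-free, apply Corollary~\ref{corollary:tree:convergence<k-closure} to obtain a neighborhood of $H$ consisting of groups $J$ admitting a conjugate $\sigma J \sigma^{-1} \leq \closure{K}{H}$, and observe that a closed subgroup (and any conjugate) of a locally torsion-free group is locally torsion-free; hence every such $J$ is locally torsion-free and the set is open.

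As a preliminary, I would use cocompactness exactly as in Proposition~\ref{proposition:locallyProPi} to pass to a uniform radius: there is $r \geq 0$ with $H_v^{[r]}$ torsion-free for every $v \in V(T)$. Indeed, an open torsion-free subgroup contains some $H_{v_0}^{[r_0]}$, which is therefore torsion-free; conjugating by elements sending a given vertex into a compact fundamental domain of diameter $D$, and using that subgroups and conjugates of torsion-free groups are torsion-free, yields the claim with $r = r_0 + D$. In particular, $H$ contains no nontrivial finite-order element fixing a ball of radius $r$.

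For the Key Claim I would show that $(\closure{k}{H})_v^{[r]}$ is torsion-free for all $v$ once $k$ is large. Suppose not: let $g \in \closure{k}{H}$ have prime order $p$, fix $B(v,r)$, and be nontrivial. Let $R \geq r$ be maximal with $g$ fixing $B(v,R)$ pointwise; then $g$ moves a vertex at distance $R+1$, whose neighbour $z$ at distance $R$ is fixed, and since $g^p = \id$ the element $g$ induces a $p$-cycle on the neighbours of $z$ (so $p$ is at most the degree of $z$). Choose $x$ on the geodesic from $v$ to $z$ with $d(x,z) = r$; then $B(x,r) \subseteq B(v,R)$, so $g$ fixes $B(x,r)$, while the cycled neighbours of $z$ lie at distance $r+1$ from $x$. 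Since $g \in \closure{k}{H}$, I can match $g$ on the large ball $B(x,k)$: there is $h \in H$ with $h|_{B(x,k)} = g|_{B(x,k)}$. Then $h$ fixes $B(x,r)$ but not $B(x,r+1)$, so $h \in H_x^{[r]} \setminus H_x^{[r+1]}$; and since $g,h$ both fix $x$ a routine induction gives $h^p = g^p = \id$ on $B(x,k-p+1)$, that is $h^p \in H_x^{[k-p+1]}$.

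The main obstacle is precisely the difference from the pro-$\pi$ case: here $h$ need not have finite order, so a single matching does not yet contradict torsion-freeness of $H_x^{[r]}$, and this is exactly what Lemma~\ref{lemma:TorsionFreeProfinite} is designed to repair. I apply it to the torsion-free profinite group $U = H_x^{[r]}$ with the filtration $U_m = H_x^{[r+m]}$ (whose intersection is trivial) and with $m = 1$: it produces $M$ such that $u^p \in U_M \Rightarrow u \in U_1 = H_x^{[r+1]}$. Taking $k \geq r + p - 1 + M$ forces $h^p \in H_x^{[k-p+1]} \subseteq U_M$, whence $h \in H_x^{[r+1]}$, contradicting $h \notin H_x^{[r+1]}$. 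The last delicate point is the uniformity of $k$: the constant $M$ depends on the pair $\bigl(H_x^{[r]},(H_x^{[r+m]})_{m}\bigr)$ and on $p$, but cocompactness leaves only finitely many $H$-orbits of vertices $x$, and the bounded vertex degrees leave only finitely many admissible primes $p$; taking the maximum of the resulting finitely many values of $M$ yields a single $k$ valid for every $x$ and every $g$. This establishes the Key Claim, and hence the proposition.
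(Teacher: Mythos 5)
Your proposal is correct and takes essentially the same approach as the paper's proof: a uniform torsion-free radius obtained from cocompactness, then Lemma~\ref{lemma:TorsionFreeProfinite} applied with $m=1$ to show that $\closure{k}{H}$ remains locally torsion-free for suitable $k$ (via the same prime-order element with a $p$-cycle at a well-chosen vertex, matched with an element $h \in H$ on a large ball), uniformity over the finitely many $H$-orbits of vertices and finitely many primes bounded by the vertex degrees, and finally Corollary~\ref{corollary:tree:convergence<k-closure}. The only deviations are organizational (the paper fixes the uniform constant $M$ up front in its Claim~1, whereas you collect uniformity at the end) and your slightly lossier but still sufficient estimate $h^p \in H_x^{[k-p+1]}$; since $g$ and $h$ both fix $x$, one in fact gets $h^p \in H_x^{[k]}$ directly, as in the paper.
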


\begin{proof}
We can suppose that $\Sub(\Aut(T))_{\leq C}^{0}$ is non-empty. It follows that $T$ is of bounded degree. We define the finite set of primes  
$\pi = \{ p \text{ prime } |\ p \leq \deg(v)\  \forall v \in V(T)\}$ 
and observe that the stabilizer $\Aut(T)_v$ of any vertex $v \in V(T)$ is a pro-$\pi$ group.

Let $H \in \Sub(\Aut(T))_{\leq C}^{0}$ be locally torsion-free. We must show that $H$ has a neighborhood in $\Sub(\Aut(T))_{\leq C}^{0}$ that consists of locally torsion-free groups. 

\setcounter{claim}{0}
\begin{claim}\label{cl1}
There exist integers $M > n > 0$ such that for all $v \in V(T)$, $p \in \pi$ and $h \in H_v^{[n]}$, if $h^p \in H_v^{[M]}$ then $h \in H_v^{[n+1]}$. 
\end{claim}

\begin{claimproof}
Since $H$ is locally torsion-free, there exists $n_0 \geq 0$ and $v_0 \in V(T)$ such that $H_{v_0}^{[n_0]}$ is torsion-free. As $H$ acts cocompactly on $T$, there exists $n \geq n_0$ such that $H_v^{[n]}$ is torsion-free for all $v \in V(T)$.

Let us now fix a prime $p \in \pi$ and a vertex $v \in V(T)$ and let us apply Lemma~\ref{lemma:TorsionFreeProfinite} to the torsion-free profinite groups $U_k = H_v^{[n+k]}$ and the integer $m = 1$. This yields a constant $M(p, v) \geq n$ such that for all $h \in H_v^{[n]}$, if $h^p \in H_v^{[M(p,v)]}$ then $h \in H_v^{[n+1]}$. 

We next define $M$ as the supremum of $M(p, v)$ taken over all $p \in \pi$ and all vertices $v$ in a (necessarily finite) fundamental domain for the $H$-action on $V(T)$. Then the required property holds (and we can assume that $M > n$). 
\end{claimproof}

\begin{claim}\label{cl2}
Let $M > n$ be the constants afforded by Claim~\ref{cl1} and let $G = \closure{M}{H}$. Then the group $G_v^{[n]}$ is torsion-free for all $v \in V(T)$. In particular $G$ is locally torsion-free. 
\end{claim}

\begin{claimproof}
Suppose for a contradiction that for some $v \in V(T)$, there exists a non-trivial torsion element in $G_v^{[n]}$. By the definition of $\pi$, every non-trivial torsion element of $\Aut(T)$ has a power which is a non-trivial element of order $p$ for some $p \in \pi$. We may thus assume that $G_v^{[n]}$ contains a non-trivial element $g$ of prime order $p \in \pi$. Let then $k \geq n$ be the largest integer such that $g \in G_v^{[k]}$. Then there exists a vertex $x \in B(v, k)$ fixed by $g$, such that $g$ does not fix $B(x, 1)$ pointwise. It follows that there exists a vertex $y$ on the geodesic segment joining $v$ to $x$ such that $g \in G_y^{[n]}$ and $g \not \in G_y^{[n+1]}$. 

Since $G = \closure{M}{H}$, there exists $h \in H$ such that $g|_{B(y, M)} = h|_{B(y, M)}$. The properties that $g \in G_y^{[n]}$, that $g^p = 1$ and that $g \not \in G_y^{[n+1]}$ respectively imply that $h \in H_y^{[n]}$, that $h^p \in H_y^{[M]}$ and that $h \not \in H_y^{[n+1]}$. This contradicts Claim~\ref{cl1}.
\end{claimproof}

\medskip

From Claim~\ref{cl2} we know that $\closure{M}{H}$ is locally torsion-free. Hence, the set
$$\{J \in \Sub(\Aut(T)) \mid \sigma J \sigma^{-1} \leq \closure{M}{H} \text{ for some } \sigma \in \Aut(T) \},$$
which is a neighborhood of $H$ in $\Sub(\Aut(T))$ by Corollary~\ref{corollary:tree:convergence<k-closure},
only contains locally torsion-free groups
\end{proof}

\begin{remark}
We emphasize that the set of locally torsion-free groups in $\Sub(\Aut(T))_{\leq C}^{0}$ need not be closed in general. An excellent illustration of that fact is provided by the main results of \cite{Stulemeijer}, showing that some simple algebraic groups over local fields of positive characteristic (which are not locally torsion-free) are Chabauty limits of simple algebraic groups over $p$-adic fields (which are $p$-adic analytic, hence locally torsion-free). 
\end{remark}

\section{Buildings}

\subsection{Weyl-transitive automorphism groups of buildings}

Let $\Delta$ be a locally finite thick building. 
A subgroup $H$ of $\Aut(\Delta)$ is said to be \textbf{Weyl-transitive} if, for all $w \in W$, the action of $H$ on the ordered pairs $(c_1,c_2)$ of chambers such that $\delta(c_1,c_2)=w$ is transitive, where $\delta \colon \Ch(\Delta)\times \Ch(\Delta) \to W$ is the Weyl-distance. 

\begin{remark}
If $H \leq \Aut(\Delta)$ is \textbf{strongly transitive} on $\Delta$ (i.e.\ transitive on pairs $(A, c)$ consisting of an apartment $A$ and a chamber $c \in A$), then it is Weyl-transitive. The converse holds if $\Delta$ is spherical, but not in general: see \cite{Abramenko}*{Proposition~6.14}. If $\Delta$ is of affine type (e.g. $\Delta$ is a tree) and $H$ is closed, it may be seen that if $H$ is Weyl-transitive, then it is strongly transitive on the spherical building at infinity of $\Delta$, hence strongly transitive on $\Delta$ by \cite{CaCi}*{Theorem~1.1}. For $\Delta$ arbitrary (e.g. hyperbolic), the existence of Weyl-transitive but non-strongly transitive closed subgroups 
$H \leq \Aut(\Delta)$ is likely, but currently we do not know explicit examples. 
\end{remark}

The following result, which is a straightforward adaptation of \cite{Caprace-Monod}*{Corollary~3.1} dealing with strongly transitive actions, shows that monolithic groups naturally appear in the context of Weyl-transitive automorphism groups of buildings. It may be seen as a generalization of Proposition~\ref{proposition:BuMo}.

\begin{proposition}\label{proposition:monolith}
Let $\Delta$ be an infinite irreducible locally finite thick building and $H \in \Sub(\Aut(\Delta))$ be Weyl-transitive. Then $\Mon(H)$ is topologically simple and transitive on the set of chambers of $\Delta$. In particular, $H/\Mon(H)$ is compact and $\Mon(H) = H^{(\infty)}$. 
\end{proposition}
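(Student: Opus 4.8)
The plan is to transport, essentially verbatim, the proof of \cite{Caprace-Monod}*{Corollary~3.1}, which establishes the same conclusions for \emph{strongly} transitive $H$, and to check that strong transitivity is nowhere used beyond what Weyl-transitivity already provides. I would begin with the routine reductions. As $\Delta$ is locally finite, the stabilizer $H_c$ of a chamber $c$ is a compact open subgroup of the totally disconnected locally compact group $H$, and since $\Delta$ is infinite, $H$ is non-compact, in particular infinite. Taking $w = 1_W$ in the definition shows that $H$ is already transitive on $\Ch(\Delta)$. It therefore suffices to prove the two substantive assertions, that $M := \Mon(H)$ is transitive on chambers and topologically simple; the remaining ``in particular'' clause is then formal. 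Indeed, chamber-transitivity of $M$ gives $H = M H_c$, so $H/M$ is a continuous image of the compact group $H_c$ and hence compact; combining the inclusion $M \leq H^{(\infty)}$ (valid since $H$ is infinite, by Lemma~\ref{lemma:normal}) with the reverse inclusion $H^{(\infty)} \leq M$ supplied by the lemma preceding Proposition~\ref{proposition:BuMo} (whose hypothesis, compactness of $H/M$, has just been verified) yields $M = H^{(\infty)}$.

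The core is the normal subgroup statement underlying \cite{Caprace-Monod}*{\S3}: every nontrivial closed normal subgroup $N \trianglelefteq H$ is transitive on $\Ch(\Delta)$. That such an $N$ fixes no chamber is immediate, for if $N$ fixed a chamber $c$ then for any $d = hc$ and any $n \in N$ one would have $nd = h(h^{-1}nh)c = hc = d$ by normality, so $N$ would fix every chamber and be trivial. Promoting ``fixes no chamber'' to ``chamber-transitive'' is the building-theoretic heart: the $N$-orbits on $\Ch(\Delta)$ form an $H$-invariant partition, and using irreducibility and thickness one shows that transitivity of $H$ on the pairs of chambers at each fixed Weyl-distance forces this partition to be trivial. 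Granting this, $M$ is nontrivial (the nontrivial closed normal subgroups are pairwise non-disjoint, the elementary seed being that the centralizer of a chamber-transitive subgroup meets every chamber stabilizer trivially by the computation above) and is chamber-transitive by the statement just proved; its topological simplicity is the output of Tits' simplicity argument as run in \cite{Caprace-Monod}, resting only on the Bruhat-type decomposition $H = \bigsqcup_{w \in W} H_c\, g_w\, H_c$ that Weyl-transitivity provides. This is the higher-rank counterpart of the tree simplicity statement \cite{Banks}*{Theorem~7.3} invoked in Proposition~\ref{proposition:BEW}.

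The step I expect to be the main obstacle is precisely the bookkeeping that this entire chain uses only Weyl-transitivity. In \cite{Caprace-Monod} the orbit analysis of $N$ and the simplicity argument are phrased through apartments and the Cartan-type decomposition coming from strong transitivity, whereas here I would rewrite them so that the sole transitivity invoked is that of $H$ on the pairs $(c_1,c_2)$ with $\delta(c_1,c_2)=w$ for $w \in W$, with no appeal to stabilizers of apartments. I note that when $\Delta$ is of affine type this bookkeeping can be bypassed, since a closed Weyl-transitive group is then automatically strongly transitive by \cite{CaCi}*{Theorem~1.1}; the genuine work is for general (e.g.\ hyperbolic) $\Delta$, where that reduction is unavailable. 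Finally, this argument specializes, in the rank-one case where $\Delta$ degenerates to a thick tree $T$ and Weyl-transitivity becomes $2$-transitivity on $\bd T$, to the statement of Proposition~\ref{proposition:BuMo}.
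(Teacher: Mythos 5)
Your skeleton coincides with the paper's: both proofs transport \cite{Caprace-Monod}*{Corollary~3.1} and reduce everything to checking that Weyl-transitivity suffices, and your treatment of the routine reductions and of the ``in particular'' clause (via $H = \Mon(H) H_c$, Lemma~\ref{lemma:normal}, and the lemma preceding Proposition~\ref{proposition:BuMo}) is exactly right. But your two open steps are not equally harmless. The first --- that every non-trivial normal subgroup of a Weyl-transitive group is transitive on $\Ch(\Delta)$ --- is fine and in fact needs no rewriting at all: this is Tits' transitivity lemma in its Weyl-transitive form, already proved as \cite{Abramenko}*{Lemma~6.61}, and citing it is precisely the ``supplementary check'' that constitutes the paper's proof. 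Your plan to re-derive it from the $H$-invariant partition into $N$-orbits is plausible but is left as a promissory note where a citation would do.

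The genuine gap is in your final step. First, non-triviality of $\Mon(H)$ does not follow from your ``elementary seed'': pairwise non-disjointness of the non-trivial closed normal subgroups does not prevent a descending chain of them from having trivial intersection, so it says nothing about the intersection of \emph{all} of them. Second, topological simplicity of the monolith is not the output of ``Tits' simplicity argument \ldots{} resting only on the Bruhat-type decomposition'': Tits' simplicity theorem requires a genuine $BN$-pair, i.e.\ strong transitivity, whereas Weyl-transitivity yields only the Bruhat decomposition --- this is exactly the distinction recorded in \cite{Abramenko}*{Proposition~6.14} and in the remark preceding the proposition, and for general (e.g.\ hyperbolic) $\Delta$ there is no reduction to the strongly transitive case, as you yourself note. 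What the paper actually invokes is the structure theory of \cite{Caprace-Monod}*{Theorem~E}: since $H$ is compactly generated (being chamber-transitive with compact open chamber stabilizers on a locally finite building) and every non-trivial closed normal subgroup is cocompact (by the transitivity lemma), the monolith is non-trivial, cocompact, and a quasi-product of finitely many topologically simple groups; one must then still exclude more than one quasi-factor, which the paper does using the locally compact CAT(0) metric realization of the irreducible building $\Delta$ --- a step entirely absent from your sketch. Replacing your last paragraph by this appeal to Theorem~E together with the one-factor argument closes the gap.
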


\begin{proof}
We follow the proof of \cite{Caprace-Monod}*{Corollary~3.1}. In generalizing from strongly transitive to Weyl-transitive actions, the point requiring a supplementary check is that Tits' transitivity lemma, which was originally stated for strongly transitive actions, holds more generally for Weyl-transitive action. This is indeed the case by \cite{Abramenko}*{Lemma~6.61}. We are thus ensured that any non-trivial normal subgroup of $H$ is transitive on the set of chambers of $\Delta$. Therefore any non-trivial closed normal subgroup of $H$ is cocompact. Since $H$ is Weyl-transitive on $\Delta$, it is chamber-transitive, hence compactly generated. We may then invoke \cite{Caprace-Monod}*{Theorem~E}, and conclude the proof word-by-word as in \cite{Caprace-Monod}*{Corollary~3.1}. The argument can be summarized as follows. We know from \cite{Caprace-Monod}*{Theorem~E} that the monolith of $H$ is a quasi-product of topologically simple groups. However, there can be only one simple factor using that the building $\Delta$ has locally compact CAT(0) metric realization. The desired assertions follow.
\end{proof}

The next corollary is then a direct consequence of Theorem~\ref{theorem:simpleclosed}.

\begin{corollary}\label{corollary:Building}
Let $\Delta$ be an infinite irreducible locally finite thick building and $\Gamma \leq \Aut(\Delta)$ act cocompactly on $\Delta$. Let $H_n \to H$ be a converging sequence in $\Sub({\Aut(\Delta)})$ whose limit $H$ is Weyl-transitive. Suppose that for each $n \geq 1$, there exists $\tau_n \in \Aut(\Delta)$ such that $\tau_n \Gamma \tau_n^{-1} \leq H_n$. Then we have
$$[H : \Mon(H)] \leq \limsup_{n\to \infty}\, [H_n : H_n^{(\infty)}].$$
In particular, if $H_n$ has no proper open subgroup of finite index for each $n \geq 1$ then $H$ is topologically simple.
\end{corollary}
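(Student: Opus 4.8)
The plan is to reduce this statement to Theorem~\ref{theorem:simpleclosed} together with Proposition~\ref{proposition:monolith}, in complete parallel to the way Corollary~\ref{corollary:2transitive} was deduced from Corollary~\ref{corollary:tree} and Proposition~\ref{proposition:BuMo}. The hypotheses have been arranged precisely so that Theorem~\ref{theorem:simpleclosed} applies verbatim: we have a cocompactly acting group $\Gamma \leq \Aut(\Delta)$, a converging sequence $H_n \to H$ in $\Sub(\Aut(\Delta))$, and for each $n$ a conjugate $\tau_n \Gamma \tau_n^{-1} \leq H_n$. The only point to verify before quoting these two results is that $\Aut(\Delta)$ is a second countable locally compact group and that $\Delta$ (or rather its chamber graph, or its CAT(0) realization) provides the locally finite connected graph to which the graph-theoretic machinery of \S\ref{section:BEW}--\S4 applies; since $\Delta$ is locally finite this is immediate, and $k$-closures and the Chabauty space are set up for exactly this level of generality.

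The argument itself runs as follows. First I would apply Theorem~\ref{theorem:simpleclosed} directly to the sequence $H_n \to H$: its hypotheses are met word-for-word, so it yields
$$[H : H^{(\infty)}] \leq \limsup_{n\to\infty}\, [H_n : H_n^{(\infty)}].$$
Next I would invoke Proposition~\ref{proposition:monolith}, whose hypotheses hold because $\Delta$ is an infinite irreducible locally finite thick building and $H$ is assumed Weyl-transitive. That proposition gives $\Mon(H) = H^{(\infty)}$. Substituting this equality into the inequality above produces exactly
$$[H : \Mon(H)] \leq \limsup_{n\to\infty}\, [H_n : H_n^{(\infty)}],$$
which is the displayed conclusion.

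For the final ``in particular'' clause, suppose each $H_n$ has no proper open subgroup of finite index, i.e.\ $H_n = H_n^{(\infty)}$ for every $n$. Then each index $[H_n : H_n^{(\infty)}] = 1$, so the right-hand side of the inequality equals $1$, forcing $[H : \Mon(H)] = 1$, that is, $H = \Mon(H)$. By the remark recalled just before Proposition~\ref{proposition:BuMo} (namely that $H$ is topologically simple if and only if $H = \Mon(H)$), we conclude that $H$ is topologically simple. I do not expect any serious obstacle here: the corollary is essentially a formal assembly of the two cited results, and the genuine content lives in Theorem~\ref{theorem:simpleclosed} (via the $k$-closure/Chabauty-convergence relation of Proposition~\ref{proposition:convergence_k}) and in the Burger--Mozes/Caprace--Monod monolith analysis underlying Proposition~\ref{proposition:monolith}. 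The only mild subtlety worth flagging is confirming that the chamber graph of $\Delta$ is the correct locally finite connected graph on which $\Gamma$ acts cocompactly and to which Theorem~\ref{theorem:simpleclosed} is applied, but this is guaranteed by local finiteness and Weyl-transitivity (hence chamber-transitivity) of $H$.
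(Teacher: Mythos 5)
Your proposal is correct and follows exactly the paper's own route: apply Theorem~\ref{theorem:simpleclosed} to the chamber graph of $\Delta$ (a locally finite connected graph whose vertices are the chambers and whose edges join adjacent chambers) to obtain $[H : H^{(\infty)}] \leq \limsup_n\, [H_n : H_n^{(\infty)}]$, then substitute $\Mon(H) = H^{(\infty)}$ from Proposition~\ref{proposition:monolith}. The chamber-graph identification you flag as the one mild subtlety is precisely the single observation the paper's proof makes explicit, so there is nothing to add.
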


\begin{proof}
This follows from Proposition~\ref{proposition:monolith} and Theorem~\ref{theorem:simpleclosed}, since a locally finite building can be seen as a locally finite connected graph whose vertices are the chambers and whose edges are the pairs of adjacent chambers.
\end{proof}

\begin{remark}
If $\Delta$ is a tree, then a closed Weyl-transitive subgroup of $\Aut(\Delta)$ is $2$-transitive on the set of ends $\bd \Delta$. Thus Corollary~\ref{corollary:2transitive} can be deduced from Corollary~\ref{corollary:Building}.
\end{remark}

\begin{remark}
If $\Delta$ is a locally finite Euclidean building of dimension~$\geq 2$, it can be seen that there is a unique topologically simple closed subgroup of $\Aut(\Delta)$ acting Weyl-transitively: namely the simple algebraic group to which $\Delta$ is associated via Bruhat--Tits theory. This is of course not the case for trees. For higher-dimensional more exotic buildings (e.g. Bourdon buildings), there can be a much larger collection of simple groups acting Weyl-transitively, whose variety might potentially be comparable to one encountered in the case of trees (see \cite{DMSiSt}). 
\end{remark}

\subsection{Buildings of virtually free type}

We have seen in \S\ref{subsection:trees} that, for trees, the condition about the common cocompact group $\Gamma$ was always fulfilled. It appears that, more generally, it is possible to drop the hypothesis about $\Gamma$ in the context of buildings whose associated Coxeter group is virtually free. The reason is the existence of a strong relation between such buildings and trees.

\begin{lemma}\label{lemma:virtuallyfree}
Let $\Delta$ be an infinite irreducible locally finite thick building whose Weyl group $W$ is virtually free. Suppose that $\Aut(\Delta)$ is chamber-transitive. Then there exists a locally finite tree $T$ on which $\Aut(\Delta)$ acts continuously, properly, faithfully and cocompactly.
\end{lemma}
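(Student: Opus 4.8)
The plan is to realise $\Aut(\Delta)$ as the fundamental group of a finite graph of compact open subgroups and to take for $T$ the associated Bass--Serre tree. First I would exploit the hypothesis that the Weyl group $W$ is virtually free. Being infinite and finitely generated, $W$ is then the fundamental group of a finite graph of finite groups (Karrass--Pietrowski--Solitar, Stallings), and by the theory of visual decompositions of Coxeter groups this decomposition can be chosen so that all vertex and edge groups are \emph{spherical standard parabolic} subgroups $W_{J}$, where $J$ ranges over a finite family of spherical subsets of $S$. Writing $\mathcal{G}$ for the underlying finite graph, with vertex types $J_v$ and edge types $J_e \subseteq J_v$, the group $W$ acts on the corresponding Bass--Serre tree $T_W$ with these special subgroups as stabilisers. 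Using that $\Delta$ is irreducible (so that the Coxeter diagram is connected and no generator splits off as a direct factor) and infinite, I would moreover arrange the decomposition so that $\bigcap_{J} J = \varnothing$, the intersection being taken over all occurring types; this \emph{separating} condition is what will yield faithfulness below.

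Next I would transport this decomposition to $G := \Aut(\Delta)$ through its Tits system $(B,N)$ of type $(W,S)$, where $B$ is the (compact open) stabiliser of a chamber. For a spherical subset $J$, the standard parabolic $P_J = B W_J B$ is the stabiliser of a spherical residue of type $J$, and the inclusions $W_{J_e} \leq W_{J_v}$ lift to inclusions $P_{J_e} \leq P_{J_v}$. The key step is to show that $G$ is the fundamental group of the graph of groups supported on $\mathcal{G}$ with vertex groups $P_{J_v}$ and edge groups $P_{J_e}$; equivalently, that the development $T$ of this graph of groups---whose vertices are the cosets $gP_{J_v}$, whose edges are the cosets $gP_{J_e}$, and whose incidence is given by inclusion of cosets---is a tree on which $G$ acts with quotient $\mathcal{G}$. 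This amalgamation of parabolics mirrors the amalgam structure of $W$ along its spherical special subgroups, and rests on the simple connectivity of the building together with the Tits-system relations; $T$ is then a tree by Bass--Serre theory.

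Granting that $T$ is a tree, the four required properties are verified as follows. The stabiliser of a vertex $gP_{J_v}$ is the conjugate parabolic $gP_{J_v}g^{-1}$; since $J_v$ is spherical and $\Delta$ is locally finite, the corresponding residue is finite, so $P_{J_v}$ is compact and open. Openness of all vertex and edge stabilisers gives \emph{continuity} of the action, their compactness gives \emph{properness}, and finiteness of the indices $[P_{J_v} : P_{J_e}]$ (each equal to the number of type-$J_e$ residues inside a type-$J_v$ residue) shows that $T$ is \emph{locally finite}. Chamber-transitivity of $G$ makes $G$ transitive on the residues of each fixed type, so $T$ has finitely many $G$-orbits of vertices and of edges; hence the action is \emph{cocompact}.

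For \emph{faithfulness}, let $K$ be the kernel of $G \to \Aut(T)$. If $K$ fixes every vertex of $T$, then for each chamber $c = gB$ and each occurring type $J$ one has $K \leq gP_Jg^{-1}$, whence $K \leq \bigcap_{J} gP_Jg^{-1} = g P_{\bigcap J} g^{-1} = gBg^{-1}$, using the standard Tits-system identity $\bigcap_{J} P_J = P_{\bigcap J}$ together with the separating condition $\bigcap_{J} J = \varnothing$. As this holds for every $g \in G$, the subgroup $K$ lies in the intersection of all chamber stabilisers, which is trivial because an automorphism of $\Delta$ fixing every chamber is the identity. Thus $K = 1$. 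The main obstacle in this scheme is the amalgamation step of the second paragraph, namely proving that the graph of parabolics really develops to a \emph{tree} with fundamental group $G$: this is where the virtual freeness of $W$, and the resulting fact that the building is globally modelled on the $W$-tree $T_W$, must be used in an essential way. The separating choice needed for faithfulness is a secondary, purely combinatorial point that relies on the irreducibility and infiniteness of $\Delta$.
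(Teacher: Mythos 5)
Your construction coincides with the paper's: decompose $W$ as a tree of groups with spherical special subgroups as vertex groups (the paper cites \cite{Davis}*{Proposition~8.8.5} for exactly this), transport the decomposition to $\Aut(\Delta)$ via residue stabilizers, and take the Bass--Serre tree; the verifications of continuity, properness, local finiteness and cocompactness are then the same as in the paper. Two points, however. First, a repairable misstep: under the lemma's hypothesis $\Aut(\Delta)$ is only chamber-transitive, so there is no Tits system, and the identities you invoke ($P_J = BW_JB$, $\bigcap_J P_J = P_{\bigcap J}$ as a ``Tits-system identity'') are not available. The amalgamation step you correctly single out as the main obstacle is precisely \cite{Tits}*{Proposition~2}, which is what the paper cites; it needs only chamber-transitivity and works with setwise residue stabilizers. (Your intersection identity survives in that setting, since for residues in any building one has $\mathcal{R}_J(c) \cap \mathcal{R}_K(c) = \mathcal{R}_{J \cap K}(c)$, so $\bigcap_J \mathrm{Stab}(\mathcal{R}_J(c)) \leq \mathrm{Stab}(c)$ whenever $\bigcap_J J = \varnothing$.)

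Second, a genuine gap: your ``separating condition'' $\bigcap_J J = \varnothing$ is asserted to be arrangeable from irreducibility and infiniteness, but no argument is given, and in the natural (reduced) reading it is false. Take $S = \{a,b,c\}$ with $m(a,b) = m(a,c) = 3$ and $m(b,c) = \infty$: this $W$ is irreducible, infinite and virtually free, with $W = W_{\{a,b\}} \ast_{\langle a \rangle} W_{\{a,c\}}$. It is freely indecomposable (in any free splitting the two finite factors $W_{\{a,b\}}, W_{\{a,c\}}$ fix vertices of the Bass--Serre tree, and since both contain $a$ and edge stabilizers are trivial they fix a \emph{common} vertex, forcing all of $W$ to fix it), so every tree-of-groups decomposition into spherical special subgroups has nontrivial edge groups; a vertex group of rank one then equals its incident edge group and can be collapsed, so after reduction the only possible vertex types are $\{a,b\}$ and $\{a,c\}$, and every occurring type contains $a$. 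Thus $\bigcap_J J = \{a\}$ in every reduced decomposition. (One can fake the condition by grafting a redundant vertex of type $\{b\}$ whose edge group equals its vertex group, but you neither prove achievability in general nor mention such a device.) The paper sidesteps all of this: the kernel $K$ of the action on $T$ stabilizes every residue of some fixed spherical type, hence each of its elements has displacement bounded by the finite diameter of such residues, and the Main Theorem of \cite{AB-unbounded} asserts that nontrivial automorphisms of infinite irreducible buildings have unbounded displacement; so $K$ is trivial with no condition whatsoever on the occurring types. Replacing your separating condition by this argument closes the gap.
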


\begin{proof}
By \cite{Davis}*{Proposition~8.8.5}, $W$ is virtually free if and only if $W$ has a tree of groups decomposition where each vertex group is a spherical special subgroup. If $\mathcal{X}$ is the tree of groups, then we write $X$ for the underlying tree and denote $W = \pi_1(\mathcal{X})$.

Since $\Aut(\Delta)$ is chamber-transitive, we have by~\cite{Tits}*{Proposition~2} that $\Aut(\Delta) = \pi_1(\mathcal{X}_0)$ where $\mathcal{X}_0$ has the same underlying tree $X$ as $\mathcal{X}$ and has adequate residue stabilizers as vertex groups and edge groups. By~\cite{Serre}*{\S I.4.5, Th\'eor\`eme 9}, we deduce that $\Aut(\Delta)$ acts on a locally finite tree $T$ in such a way  that $\Aut(\Delta) \backslash T = X$. Moreover, the stabilizer of a vertex of $T$ in $\Aut(\Delta)$ corresponds to a stabilizer of a spherical residue of $\Delta$ and hence is compact and open. This implies that the action of $\Aut(\Delta)$ on $T$ is continuous and proper. Finally, the kernel $K \leq \Aut(\Delta)$ of this action on $T$ stabilizes all residues of $\Delta$ of a fixed spherical type. Since $\Delta$ is infinite and irreducible, this implies that $K$ is trivial (see~\cite{AB-unbounded}*{Main Theorem}). The action is thus faithful.
\end{proof}

\begin{remark}
The tree of group decomposition of $W$ is generally not unique. In particular, the tree $T$ and the $\Aut(\Delta)$-action on $T$ afforded by Lemma~\ref{lemma:virtuallyfree} are not canonical. 
\end{remark}

\begin{corollary}\label{corollary:virtuallyfree}
Let $\Delta$ be an infinite irreducible locally finite thick building of virtually free type $W$. Let $H_n \to H$ be a converging sequence in $\Sub({\Aut(\Delta)})$ whose limit $H$ is Weyl-transitive. Then we have
$$[H : \Mon(H)] \leq \limsup_{n\to \infty}\, [H_n : H_n^{(\infty)}].$$

In particular, if $H_n$ has no proper open subgroup of finite index for each $n \geq 1$ then $H$ is topologically simple.
\end{corollary}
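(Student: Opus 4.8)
The plan is to transport the statement to the tree $T$ furnished by Lemma~\ref{lemma:virtuallyfree} and then invoke the tree machinery of \S\ref{subsection:trees}, in exact analogy with the way Corollary~\ref{corollary:2transitive} is deduced from Corollary~\ref{corollary:tree} and Proposition~\ref{proposition:BuMo}. First I would observe that since $H$ is Weyl-transitive it is in particular chamber-transitive, so $\Aut(\Delta) \geq H$ is chamber-transitive and Lemma~\ref{lemma:virtuallyfree} applies: there is a locally finite tree $T$ on which $\Aut(\Delta)$ acts continuously, properly, faithfully and cocompactly. Properness together with faithfulness identifies $\Aut(\Delta)$ with a \emph{closed} subgroup of $\Aut(T)$ (a continuous proper injective homomorphism between second countable locally compact groups is a closed embedding). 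Consequently $\Sub(\Aut(\Delta))$ embeds as the closed subspace $\{J \in \Sub(\Aut(T)) \mid J \leq \Aut(\Delta)\}$ of $\Sub(\Aut(T))$: this is immediate from the convergence criterion of Lemma~\ref{lemma:converge}, which refers only to elements and their limits and is insensitive to the ambient group once $\Aut(\Delta)$ is closed in $\Aut(T)$. In particular the given convergence $H_n \to H$ also holds in $\Sub(\Aut(T))$.

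Next I would verify that the limit $H$ satisfies the hypotheses of Corollary~\ref{corollary:tree}, namely that $H$ is unimodular and acts cocompactly on $T$. For cocompactness, recall from Lemma~\ref{lemma:virtuallyfree} that the vertices of $T$ correspond to the spherical residues of $\Delta$, of the finitely many types indexing the vertices of $\Aut(\Delta) \backslash T$, and that $\Aut(\Delta)$ acts with finitely many vertex-orbits. A chamber-transitive group is transitive on the residues of each fixed spherical type (map a chamber of one residue to a chamber of the other by chamber-transitivity; the image residue then coincides with the second); hence $H$ is transitive on each $\Aut(\Delta)$-orbit of vertices of $T$, so $H$ has the same finite set of vertex-orbits as $\Aut(\Delta)$ and acts cocompactly on $T$. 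For unimodularity, I would use Proposition~\ref{proposition:monolith}: $\Mon(H)$ is topologically simple and $H/\Mon(H)$ is compact. The restriction to $\Mon(H)$ of the modular function $m \colon H \to \R_{>0}$ of $H$ has kernel a closed normal subgroup of the topologically simple group $\Mon(H)$; since $\Mon(H)$ is non-abelian (a faithful transitive abelian action on the infinite chamber set would be simply transitive, making $\Mon(H)$ an infinite discrete simple abelian group, which is absurd), this kernel is all of $\Mon(H)$. Thus $m$ factors through the compact group $H/\Mon(H)$ and is therefore trivial, so $H$ is unimodular.

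With these hypotheses in place, Corollary~\ref{corollary:tree} applied to $H_n \to H$ in $\Sub(\Aut(T))$ yields
$$[H : H^{(\infty)}] \leq \limsup_{n \to \infty}\, [H_n : H_n^{(\infty)}].$$
Proposition~\ref{proposition:monolith} also gives $\Mon(H) = H^{(\infty)}$, so this is exactly the asserted inequality $[H : \Mon(H)] \leq \limsup_{n} [H_n : H_n^{(\infty)}]$. For the final clause, if each $H_n$ has no proper open subgroup of finite index then $H_n = H_n^{(\infty)}$, so every term $[H_n : H_n^{(\infty)}]$ equals $1$; the inequality then forces $[H : \Mon(H)] = 1$, i.e.\ $H = \Mon(H)$, which is topologically simple.

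The main obstacle, I expect, lies in the reduction of the first paragraph: one must be certain that the possibly non-canonical tree $T$ of Lemma~\ref{lemma:virtuallyfree} carries the data faithfully, i.e.\ that the action is proper and faithful enough to realize $\Aut(\Delta)$ as a genuinely closed subgroup of $\Aut(T)$, so that Chabauty convergence is neither created nor destroyed by the passage from $\Delta$ to $T$. The remaining point to appreciate is that Corollary~\ref{corollary:tree} constrains only the \emph{limit} $H$, through cocompactness and unimodularity, and not the groups $H_n$; it is precisely this feature, combined with the chamber-transitivity of $H$ forcing its cocompactness on $T$, that lets us dispense with the hypothesis on a common cocompact subgroup $\Gamma$ present in Corollary~\ref{corollary:Building}.
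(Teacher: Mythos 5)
Your proposal is correct and follows essentially the same route as the paper: transfer the situation to the tree $T$ of Lemma~\ref{lemma:virtuallyfree} via the closed embedding $\Aut(\Delta) \hookrightarrow \Aut(T)$, check that the limit $H$ acts cocompactly on $T$ and is unimodular, and conclude by Corollary~\ref{corollary:tree} together with the identity $\Mon(H) = H^{(\infty)}$ from Proposition~\ref{proposition:monolith}. The only divergence is the unimodularity step, where the paper simply observes that $H$ is generated by compact subgroups (which follows from Weyl-transitivity), whereas you derive it from the topological simplicity of $\Mon(H)$ together with the compactness of $H/\Mon(H)$ --- both arguments are valid, the paper's being slightly more direct.
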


\begin{proof}
Let $T$ be the locally finite tree given by Lemma~\ref{lemma:virtuallyfree}. The fact that $\Aut(\Delta)$ acts continuously, properly and faithfully on $T$ means that there is a map $i \colon \Aut(\Delta) \to \Aut(T)$ which is an isomorphism onto its image, the latter being closed in $\Aut(T)$. We thus have the converging sequence $i(H_n) \to i(H)$ in $\Sub({\Aut(T)})$, such that $i(H)$ acts cocompactly on $T$ and is unimodular (because it is generated by compact subgroups). The conclusion then follows from Corollary~\ref{corollary:tree}, since $\Mon(H) = H^{(\infty)}$ (see Proposition~\ref{proposition:monolith}).
\end{proof}

\appendix

\section{The clopen subset \texorpdfstring{$\mathcal{S}_T^{\Alt} \subseteq \mathcal{S}_T$}{S\_T}}\label{sec:appendix}

Let $T$ be a locally finite thick semi-regular tree. As before, we denote by $\mathcal{S}_T$ the set of isomorphism classes of groups in $\Sub(\Aut(T))$ which are topologically simple and $2$-transitive on $\bd T$. By Theorem~\ref{theorem:Bd-2-trans}, this set carries a compact Hausdorff topology induced from the Chabauty topology on $\Sub(\Aut(T))$. The goal of this appendix is to provide supplementary information on that compact space.

First recall that, when $H \in \Sub({\Aut(T)})$ is $2$-transitive on $\bd T$, the action of the stabilizer $H_v$ of a vertex $v \in V(T)$ in $H$ is $2$-transitive on the set of neighbors of $v$ (see \cite{BM}*{Lemma~3.1.1}). In particular, $H$ must be edge-transitive.

Recall also that $\mathcal{S}_T$ contains the isomorphism class of $\Aut(T)^+$ (which is simple by \cite{Tits_arbre}). In~\cite{Radu}, the second-named author restricted his attention to the groups $H$ which locally contain the full alternating group, i.e.\ such that the action of $H_v$ on its set of $d$ neighbors contains $\Alt(d)$ for each $v \in V(T)$. Let us denote by $\mathcal{S}_T^{\Alt}$ the subset of $\mathcal{S}_T$ consisting of the isomorphism classes of all those groups. An exhaustive description of the set $\mathcal{S}_T^{\Alt}$ when the vertices of $T$ have degree $\geq 6$ is given in \cite{Radu}. Below we summarize some of its properties.

\begin{proposition}\label{proposition:Cantor-Bendixson}
Let $T$ be the $(d_0, d_1)$-semi-regular tree. Then $\mathcal{S}_T^{\Alt}$ is a closed open subset of $\mathcal{S}_T$ containing the isomorphism class of $\Aut(T)^+$. 

Moreover, if $d_0, d_1 \geq 6$, then the compact space $\mathcal{S}_{T}^{\Alt}$ is countably infinite and its second Cantor--Bendixson derivative is $\{[\Aut(T)^+]\}$.
\end{proposition}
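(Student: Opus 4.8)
The plan is to establish the two halves of Proposition~\ref{proposition:Cantor-Bendixson} separately, relying on the structural results already in place. For the first assertion, I would show that $\mathcal{S}_T^{\Alt}$ is clopen in $\mathcal{S}_T$ by exhibiting it as the image under the quotient map $q \colon \mathcal{C} \to \mathcal{S}_T$ of a clopen, $\Aut(T)$-conjugation-invariant subset of $\mathcal{C}$. Concretely, the condition that $H_v$ acts on the neighbors of $v$ with an action containing $\Alt(d)$ is a \emph{local} condition: it is determined by the restrictions of elements of $H$ to balls of radius~$1$. By Lemma~\ref{lemma:converge}, if $H_n \to H$ in $\mathcal{C}$ then for large $n$ the groups $H_n$ and $H$ induce the same permutation groups on the neighbors of each vertex in a fundamental domain (the local action stabilizes along a convergent sequence, exactly as the set $S_H$ stabilizes in the proof of Claim~\ref{claim:VQ}). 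Since edge-transitivity forces the local action to be the same group (up to conjugacy) at all vertices of a given type, the property ``locally contains $\Alt(d)$'' is both open and closed in $\mathcal{C}$; it is also manifestly conjugation-invariant. As $q$ is open (shown in the proof of Theorem~\ref{theorem:Bd-2-trans}) and $\mathcal{S}_T^{\Alt}$ is the $q$-image of a saturated clopen set, it is clopen in $\mathcal{S}_T$. That $\Aut(T)^+$ is itself $2$-transitive on $\bd T$, topologically simple (by \cite{Tits_arbre}) and locally full-symmetric (hence locally contains $\Alt(d)$) places its class in $\mathcal{S}_T^{\Alt}$.

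For the second assertion, under $d_0,d_1 \geq 6$, I would import from \cite{Radu} the exhaustive description of the isomorphism classes in $\mathcal{S}_T^{\Alt}$: this is the point where the classification does the heavy lifting. That description yields a countably infinite list of classes, so $\mathcal{S}_T^{\Alt}$ is countably infinite; combined with compactness (inherited from Theorem~\ref{theorem:Bd-2-trans}, since $\mathcal{S}_T^{\Alt}$ is closed), this already shows the space is not perfect and must have isolated points. To compute the Cantor--Bendixson derivatives I would read off from the classification which classes are isolated and which are limits. The expected picture is that every class \emph{other} than $[\Aut(T)^+]$ is isolated in $\mathcal{S}_T^{\Alt}$ (so the first derivative $(\mathcal{S}_T^{\Alt})'$ is a set of accumulation points contained in $\{[\Aut(T)^+]\}$ together with possibly finitely many further classes), and then that within the first derivative only $[\Aut(T)^+]$ fails to be isolated, giving the second derivative $\{[\Aut(T)^+]\}$ as claimed. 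The key inputs are: that $[\Aut(T)^+]$ is a genuine accumulation point, realized as a Chabauty limit of pairwise non-isomorphic simple boundary-$2$-transitive groups (so it survives into every derivative), and that the classes accumulating \emph{onto} it are themselves isolated in $\mathcal{S}_T^{\Alt}$, so that they lie in the first derivative but not the second.

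The main obstacle will be the precise determination of the accumulation structure, i.e.\ verifying that $[\Aut(T)^+]$ is the unique non-isolated point of the first derivative. This cannot be settled by the soft Chabauty arguments used elsewhere in the paper; it requires the explicit list of groups from \cite{Radu} together with an understanding of which of them arise as limits of others. I would organize this by using the local-action invariant again: a Chabauty-convergent sequence forces convergence of the induced local permutation groups, so the limit's local action is constrained by the local actions along the sequence, and $\Aut(T)^+$ (locally the full symmetric group) sits at the top of this local-action hierarchy. Tracking how the finitely many local-action data and the $k$-closure parameters vary across the classification should pin down exactly which classes accumulate where. For full rigor I would cite the relevant structural statements of \cite{Radu} for the countability and the explicit enumeration, and then carry out the Cantor--Bendixson bookkeeping on that finite-to-countable combinatorial datum; the referenced Remark~\ref{remark:S^Alt} presumably records the concrete family witnessing that $[\Aut(T)^+]$ is a double limit point.
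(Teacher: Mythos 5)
Your handling of the first assertion is fine: the paper dismisses it as clear, and your argument (local actions on $1$-balls stabilize along Chabauty-convergent sequences by Lemma~\ref{lemma:converge}, the condition is conjugation-invariant, and $q$ is open) is a correct way to fill in the details. The genuine gap is in the second half, where your ``expected picture'' is internally inconsistent and incompatible with the very statement you are proving. If every class other than $[\Aut(T)^+]$ were isolated in $\mathcal{S}_T^{\Alt}$, then $(\mathcal{S}_T^{\Alt})'$ would be the singleton $\{[\Aut(T)^+]\}$ and the \emph{second} derivative would be empty, not $\{[\Aut(T)^+]\}$. Likewise your sentence claiming that the classes accumulating onto $[\Aut(T)^+]$ ``are themselves isolated in $\mathcal{S}_T^{\Alt}$, so that they lie in the first derivative'' is a contradiction in terms: the first derivative is precisely the set of non-isolated points. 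For the second derivative to be a singleton, the first derivative must be \emph{infinite}, made of points that are non-isolated in $\mathcal{S}_T^{\Alt}$ but isolated inside $(\mathcal{S}_T^{\Alt})'$. This is exactly the structure the paper establishes: writing $\mathcal{S}_T^{\Alt} = \{[G_{(i)}^+(Y_0,Y_1)] \mid Y_0, Y_1 \subset \N \text{ finite}\}$ via the classification of \cite{Radu}, the isolated points are the classes with $Y_0$ and $Y_1$ both non-empty, the first derivative is the countably infinite family $\{[G_{(i)}^+(Y,\varnothing)], [G_{(i)}^+(\varnothing,Y)]\}$, and only at the second step does one reach $\{[G_{(i)}^+(\varnothing,\varnothing)]\} = \{[\Aut(T)^+]\}$ (whence the homeomorphism of $\mathcal{S}_T^{\Alt}$ with $\hat{\Z}^2$ rather than $\hat{\Z}$). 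Also, being a mere accumulation point does not, as you assert, make $[\Aut(T)^+]$ ``survive into every derivative''; it must be an accumulation point \emph{of accumulation points}, which is what the boundary family provides.

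The second, related, gap is that the invariant you propose to run the bookkeeping on --- the local action on $1$-balls --- cannot do the job: it takes only finitely many values on $\mathcal{S}_T^{\Alt}$ (all these groups locally contain $\Alt(d_t)$), so no ``local-action hierarchy'' with $\Aut(T)^+$ at the top can detect which of the countably many classes accumulate where. The paper's computation instead rests on four specific properties of the groups $G_{(i)}^+(Y_0,Y_1)$: (a) the sandwich $G_{(i)}^+(X,Y) \leq G_{(i)}^+(\varnothing,Y) \leq \closure{(\max X)}{G_{(i)}^+(X,Y)}$, which yields $G_{(i)}^+(Y,\{n\}) \to G_{(i)}^+(Y,\varnothing)$ and hence non-isolation of the boundary classes; (b) each $G_{(i)}^+(Y_0,Y_1)$ equals its own $K$-closure for some finite $K$ (\cite{Radu}*{Theorem~H}), so that Corollary~\ref{corollary:tree:convergence<k-closure} upgrades Chabauty proximity into \emph{conjugate containment} in $G_{(i)}^+(Y_0,Y_1)$; (c) conjugate containment implies honest containment after possibly swapping the two parameters; and (d) the monotone invariants $K'_H(0), K'_H(1)$ of \cite{Radu}*{\S5.3}, equal to $\max Y_0$ and $\max Y_1$, which show that only finitely many $G_{(i)}^+(Z_0,Z_1)$ embed into a given $G_{(i)}^+(Y_0,Y_1)$ --- this finiteness is what forces isolation of the interior classes, and, applied a second time within $(\mathcal{S}_T^{\Alt})'$, isolation there of everything except $[\Aut(T)^+]$. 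You gesture at ``$k$-closure parameters,'' but without (b)--(d) nothing in your sketch rules out an infinite family of pairwise non-isomorphic classes accumulating at some $[G_{(i)}^+(Y_0,Y_1)]$ with both parameters non-empty, which would wreck the derivative computation. (A small further misreading: Remark~\ref{remark:S^Alt} concerns when $\mathcal{S}_T = \mathcal{S}_T^{\Alt}$, not the double-limit witness.)
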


\begin{proof}
The first assertion is clear. In order to prove the second one, we freely use the terminology and notation from \cite{Radu} without repeating all the definitions in full details. In that paper, a \textit{legal coloring} $i$ of $T$ is fixed and, given two possibly empty finite subsets $Y_0, Y_1 \subset \N$, a group $G_{(i)}^+(Y_0,Y_1)$ is defined. Let us describe their properties which will be needed here. We assume henceforth that $d_0, d_1 \geq 6$. The group $G_{(i)}^+(\varnothing, \varnothing)$ is exactly $\Aut(T)^+$, while $G_{(i)}^+(\{0\}, \{0\})$ is the semiregular analog of the universal locally alternating group of Burger--Mozes \cite{BM}. We call it $U_{(i)}^+(\Alt)$. For all $Y_0$ and $Y_1$ we have $U_{(i)}^+(\Alt) \leq G_{(i)}^+(Y_0, Y_1) \leq \Aut(T)^+$. The groups $G_{(i)}^+(Y_0, Y_1)$ locally contain the alternating group (since they contain $U_{(i)}^+(\Alt)$ which does), are boundary-$2$-transitive and abstractly simple (see \cite{Radu}*{Theorem~A (i), (ii)}) and, up to conjugation, these are the only such groups (see~\cite{Radu}*{Theorem~B (ii)}). If $[G]$ denotes the isomorphism class of $G \in \Sub(\Aut(T))$, then this means that
$$\mathcal{S}_T^{\Alt} = \{[G_{(i)}^+(Y_0, Y_1)] \mid Y_0, Y_1 \subset \N \text{ are finite}\}.$$
In fact, the groups $G_{(i)}^+(Y_0, Y_1)$ are not pairwise distinct, but this is not important for the following discussion. We now give some other properties of these groups. In order to shorten the statements, we adopt the convention $\max(\varnothing) := +\infty$.

\begin{fact}\label{fact:convergence}
If $(X^{(n)})$ is a sequence of finite subsets of $\N$ such that $\max X^{(n)} \to +\infty$ and if $Y$ is a finite subset of $\N$, then $G_{(i)}^+(X^{(n)}, Y) \to G_{(i)}^+(\varnothing, Y)$ and $G_{(i)}^+(Y, X^{(n)}) \to G_{(i)}^+(Y, \varnothing)$.
\end{fact}

\begin{factproof}
When $X$ is a non-empty finite subset of $\N$ and $Y$ is a finite subset of $\N$, we have $G_{(i)}^+(X, Y) \leq G_{(i)}^+(\varnothing, Y) \leq \closure{(\max X)}{G_{(i)}^+(X, Y)}$ (see \cite{Radu}*{\S4.1}). If $(X^{(n)})$ is a sequence such that $\max X^{(n)} \to +\infty$, then we can deduce that $G_{(i)}^+(X^{(n)}, Y) \to G_{(i)}^+(\varnothing, Y)$ with Lemma~\ref{lemma:converge}. Indeed, (i) is clear and (ii) can be obtained as follows. Fix $h \in G_{(i)}^+(\varnothing, Y)$ and $v_0 \in V(T)$. For each $n \geq 1$, since $G_{(i)}^+(\varnothing, Y) \leq \closure{(\max X^{(n)})}{G_{(i)}^+(X^{(n)}, Y)}$ (we can assume that $X^{(n)}$ is non-empty), there exists $h_n \in G_{(i)}^+(X^{(n)}, Y)$ such that $h|_{B(v_0, \max X^{(n)})} = h_n|_{B(v_0, \max X^{(n)})}$. Then $h_n \to h$ because $\max X^{(n)} \to +\infty$, which proves (ii). The reasoning is exactly the same to obtain that $G_{(i)}^+(Y, X^{(n)}) \to G_{(i)}^+(Y, \varnothing)$.
\end{factproof}

\begin{fact}\label{fact:inclusion}
If $\sigma G_{(i)}^+(Z_0, Z_1) \sigma^{-1} \leq G_{(i)}^+(Y_0, Y_1)$ for some finite subsets $Y_0, Y_1, Z_0, Z_1 \subset \N$ and some $\sigma \in \Aut(T)$, then either $G_{(i)}^+(Z_0, Z_1) \leq G_{(i)}^+(Y_0, Y_1)$ or $G_{(i)}^+(Z_1, Z_0) \leq G_{(i)}^+(Y_0, Y_1)$.
\end{fact}

\begin{factproof}
If $\sigma \in \Aut(T)^+$ then $G_{(i)}^+(Z_0, Z_1) \leq G_{(i)}^+(Y_0, Y_1)$ by \cite{Radu}*{Lemma~4.10 (i)}. If $\sigma \in \Aut(T) \setminus \Aut(T)^+$, then there exists a particular element $\nu \in \Aut(T) \setminus \Aut(T)^+$ such that $\nu G_{(i)}^+(Z_0, Z_1) \nu^{-1} = G_{(i)}^+(Z_1, Z_0)$ and the conclusion follows.
\end{factproof}

\begin{fact}\label{fact:inclusion2}
If $G_{(i)}^+(Z_0, Z_1) \leq G_{(i)}^+(Y_0, Y_1)$ for some finite subsets $Y_0, Y_1, Z_0, Z_1 \subset \N$, then $\max Z_0 \leq \max Y_0$ and $\max Z_1 \leq \max Y_1$.
\end{fact}

\begin{factproof}
In \cite{Radu}*{\S 5.3}, two invariants $K'_H(0), K'_H(1) \in \N \cup \{+\infty\}$ are associated to any closed subgroup $H \leq \Aut(T)$ containing $U_{(i)}^+(\Alt)$. These invariants have the property that if $H \leq H'$ then $K'_H(0) \leq K'_{H'}(0)$ and $K'_H(1) \leq K'_{H'}(1)$. For $H = G_{(i)}^+(Y_0,Y_1)$, we have $K'_H(0) = \max Y_0$ and $K'_H(1) = \max Y_1$ (see \cite{Radu}*{Table~1}), which suffices to conclude.
\end{factproof}

\begin{fact}\label{fact:closed}
For all finite subsets $Y_0, Y_1 \subset \N$, there exists an integer $K \geq 0$ such that $\closure{K}{G_{(i)}^+(Y_0,Y_1)} = G_{(i)}^+(Y_0,Y_1)$.
\end{fact}

\begin{factproof}
See~\cite{Radu}*{Theorem~H}.
\end{factproof}

\medskip

Let us now compute the Cantor-Bendixson derivatives of $\mathcal{S}_{T}^{\Alt}$. In $\mathcal{S}_{T}^{\Alt}$, the points $[G_{(i)}^+(Y, \varnothing)]$ and $[G_{(i)}^+(\varnothing, Y)]$ are not isolated. Indeed, $[G_{(i)}^+(Y, \{n\})] \to [G_{(i)}^+(Y, \varnothing)]$ when $n \to +\infty$ by Fact~\ref{fact:convergence}, and $[G_{(i)}^+(Y, \{n\})] \neq [G_{(i)}^+(Y, \varnothing)]$ for each $n \geq 0$ (Facts~\ref{fact:inclusion} and~\ref{fact:inclusion2}). We claim that the points $[G_{(i)}^+(Y_0, Y_1)]$ with $Y_0$ and $Y_1$ non-empty are isolated. Suppose for a contradiction that there exists sequences $(Y_0^{(n)})$, $(Y_1^{(n)})$ and $(\tau_n)$ with $\tau_n \in \Aut(T)$ such that $\tau_n G_{(i)}^+(Y_0^{(n)}, Y_1^{(n)}) \tau_n^{-1} \to G_{(i)}^+(Y_0, Y_1)$ and with $[G_{(i)}^+(Y_0^{(n)}, Y_1^{(n)})] \neq [G_{(i)}^+(Y_0, Y_1)]$ for each $n \geq 1$. Then, by Fact~\ref{fact:closed} and Corollary~\ref{corollary:tree:convergence<k-closure}, there exists $(\sigma_n)$ with $\sigma_n \in \Aut(T)$ such that $\sigma_n G_{(i)}^+(Y_0^{(n)}, Y_1^{(n)}) \sigma_n^{-1} \leq G_{(i)}^+(Y_0, Y_1)$ for sufficiently large $n$. From Fact~\ref{fact:inclusion}, we deduce that $G_{(i)}^+(Y_0^{(n)}, Y_1^{(n)}) \leq G_{(i)}^+(Y_0, Y_1)$ or $G_{(i)}^+(Y_1^{(n)}, Y_0^{(n)}) \leq G_{(i)}^+(Y_0, Y_1)$ for all sufficiently large $n$. But there are only finitely many $Z_0, Z_1 \subset \N$ such that $G_{(i)}^+(Z_0, Z_1) \leq G_{(i)}^+(Y_0, Y_1)$ (see Fact~\ref{fact:inclusion2}), so we cannot have the supposed convergence. We just proved that the first Cantor-Bendixson derivative of $\mathcal{S}_{T}^{\Alt}$ is
$$(\mathcal{S}_T^{\Alt})' = \{[G_{(i)}^+(Y, \varnothing)], [G_{(i)}^+(\varnothing, Y)] \mid Y \subset \N \text{ is finite}\}.$$
With the exact same reasoning, we then obtain that
\[(\mathcal{S}_T^{\Alt})'' = \{[G_{(i)}^+(\varnothing, \varnothing)]\} = \{[\Aut(T)^+]\}. \qedhere \]
\end{proof}

\begin{remark}
Proposition~\ref{proposition:Cantor-Bendixson} implies (see~\cite{MS20}*{Th\'eor\`eme 1}) that $\mathcal{S}_{T}^{\Alt}$ is homeomorphic to the space $\hat{\Z}^2$, where $\hat{\Z}$ is the one-point compactification of the discrete space $\Z$ (which is homeomorphic to the compact subset $\left \{1, \frac 1 2, \frac 1 3, \dots, 0\right \}$ of the real line). 
\end{remark}

\begin{remark}\label{remark:S^Alt}
If the degrees $d_0$ and $d_1$ of the vertices of $T$ are such that each $2$-transitive subgroup of $\Sym(d_t)$ contains $\Alt(d_t)$ (for each $t \in \{0,1\}$), then $\mathcal{S}_T = \mathcal{S}_T^{\Alt}$. For example, this is the case for $d_0 = d_1 = 3$. If in addition $d_0, d_1 \geq 6$, then the classification theorem from \cite{Radu} applies, and therefore yields a complete description of $\mathcal{S}_T$. One should note that the set of natural numbers $d$ such that each $2$-transitive subgroup of $\Sym(d)$ contains $\Alt(d)$ is asymptotically dense in $\N$ (see~\cite{Radu}*{Corollary~B.2}).
\end{remark}

\begin{remark}
It is actually a direct consequence of~\cite{Radu}*{Theorem~A (i), (ii) and Theorem~B (i)} that the space $\mathcal{S}_T^{\Alt}$ (and hence also $\mathcal{S}_T$) is infinite when $d_0, d_1 \geq 4$. The case where $d_0 = 3$ or $d_1 = 3$ is not explicitly dealt with in \cite{Radu}, but one can show that $\mathcal{S}_T^{\Alt}$ is infinite also in that case. Indeed, the definition of the groups $G_{(i)}^+(Y_0,Y_1)$ (where $Y_0, Y_1$ are finite subsets of $\N$) from loc.\ cit.\ makes sense for all $d_0,d_1 \geq 3$. For these groups to be boundary-$2$-transitive, one however needs to require $Y_0 \neq \{0\}$ (resp. $Y_1 \neq \{0\}$) when $d_0 = 3$ (resp. $d_1 = 3$). Under the latter hypothesis, it is then possible to adapt the ideas from \cite{Radu}*{\S4} and show that these groups are abstractly simple and that they represent infinitely many isomorphism classes. In the specific case of the trivalent tree $T_3$, the infiniteness of $\mathcal S_{T_3}$ can alternatively be established using rank one simple algebraic groups over local fields with residue field of order~$2$. An exhaustive description of the subset of $\mathcal S_{T_3}$ consisting of (isomorphism classes of) algebraic groups may be found in \cite{Stulemeijer}.
\end{remark}

\begin{bibdiv}
\begin{biblist}

\bib{Abramenko}{book}{
author = {Abramenko, Peter},
author = {Brown, Kenneth S.},
title = {Buildings : Theory and Applications},
series = {Grad. Texts in Math.},
volume = {248},
publisher = {Springer-Verlag},
year = {2008},
place = {New York}
}

\bib{AB-unbounded}{article}{
author = {Abramenko, Peter},
author = {Brown, Kenneth S.},
title = {Automorphisms of non-spherical buildings have unbounded displacement},
journal = {Innov. Incidence Geom.},
volume = {10},
year = {2010},
pages = {1--13}
}

\bib{Banks}{article}{
author = {Banks, Christopher C.},
author = {Elder, Murray},
author = {Willis, George A.},
title = {Simple groups of automorphisms of trees determined by their actions on finite subtrees},
journal = {J. Group Theory},
volume = {18},
number = {2},
year = {2014},
pages = {235--261}
}

\bib{Bass}{article}{
author = {Bass, Hyman},
title = {Covering theory for graphs of groups},
journal = {J. Pure Appl. Algebra},
volume = {89},
number = {1--2},
year = {1993},
pages = {3--47}
}

\bib{Bass-Kulkarni}{article}{
author = {Bass, Hyman},
author = {Kulkarni, Ravi},
title = {Uniform tree lattices},
journal = {J. Amer. Math. Soc.},
volume = {3},
number = {4},
year = {1990},
pages = {843--902}
}

\bib{BassLubotzky}{book}{
   author={Bass, Hyman},
   author={Lubotzky, Alexander},
   title={Tree lattices},
   series={Progress in Mathematics},
   volume={176},
   note={With appendices by Bass, L. Carbone, Lubotzky, G. Rosenberg and J.
   Tits},
   publisher={Birkh\"auser Boston, Inc., Boston, MA},
   date={2001},
}

\bib{BassTits}{book}{
author = {Bass, Hyman},
author = {Tits, Jacques},
title = {Discreteness criteria for tree automorphism groups},
note = {Appendix to the book \textit{Tree lattices}, by H. Bass and A. Lubotzky, Progress in Mathematics, vol. 176, Birkh\"auser Boston, 2001, 185--212}
}

\bib{Bourbaki}{book}{
author = {Bourbaki, N.},
title = {\'El\'ements de math\'ematique. Fascicule XXIX. Livre VI: Int\'egration. Chapitre 7: Mesure de Haar. Chapitre 8: Convolution et repr\'esentations},
language = {French},
series = {Actualit\'es Scientifiques et Industrielles, No. 1306},
publisher = {Hermann, Paris},
date = {1963},
pages = {222 pp. (2 inserts)}
}

\bib{BM}{article}{
author = {Burger, Marc},
author = {Mozes, Shahar},
title = {Groups acting on trees: from local to global structure},
journal = {Inst. Hautes \'Etudes Sci. Publ. Math.},
number = {92},
date = {2000},
pages = {113--150}
}

\bib{CaCi}{article}{
author = {Caprace, Pierre-Emmanuel},
author = {Ciobotaru, Corina},
title = {Gelfand pairs and strong transitivity for Euclidean buildings},
journal = {Ergodic Theory Dynam. Systems},
volume = {35},
number = {4},
year = {2015},
pages = {1056--1078}
}

\bib{Caprace-Monod}{article}{
author = {Caprace, Pierre-Emmanuel},
author = {Monod, Nicolas},
title = {Decomposing locally compact groups into simple pieces},
journal = {Math. Proc. Cambridge Philos. Soc.},
volume = {150},
year = {2011},
pages = {97--128}
}

\bib{Davis}{book}{
author = {Davis, Michael W.},
title = {The Geometry and Topology of Coxeter Groups},
series = {London Math. Soc. Monogr. Ser.},
volume = {32},
publisher = {Princeton University Press},
year = {2008},
place = {Princeton}
}

\bib{DMSiSt}{unpublished}{
author = {De Medts, Tom},
author = {Silva, Ana C.},
author = {Struyve, Koen},
title = {Universal groups for right-angled buildings},
note = {Preprint: \url{http://arxiv.org/abs/1603.04754}},
year = {2016},
}

\bib{AnalyticPro}{book}{
author = {Dixon, John D.},
author = {du Sautoy, Marcus P. F.},
author = {Mann, Avinoam},
author = {Segal, Dan},
title = {Analytic pro-$p$ groups},
series = {Cambridge Stud. Adv. Math.},
volume = {61},
edition = {2},
publisher = {Cambridge University Press, Cambridge},
date={1999},
}

\bib{Gao}{book}{
   author={Gao, Su},
   title={Invariant descriptive set theory},
   series={Pure and Applied Mathematics (Boca Raton)},
   volume={293},
   publisher={CRC Press, Boca Raton, FL},
   date={2009},
}

\bib{Gelander}{article}{
author = {Gelanger, Tsachik},
title = {A lecture on invariant random subgroups},
book = {
title={New directions in locally compact groups},
editor={Caprace, P.-E.},
editor={Monod, N.},
series = {London Math. Soc. Lecture Note Ser.},
volume = {447},
publisher = {Cambridge Univ. Press, Cambridge}
},
date = {2018},
pages = {186--204},
}

\bib{Guivarch}{article}{
author = {Guivarc'h, Yves},
author = {R\'emy, Bertrand},
title = {Group-theoretic compactification of Bruhat--Tits buildings},
journal = {Ann. Sci. \'Ec. Norm. Sup\'er.},
volume = {39},
number = {6},
year = {2006},
pages = {871--920}
}


\bib{Marquis}{unpublished}{
author = {Marquis, Timoth\'ee},
title = {Around the Lie correspondence for complete Kac--Moody groups and Gabber--Kac simplicity},
note = {Preprint: \url{http://arxiv.org/abs/1509.01976}},
year = {2015}
}

\bib{MS20}{article}{
author = {Mazurkiewicz, Stefan},
author = {Sierpi\'nski, Wac\l aw},
title = {Contribution \`a la topologie des ensembles d\'enombrables},
journal = {Fund. Math.},
language = {French},
volume = {1},
date = {1920},
number = {1},
pages = {17--27}
}

\bib{Mosher}{article}{
author = {Mosher, Lee},
author = {Sageev, Michah},
author = {Whyte, Kevin},
title = {Maximally symmetric trees},
note = {Dedicated to John Stallings on the occasion of his 65th birthday},
journal = {Geom. Dedicata},
volume = {92},
date = {2002},
pages = {195--233},
}

\bib{Radu}{article}{
author = {Radu, Nicolas},
title = {A classification theorem for boundary $2$-transitive automorphism groups of trees},
journal = {Invent. Math.},
volume = {209},
date = {2017},
number = {1},
pages = {1--60}
}

\bib{Serre}{book}{
author = {Serre, Jean-Pierre},
title = {Arbres, Amalgames, $\mathrm{SL}2$},
series = {Ast\'erisque},
language = {French},
volume = {46},
year = {1977}
}

\bib{Stulemeijer}{unpublished}{
author = {Stulemeijer, Thierry},
title = {Chabauty limits of algebraic groups acting on trees},
note = {Preprint: \url{http://arxiv.org/abs/1610.08454}},
date = {2016}
}

\bib{Tits_arbre}{article}{
author = {Tits, Jacques},
title = {Sur le groupe des automorphismes d'un arbre},
language = {French},
conference = {
	title = {Essays on topology and related topics (M\'emoires d\'edi\'es \`a Georges de Rham)}
},
book = {
	publisher={Springer, New York}
},
date = {1970},
pages = {188--211}
}

\bib{Tits}{article}{
author = {Tits, Jacques},
title = {Buildings and group amalgamations},
conference = {
	title = {Proceedings of groups---St.\ Andrews 1985}
},
book = {
	series = {London Math. Soc. Lecture Note Ser.},
	volume = {121},
	publisher = {Cambridge Univ. Press, Cambridge}
},
date = {1986},
pages = {110--127},
}
\end{biblist}
\end{bibdiv}
 
\end{document}